\title{An Immersed $C^0$ Interior Penalty Method for Biharmonic Interface Problems}
\author{Yuan Chen\thanks{Department of Mathematics, The Ohio State University, Columbus, OH 43210, USA. Email: chen.11050@buckeyemail.osu.edu},~~Xu Zhang\thanks{Department of Mathematics, Oklahoma State University, Stillwater, OK 74078, USA. Email: xzhang@okstate.edu. This author is partially supported by National Science Foundation grant DMS-2110833 }}
\date{}
\numberwithin{equation}{section}
\newtheorem{theorem}{Theorem}[section]
\newtheorem{lemma}{Lemma}[section]
\newtheorem{remark}{Remark}[section]
\newcommand{\vertiii}[1]{{\left\vert\kern-0.25ex\left\vert\kern-0.25ex\left\vert #1 
    \right\vert\kern-0.25ex\right\vert\kern-0.25ex\right\vert}}
\newcommand{\aver}[1]{\left\{\!\!\left\{#1\right\}\!\!\right\}}
\newcommand{\jump}[1]{\left[\!\left[#1\right]\!\right]}
\newcommand{\mynorm}[1]{\left\|#1\right\|}
\definecolor{red2}{RGB}{204,0,0}
\definecolor{blue2}{RGB}{0,103,165}
\setlist[description]{leftmargin=\parindent,labelindent=\parindent}
\newtheorem{case}{Case}
\newcommand{\YCrev}[1]{{\color{blue} #1}}
\begin{document}
\maketitle
\vspace{-13mm}
\begin{abstract}

In this paper, we introduce an immersed $C^0$ interior penalty method for solving two-dimensional biharmonic interface problems on unfitted meshes. To accommodate the biharmonic interface conditions, high-order immersed finite element (IFE) spaces are constructed in the least-squares sense. We establish key properties of these spaces including unisolvency and partition of unity, and verify their optimal approximation capability. These spaces are further incorporated into a modified $C^0$ interior penalty scheme with additional penalty terms on interface segments. The well-posedness of the discrete solution is proved. Numerical experiments with various interface geometries confirm optimal convergence of the proposed method in  $L^2$, $H^1$ and $H^2$ norms.\\

	\noindent\textit{AMS subject classification:} 35R05, 65N15, 65N30\\
	\textit{Keywords: immersed finite element, $C^0$ interior penalty method, biharmonic interface problems} 
\end{abstract}

\section{Introduction}

The biharmonic equation arises from plate bending theory in continuum mechanics. Its numerical solution poses significant challenges, particularly when using conforming finite element methods, since constructing $C^1$-conforming  elements \cite{1968ArgyrisFriedScharpf} is notoriously difficult and expensive. To address this difficulty, a wide range of alternative discretization techniques have been developed over the past several decades. 

One direction involves nonconforming finite element methods \cite{1968Morley}, which relax inter-element continuity requirements. However, these elements lack a systematic hierarchy to higher-order spaces. Another approach is mixed finite element methods \cite{1985ArnoldBrezzi, 1978Falk}, which reformulate the fourth-order PDE into a system of second-order equations. While effective in some cases, the mixed formulation introduces saddle-point structures and requires stable element pairs that satisfy the Ladyshenskaya–Babu$\check{\text{s}}$hka–Brezzi (LBB) condition \cite{brezzi1974existence}, which is often nontrivial to construct. In addition, discontinuous Galerkin (DG) methods \cite{cheng2008discontinuous, 2007MozolevskiSuliBosing}, weak Galerkin (WG) methods \cite{2014MuWangYe} are very flexible on approximation spaces and mesh types, but they usually have a large number of degrees of freedom, which makes these methods costly to use. 

A balanced alternative between $H^2$-conforming finite elements and fully discontinuous Galerkin methods is the $H^1$ semi-conforming approximation, known as the $C^0$ interior penalty method \cite{2005BrennerSung, engel2002continuous}. This method is particularly attractive because it uses only standard $C^0$ Lagrange elements while preserving the positive-definiteness of the resulting linear system and maintaining a natural hierarchy to higher order. Its flexibility has enabled extensions to a variety of PDEs, including optimal control problems \cite{brenner2019c0}, phase field crystal equations \cite{diegel2020c0}, and Hamilton–Jacobi–Bellman equations \cite{brenner2021adaptive}, among others. 

In this paper, we focus on biharmonic interface problems, governed by fourth-order elliptic equations with discontinuous coefficients across heterogeneous media. Such problems arise in composite plate bending, multiphase materials, and phase transition models. Let $\Omega\subset \mathbb{R}^2$ be an open bounded domain separated by a closed interface $\Gamma$ such that $\overline{\Omega} = \overline{\Omega^+\cup\Omega^-\cup\Gamma}$. Consider the two-dimensional biharmonic problems: 
\begin{subequations}\label{eq: bihar}
	\label{equ:interface}
	\begin{eqnarray}
	\Delta (\beta(\mathbf{x}) \Delta u)  = ~f,&&\text{in } \Omega/\Gamma, \label{equ:bihm}\\
		u ~ = ~ 0, &&\text{on } \partial\Omega, \label{equ:bc1}\\
		\partial_n u  = ~ 0, &&\text{on } \partial\Omega, \label{equ:bc2}
	\end{eqnarray}
where the Laplacian is $\Delta:=\frac{\partial^2}{\partial x_1^2} + \frac{\partial^2}{\partial x_2^2}$.  $n = (n_1,n_2)$ and $t = (t_1,t_2)$ denote the outward normal and tangential unit vectors, respectively, and $\beta$ is a piecewise constant coefficient that takes distinct values $\beta^\pm$ on $\Omega^\pm$. Across the interface $\Gamma$, the solution satisfies continuity conditions on the displacement and the normal derivative (see \eqref{equ:jump1}-\eqref{equ:jump2}), as well as two Neumann-type interface conditions (see \eqref{equ:jump3}–\eqref{equ:jump4}):
\begin{eqnarray}
\jump{u}_{\Gamma}  ~ = ~0,&&\text{on } \Gamma, \label{equ:jump1} \\
		\jump{\partial_n u}_{\Gamma}  ~ = ~0,&&\text{on } \Gamma, \label{equ:jump2}\\
		\jump{\beta\partial_{nn} u }_{\Gamma}  ~ = ~0,&&\text{on } \Gamma, \label{equ:jump3}\\
		\jump{\partial_n \left( \beta\Delta u+\beta\partial_{tt} u\right)}_{\Gamma}  ~ = ~0,&&\text{on } \Gamma, \label{equ:jump4}
\end{eqnarray}
\end{subequations}
where the jump operator $\jump{v}_{\Gamma}:=v^+|_{\Gamma}-v^-|_{\Gamma}$. 
Here and thereafter, we denote
\[
\partial_n v = \sum_{i=1}^2 n_i\partial_iv,~~~~
\partial_t v = \sum_{i=1}^2 t_i\partial_iv,~~~~
\partial_{nn} v = \sum_{i=1}^2\sum_{j=1}^2 n_in_j\partial_{ij}v,~~~~
\partial_{tn} v  =\sum_{i=1}^2\sum_{j=1}^2 t_in_j\partial_{ij}v,
\]
where $\partial_i = \frac{\partial}{\partial{x_i}}$, and $\partial_{ij} = \frac{\partial^2}{\partial{x_i}\partial{x_j}}$.  
For biharmonic interface problems, while the Dirichlet interface conditions \eqref{equ:jump1}-\eqref{equ:jump2} are universal, there are two types of  Neumann interface conditions in the literature. The first one, derived from Stokes formulation  {\cite{ciarlet2002finite}}, takes the form
\begin{equation}
\jump{\beta\Delta u}_\Gamma = 0,~~~~\jump{\partial_n(\beta\Delta u)}_\Gamma = 0. 
\end{equation}
The second type is derived from the energy functional of the free thin plate bending model  {\cite{gander2012chladni}}, which takes the form of \eqref{equ:jump3}-\eqref{equ:jump4}.  {These two types of interface conditions arise from distinct modeling considerations, corresponding to different choices of energy functionals \cite{gander2017definition}. In this work, we consider the \emph{second type} interface condition. Extending the proposed method to the first type is, in principle, straightforward. 
}

 {Several approaches have been developed for biharmonic interface problems. Within the immersed finite element framework, mixed formulations have been explored in \cite{MR4450434,lin2011immersed}, where piecewise linear IFE functions are employed to approximate the resulting saddle-point problem. Motivated by this approach, a finite difference method with augmented variable was proposed in \cite{li2017augmented}. An immersed Morley element was constructed in \cite{cai2021nitsche}, where a Nitsche-type extended finite element method is used to weakly enforce interface conditions. CutFEM approaches  \cite{2020BurmanHansboLarson, 2022BurmanHansboLarson,2025BurmanHansboLarsonZahedi} use $C^1$ finite element spaces with Nitsche’s method for weak enforcement of the interface conditions. A reconstructed discontinuous Galerkin method on unfitted meshes was introduced in \cite{2023ChenLiLiu}, where the approximation space is constructed via patch reconstruction and interface conditions are enforced weakly through Nitsche-type techniques. More recently, a nonconforming virtual element method on unfitted polygonal meshes was developed in \cite{2025MaChenWang}. In addition, mixed formulations have been extended to biharmonic interface problems posed on surfaces in \cite{2024ChenXiaoFeng}.}

 {As a numerical framework for PDE interface problems}, the main idea of  {IFE method} \cite{li2003new} is to solve interface problems on unfitted meshes by designing special shape functions with similar behavior to the exact solution around the interface. On the interface elements, piecewise polynomials are enforced to (weakly) satisfy jump conditions on interface.  {Early work on IFE method focused on second-order elliptic problems \cite{li2003new}, employing piecewise linear polynomial spaces with provable optimal approximation properties for the corresponding IFE spaces \cite{li2004immersed}. The method was subsequently extended to bilinear polynomial spaces on rectangular elements \cite{he2008approximation} and to problems with non-homogeneous interface jump conditions \cite{he2011immersed}. In \cite{lin2015partially}, the optimal error estimates for IFE method for elliptic problems were established by introducing penalty terms on interface edges which were later extended to other classes of problems. Over the past decade, the IFE method has been developed for elliptic problems \cite{guo2021solving, lin2015partially}, elasticity problems \cite{guo2019approximation, 2012LinZhang}, wave equations \cite{adjerid2023immersed,adjerid2020error,adjerid2019immersed}, Stokes equations \cite{2021ChenZhang, jones2021class}, and Navier-Stokes equations \cite{2024ChenZhang, 2022WangZhangZhuang}, to name only a few.}
In recent years, the IFE method has been extended to higher order for solving elliptic problems through weak enforcement \cite{adjerid2018higher}, least-squares construction \cite{adjerid2017high},  Cauchy extension \cite{adjerid2020enriched,guo2019higher,2026ZhuangZhangSarkisLin}, and Frenet transformation \cite{2024AdjeridLinMeghaichi,2025AdjeridLinMeghaichi}. The main obstacle of high-order IFE construction is that {polynomials cannot, in general, satisfy interface conditions exactly on arbitrary curves}. The least-squares approximation provides an efficient approach to weakly impose the interface jump conditions \cite{adjerid2017high, 2021ChenZhang, 2024ChenZhang}.

 In this paper, we introduce a modified $C^0$ interior penalty method with  {$\mathcal{P}_p$} immersed  finite elements for solving the biharmonic interface problem \eqref{eq: bihar}.  High-order  {$\mathcal{P}_p$} immersed finite element spaces are constructed through a recently developed least-squares approach \cite{adjerid2017high,2021ChenZhang}. We analyze key properties of the new IFE space, such as unisolvency and partition of unity, and verify their approximation capability. These spaces are incorporated into a modified $C^0$ interior penalty method, in which additional penalty terms are imposed along the interface curves for stabilization.  We prove the well-posedness of the numerical scheme and present extensive numerical examples to demonstrate the optimal convergence in $L^2$, semi-$H^1$ and semi-$H^2$ norms. 

The rest of the paper is organized as follows: Section \ref{sec:note} introduces notations and assumptions used throughout the text.  Section \ref{sec:space} develops the high-order immersed finite element spaces and establishes their fundamental properties. Section \ref{sec:scheme} presents the modified C$^0$ interior penalty scheme and its theoretical analysis on well-posedness. Section \ref{sec:experiment} reports numerical experiments to assess the performance of the method. Section \ref{sec:conclu} concludes with a brief summary.  

\section{Notations and Assumptions}
\label{sec:note}
In this section, we introduce some notations and assumptions to be used in this paper. Let $\tilde{\Omega}\subset \Omega$, we denote the standard Sobolev space $W^{k,p}(\tilde \Omega)$ with norm $\|\cdot\|_{W^{k,p}(\tilde \Omega)}$ and semi-norm $|\cdot|_{W^{k,p}(\tilde \Omega)}$ for $k\geq 1$ and $1 \leq p \leq \infty$. For $p=2$, we write $H^k(\tilde \Omega)$ with norm $\|\cdot\|_k$ and semi-norm $|\cdot|_k$. If $\tilde\Omega \cap \Gamma\neq \emptyset$, we set $\tilde\Omega^{s} = \tilde\Omega\cap \Omega^{s}$ ($s=+,-$) and define the broken Sobolev space
\begin{equation}
	PW^{k,p}(\tilde\Omega)=\{u:u|_{\tilde\Omega^s}\in W^{k,p}(\tilde\Omega^s),s=+,-;~ u \text{ satisfies } \eqref{equ:jump1}-\eqref{equ:jump4}\}.	
\end{equation}
For $p=2$, we denote $PW^{k,2}(\tilde\Omega)$ by $PH^{k}(\tilde\Omega)$. For matrix value functions $\mathbf{u}$ with $\mathbf{u}_{i,j}\in W^{k,p}(\tilde \Omega)$, we denote the corresponding Sobolev space $\mathbf{W}^{k,p}(\tilde \Omega)$ and define
\begin{equation}
	\|\mathbf{u}\|_{\mathbf{W}^{k,p}(\tilde \Omega)} = \sum_{i,j} \| \mathbf{u}_{ij} \|_{W^{k,p}(\tilde \Omega)}.
\end{equation}
Let $\mathcal{T}_h$ be a shape-regular \cite{brenner2008mathematical} triangular mesh on $\Omega$ with mesh size 
$h=\max_{T\in \mathcal{T}_h} \{h_T\}$ where  $h_T$ is the diameter of the element $T \in \mathcal{T}_h$. Define the set of interface elements $\mathcal{T}_h^i = \{T \in \mathcal{T}_h: T\cap \Gamma \neq \emptyset\}$, and the non-interface elements 
$\mathcal{T}_h^n = \mathcal{T}_h \backslash \mathcal{T}_h^i$. For $T \in \mathcal{T}_h^i$, we let $T^{\pm}=T \cap \Omega^{\pm}$, and $\Gamma_T=\Gamma \cap T$. 

Let $\mathcal{E}_h$ denote the set of all edges of the mesh $\mathcal{T}_h$, and $\mathring{\mathcal{E}_h}$ the interior edges. Denote $\mathcal{E}_h^i$ and $\mathcal{E}_h^n = \mathcal{E}_h \backslash \mathcal{E}_h^i$ the set of interface edges and non-interface edges, respectively. For each $e\in\mathcal{E}_h^i$, define $e^{\pm}=e\cap \Omega^{\pm}$. Let $\mathcal{F}_h^i$ be the set of edges belonging to interface elements. For any $e\in\mathring{\mathcal{E}_h}$
denote the adjacent elements by $T_e^1$ and $T_e^2$, and define their jump and average by
\begin{equation}
\jump{w}_{e}=w|_{T_{e}^1}-w|_{T_{e}^2},~~~\aver{w}_{e}=\frac{1}{2}\left(w|_{T_{e}^1}+w|_{T_{e}^2}\right).
\end{equation}
The unit normal $\mathbf{n}_e$ is oriented from $T_e^1$ to $T_e^2$, and the tangent vector $\mathbf{t}_e$ is taken counterclockwise along $e$. These notations are illustrated in Figure \ref{fig:fic}. 

We assume that the interface $\Gamma$ is $C^2$ smooth and satisfies the following assumptions:
\begin{enumerate}
	\renewcommand{\labelenumi}{\textbf{(H\theenumi)}}
	\item {For every edge $e$, the intersection $\Gamma \cap e$ consists of at most one isolated point unless $e \subset \Gamma$.}
	\item {For any element $K$, the interface $\Gamma$ intersects the boundary $\partial K$ at most two distinct edges of $K$.}
\end{enumerate}
\begin{remark}
 Vertex intersections are allowed. In particular, if $\Gamma$ intersects $\partial K$ at two vertices of an element $K$, then these intersection points are regarded as lying on two distinct edges of $K$.
\end{remark}

%

The smoothness of $\Gamma$ ensures the following lemma holds:
\begin{lemma}[r-tubular neighborhood]
\label{lemma:rt}
Let $\Gamma$ be a regular, simple, $\mathcal{C}^{2}$ curve. For each point $X \in \Gamma$, let $N_{x}(r)$ to be the line segment of length $2 r$ centered at $x$ and perpendicular to $\Gamma$. Then, there exists $r>0$ such that for any two points $X, Y \in \Gamma, X \neq Y$, the line segments $N_{X}(r)$ and $N_{Y}(r)$ are disjoint. We define the $r$-tubular neighborhood of $\Gamma$ by $U_{\Gamma}(r)=\cup_{X \in \Gamma} N_{X}(r)$.	
\end{lemma}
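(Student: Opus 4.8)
The plan is to derive the global disjointness statement from local injectivity of the normal map together with the compactness of $\Gamma$. Since $\Gamma$ is a regular, simple, closed $C^2$ curve contained in the bounded domain $\Omega$, it is compact and admits an arclength parametrization $\gamma:\mathbb{R}/L\mathbb{Z}\to\mathbb{R}^2$ that is $C^2$ and $L$-periodic; the unit tangent $\tau(s)=\gamma'(s)$ is then $C^1$, and so is the unit normal $\nu(s)$ obtained by rotating $\tau(s)$ through $\pi/2$. Define the normal map $\Phi(s,t)=\gamma(s)+t\,\nu(s)$ on $(\mathbb{R}/L\mathbb{Z})\times\mathbb{R}$, so that $N_{\gamma(s)}(r)=\{\Phi(s,t):|t|\le r\}$. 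One computes $\partial_t\Phi(s,t)=\nu(s)$ and $\partial_s\Phi(s,0)=\tau(s)$, which are orthonormal; hence $\det D\Phi(s,0)\neq 0$, and the inverse function theorem gives, for each $s$, an open set $V_s\ni(s,0)$ on which $\Phi$ is a $C^1$ diffeomorphism onto its image, in particular injective.

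I would then argue by contradiction. If no such $r>0$ existed, then for each $n$ there would be points $X_n\neq Y_n$ on $\Gamma$ with $N_{X_n}(1/n)\cap N_{Y_n}(1/n)\neq\emptyset$, say containing a common point $P_n=X_n+t_n\nu(X_n)=Y_n+t_n'\nu(Y_n)$ with $|t_n|,|t_n'|\le 1/n$. Then $|X_n-Y_n|\le|t_n|+|t_n'|\le 2/n\to 0$. By compactness of $\Gamma$, along a subsequence $X_n\to X^\ast\in\Gamma$, and consequently $Y_n\to X^\ast$ as well. Writing $X_n=\gamma(s_n)$ and $Y_n=\gamma(\sigma_n)$ with $s_n\neq\sigma_n$ and $s_n,\sigma_n\to s^\ast$ where $\gamma(s^\ast)=X^\ast$, the coincidence of $P_n$ reads $\Phi(s_n,t_n)=\Phi(\sigma_n,t_n')$ with $(s_n,t_n)\neq(\sigma_n,t_n')$ and both sequences converging to $(s^\ast,0)$. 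For $n$ large, both parameter pairs lie in $V_{s^\ast}$, contradicting the injectivity of $\Phi|_{V_{s^\ast}}$. This produces the desired $r$, and $U_\Gamma(r)$ is then well defined.

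The one place that needs care is upgrading local injectivity, which a priori holds only on an $s$-dependent neighborhood $V_s$, to a single uniform $r$ valid everywhere on $\Gamma$; compactness is precisely what bridges this gap, and once the two nearby parameter pairs are seen to converge to the common limit $(s^\ast,0)$ they automatically sit in the same chart $V_{s^\ast}$, which closes the argument without any Lebesgue-number bookkeeping. A minor point is the regularity count: only $\Gamma\in C^2$ is assumed, which gives $\nu\in C^1$ and $\Phi\in C^1$, exactly what the inverse function theorem requires. If one wanted a quantitative radius instead of mere existence, $r$ could be taken controlled by $\|\kappa\|_{L^\infty(\Gamma)}^{-1}$ together with a positive lower bound on the separation of non-neighboring arcs of $\Gamma$, but the qualitative statement above is all the lemma asserts.
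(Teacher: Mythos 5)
Your argument is correct and complete. Note that the paper does not actually prove this lemma: it is stated as a known consequence of the $C^{2}$ smoothness of $\Gamma$ (it is quoted essentially verbatim from \cite{guzman2017finite}), so there is no in-paper proof to compare against; your normal-map-plus-compactness argument is the standard one. The only step worth making fully explicit is that $X_n\to X^\ast$ and $Y_n\to X^\ast$ force $s_n,\sigma_n\to s^\ast$, which uses that $\gamma$ is a continuous bijection from the compact circle onto $\Gamma$ and hence a homeomorphism — this is where the simplicity of the curve enters, and it is precisely what lets you place both parameter pairs in the single chart $V_{s^\ast}$ and invoke local injectivity.
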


As in most cases, our analysis in this paper is conducted on the mesh $\mathcal{T}_h$ with sufficiently small $h$. To make it precise, inspired by \cite{guzman2017finite}, we denote the radius of the tubular neighborhood of the interface $\Gamma$ by $r$ and then restrict $h<r/2$.

\begin{figure}[htbp]
 \centering
 \includegraphics[width=.8\textwidth]{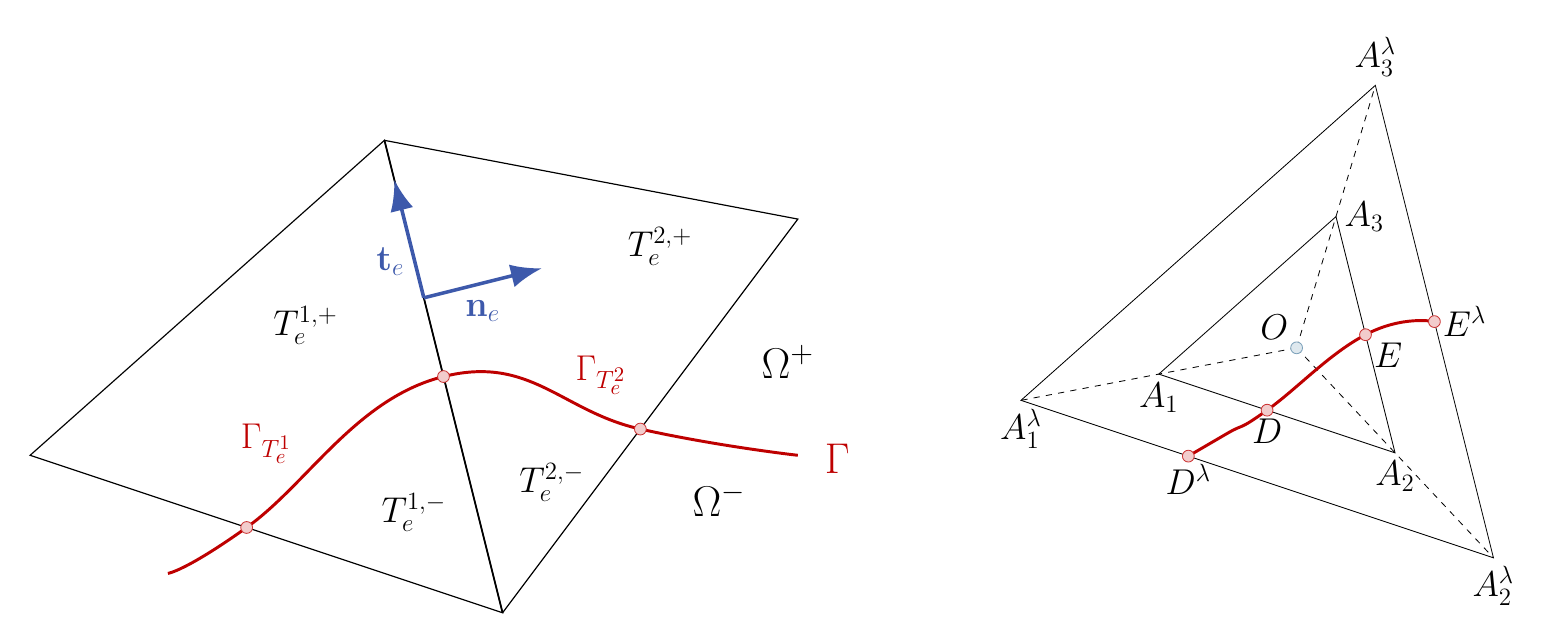}
 \caption{Notations of this article (Left) and fictitious element (Right)}
 \label{fig:fic}
\end{figure}

\section{High-Order Immersed FEM Space}
\label{sec:space}
In this section, we construct IFE spaces that approximately satisfy the biharmonic interface conditions \eqref{equ:jump1}-\eqref{equ:jump4}. 

For each triangular element $T \in \mathcal{T}_h$, let $\mathcal{P}_p(T)$ be the standard polynomial space of degree at most $p$ on $T$. Let  $\{A_i: i\in\mathcal{I}_p\}$ be the set of Lagrange nodes with set $\mathcal{N}_p(T) = \{A_i\}_{i\in\mathcal{I}_p}$ and $\mathcal{I}_p$ the index set. For example, when $p=2$, we have $|\mathcal{I}_p|=6$, with $A_1$, $A_2$, $A_3$ are vertices of $T$, and $A_4$, $A_5$, $A_6$ are midpoints of its edges.
\subsection{Least-Squares Biharmonic IFE Spaces}
On each non-interface element $T \in \mathcal{T}_h^n$, the local IFE space on $T$ is identical to the standard Lagrange finite element space, i.e. $\mathcal{P}_p(T)$. Let $\psi^p_{i,T} \in \mathcal{P}_p (T)$ be the Lagrange shape functions such that 
\[
\psi^p_{i,T}(A_j)=\delta_{ij},~~~\forall~  i,j\in\mathcal{I}_p,
\]
where $\delta_{ij}$ is Kronecker delta function. In this case, the local IFE space on $T$ is given by 
\[S_h^{p}(T) := \text{span}\{ \psi^p_{i,T}: i\in \mathcal{I}_p\}, ~ \text{for }T \in \mathcal{T}_h^n.\]

On each interface element $T\in\mathcal{T}_h^i$,  the interface conditions are enforced in a least-squares sense. We denote $\mathcal{I}_p^{\pm}=\{i \in \mathcal{I}_p: A_i\in T^{\pm}\}$ the index set of Lagrange nodes located in $T^\pm$. Moreover, we define the tensor polynomial space $\mathcal{S}_p(T)=\mathcal{P}_p(T) \times \mathcal{P}_p(T)$ and the piecewise polynomial space
\begin{equation}
	\mathcal{H}_p(T)=\{ v\in L^2(T): v|_{T^+} \in \mathcal{P}_p(T^+)\text{, }v|_{T^-} \in \mathcal{P}_p(T^-)\}.
\end{equation}

The spaces $\mathcal{S}_p(T)$ and $\mathcal{H}_p(T)$ are isomorphic, i.e.,  there exists a natural isomorphism $\mathcal{M}_T:\mathcal{S}_p(T)\to \mathcal{H}_p(T)$ given by
\begin{equation}
	\mathcal{M}_T(u,v)=\left\{\begin{array}{l}
	u, \text { on } T^{+} \\
	v, \text { on } T^{-}, 
	\end{array}\right.  \forall (u,v)\in \mathcal{S}_p(T).
\end{equation}

The local IFE space on the interface element is constructed as a subspace of $\mathcal{H}_p(T)$. For ease of exposition, we construct the IFE space as a subspace of $\mathcal{S}_p(T)$ and then map into $\mathcal{H}_p(T)$ through $\mathcal{M}_T$. It is straightforward to verify that the following functions form a basis of $\mathcal{S}_p(T)$:
\begin{equation}
	\xi^p_{i, T}=\left\{\begin{array}{l}
	\left(\psi^p_{i, T}, 0\right), \text { if } i \in \mathcal{I}_p^{+} \\
	\left(0, \psi^p_{i, T}\right), \text { if } i \in \mathcal{I}_p^{-},
	\end{array} \quad \eta^p_{i, T}=\left\{\begin{array}{l}
	\left(0, \psi^p_{i, T}\right), \text { if } i \in \mathcal{I}_p^{+} \\
	\left(\psi^p_{i, T}, 0\right), \text { if } i \in \mathcal{I}_p^{-}.
	\end{array}\right.\right.
\end{equation}
We may write $\mathcal{S}_p = \mathcal{S}_{p,1} \oplus \mathcal{S}_{p,2}$ where $\mathcal{S}_{p,1} = \text{span}\{\xi^p_{i,T},i\in\mathcal{I}_p\}$ and $\mathcal{S}_{p,2} = \text{span}\{\eta^p_{i,T},i\in\mathcal{I}_p\}$. To enforce interface conditions, we define bilinear form $\mathcal{J}^p_{\lambda}(\cdot,\cdot): \mathcal{S}_p(T)\times \mathcal{S}_p(T)\mapsto \mathbb{R}^{+} \cup \{0\}$, which measures the violation of the interface conditions:
%
\begin{equation}
	\label{equ:bilinearform2}
	\begin{split}
		\mathcal{J}^p_\lambda(u,v)=&\omega_0\int_{\Gamma_T^\lambda}\jump{u}\jump{v}ds+\omega_1h^2\int_{\Gamma_T^\lambda}\jump{\partial_n u}\jump{\partial_n v}ds+\omega_2h^4\int_{\Gamma_T^\lambda}\jump{\beta \partial_{nn} u}\jump{\beta  \partial_{nn} v}ds\\
		&+\omega_3h^6\int_{\Gamma_T^\lambda}\jump{\beta( \partial_n\Delta u+\partial_{ntt}u )}\jump{\beta ( \partial_n\Delta v+{\partial_{ntt}  v} )}ds.
	\end{split}	
\end{equation}
Here we adopted the idea of fictitious element introduced in \cite{zhuang2019high}, which constructs local IFE spaces on an enlarged element to alleviate ill-conditioning caused by extremely small-cut subelements. Specifically, for each $T\in \mathcal{T}_h^i$, we define the fictitious element 
\[T_{\lambda}=\{ X\in \mathbb{R}^2:\exists Y\in T\text{ s.t. }\overrightarrow{O X}=\lambda \overrightarrow{O Y} \},\] 
where $O$ is the barycenter of $T$. The extended interface segment is then $\Gamma_T^{\lambda}=T_{\lambda}\cap \Gamma$. 
{The scaling with respect to $h$ in \eqref{equ:bilinearform2} reflects the order of derivatives in each interface condition and ensures proper balance in the least-squares functional.}
An illustration of the fictitious element is shown on the right hand side of Figure \ref{fig:fic}.

The bilinear functional $\mathcal{J}^p_{\lambda}(\cdot,\cdot)$ defines a semi-norm on $\mathcal{S}_p$, denoted by $|\cdot|_{\mathcal{J}^p_{\lambda}}$. In fact, for $p=2$ or $3$, it induces a norm, as will be shown later. 

Given nodal values $\mathbf{v}=(v_i)_{i\in \mathcal{I}_p}$, the associated  $\mathcal{S}_p$ IFE function on $T$ takes the form
\begin{equation}\label{eq: IFE nodal}
	\varphi_{T}^p|_{\mathbf{v}} =\sum_{i\in \mathcal{I}_p} v_i \xi^p_{i,T}+\sum_{i\in \mathcal{I}_p}	c_i \eta^p_{i,T},
\end{equation}
where  {we use ``$|_{\mathbf{v}}$'' to denote the shape function given nodal value $\mathbf{v}$ and} the unknown coefficient vector $\mathbf{c}=(c_i)_{i\in \mathcal{I}_p}$ is determined by minimizing the violation of interface conditions $|\cdot|_{\mathcal{J}_\lambda^p}$. The precise statement is in Theorem \ref{thm:lsq}. We test \eqref{eq: IFE nodal} against $\eta^p_{i,T}$ to enforce $\mathcal{J}^p_\lambda(\varphi_{T}^p|_{\mathbf{v}},\eta^p_{i,T})=0$, $i=1,2,...,|\mathcal{I}_p|$ leading to the following linear system:
\begin{equation}
	\mathbf{A}^{p,\lambda} \mathbf{c}=-\mathbf{B}^{p,\lambda} \mathbf{v},
\end{equation}
with
\begin{equation}
	\mathbf{A}^{p,\lambda}_{i j}=\left(\mathcal{J}^p_{\lambda}\left(\eta_{j, T}, \eta_{i, T}\right)\right)_{i, j \in \mathcal{I}_p} \in \mathbb{R}^{|\mathcal{I}_p| \times|\mathcal{I}_p|}, \quad \mathbf{B}^{p,\lambda}_{i j}=\left(\mathcal{J}^p_{\lambda}\left(\xi_{j, T}, \eta_{i, T}\right)\right)_{i, j \in \mathcal{I}_p} \in \mathbb{R}^{|\mathcal{I}_p| \times|\mathcal{I}_p|}.		
\end{equation}
The IFE functions of space $\mathcal{H}_p$ are then constructed by mapping $\varphi^p_{T}|_{\mathbf{v}}$ back via $\mathcal{M}_T$. In summary, the IFE basis functions on $T \in \mathcal{T}_h^i$ is defined by:
\begin{equation}
	\phi^p_{i,T} := \mathcal{M}_T\varphi^p_{T}|_{\mathbf{e}_i},~~~~i\in \mathcal{I}_p,
\end{equation}
where $\mathbf{e}_i$ is the canonical basis vector in $\mathbb{R}^{|\mathcal{I}_p|}$.  It is straightforward to verify $\phi^p_{i,T}(A_j)=\delta_{ij}$ for $i,j \in \mathcal{I}_p$. 
Examples of $\mathcal{P}_2$ and $\mathcal{P}_3$ IFE basis functions are shown in Figure \ref{fig:basis}. In the first row, the third, fifth and sixth $\mathcal{P}_2$ IFE basis functions are presented, while the second row lists the third, fifth and tenth $\mathcal{P}_3$ basis functions. It follows that the pointwise continuity of basis functions along the interface $\Gamma_T$ cannot be guaranteed. In fact, enforcing two distinct polynomials to coincide pointwise on nontrivial curve is generally impossible. Instead, the IFE basis functions are weakly continuous across the interface.

\begin{figure*}[!htbp]
\centering
\subfigure{
    \begin{minipage}[t]{0.33\linewidth}
        \centering
        \includegraphics[width=\textwidth]{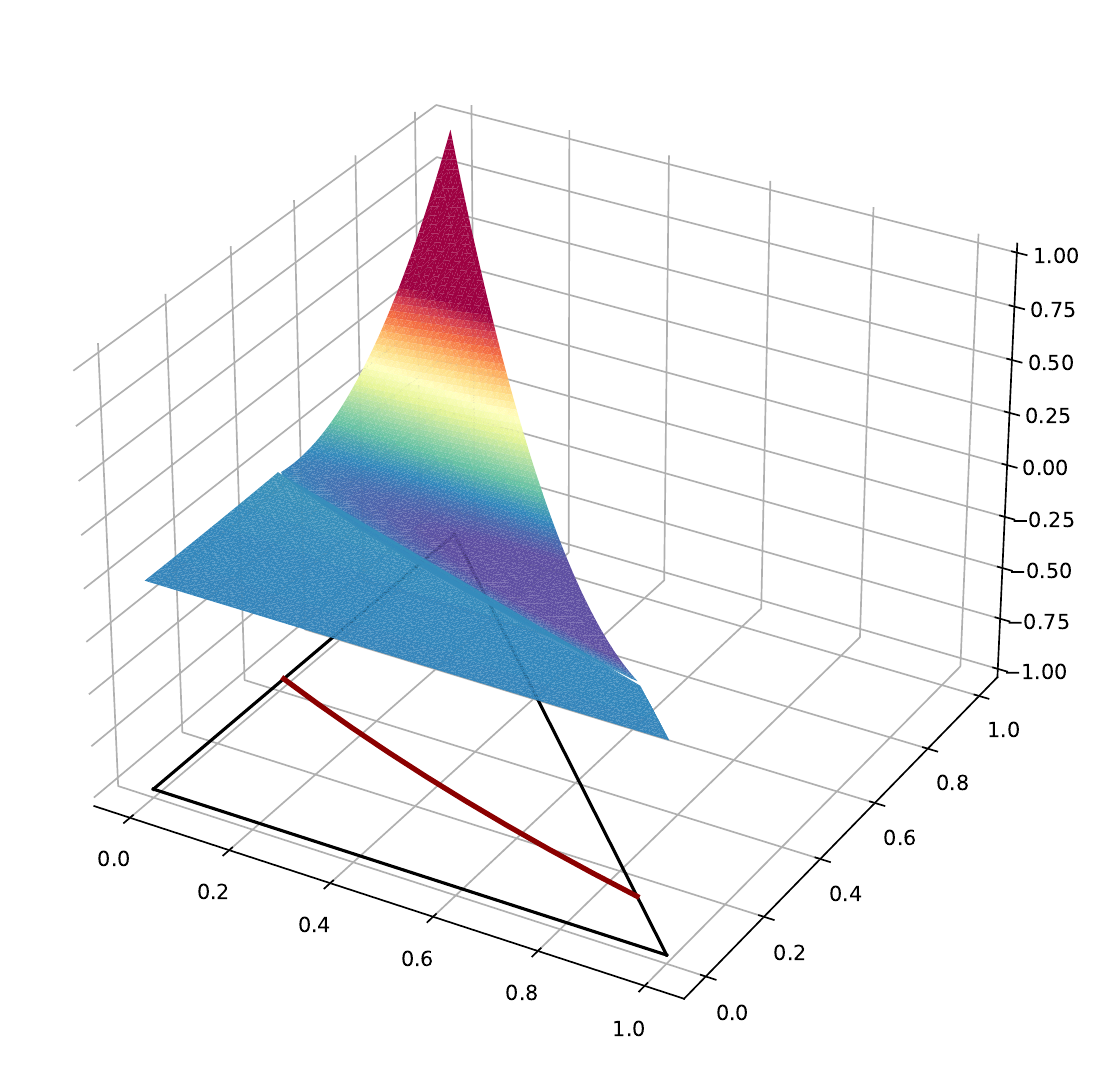}\\
        \vspace{0.02cm}
        \includegraphics[width=\textwidth]{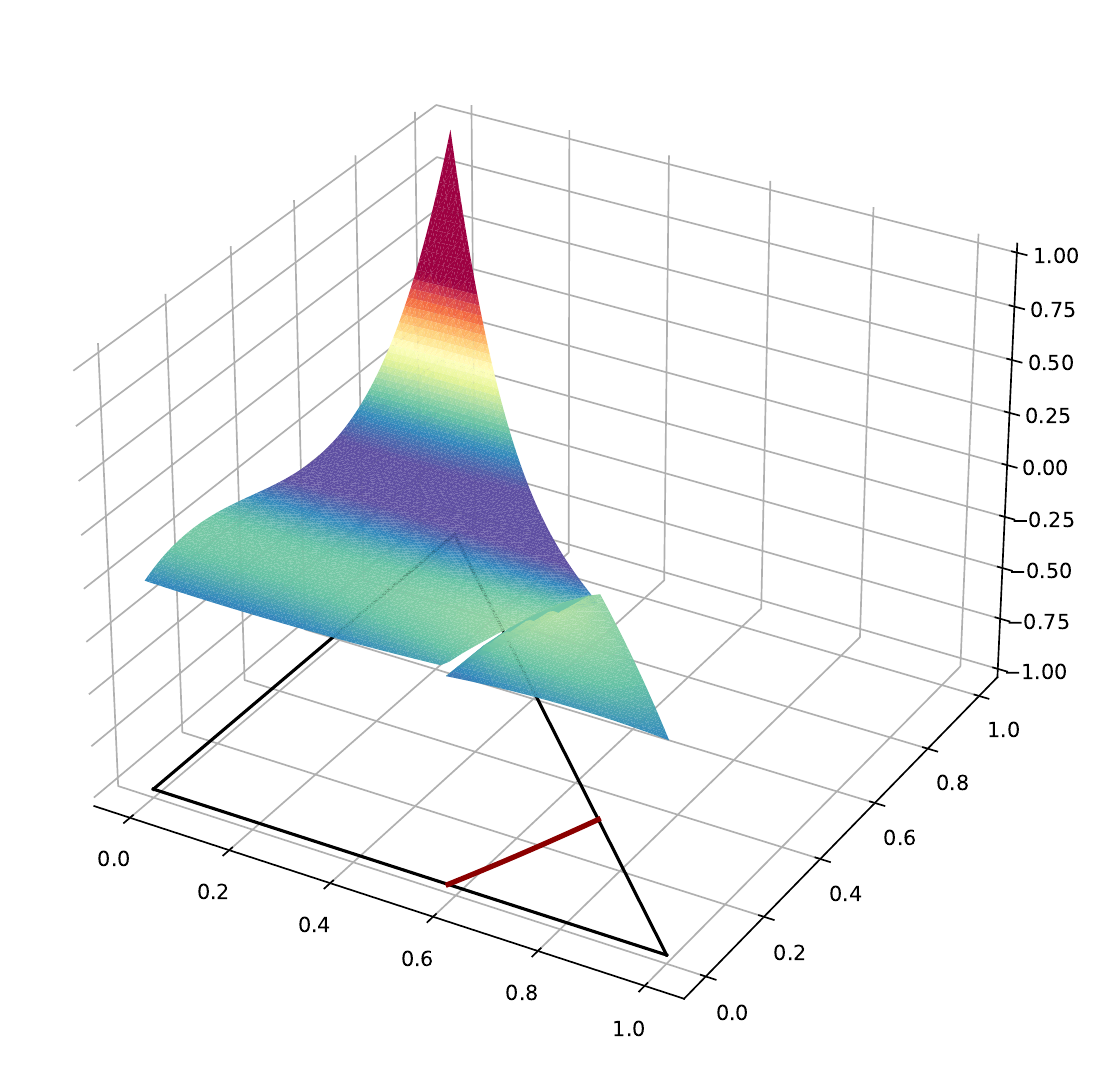}\\
        \vspace{0.02cm}
    \end{minipage}%
}%
\subfigure{
    \begin{minipage}[t]{0.33\linewidth}
        \centering
        \includegraphics[width=\textwidth]{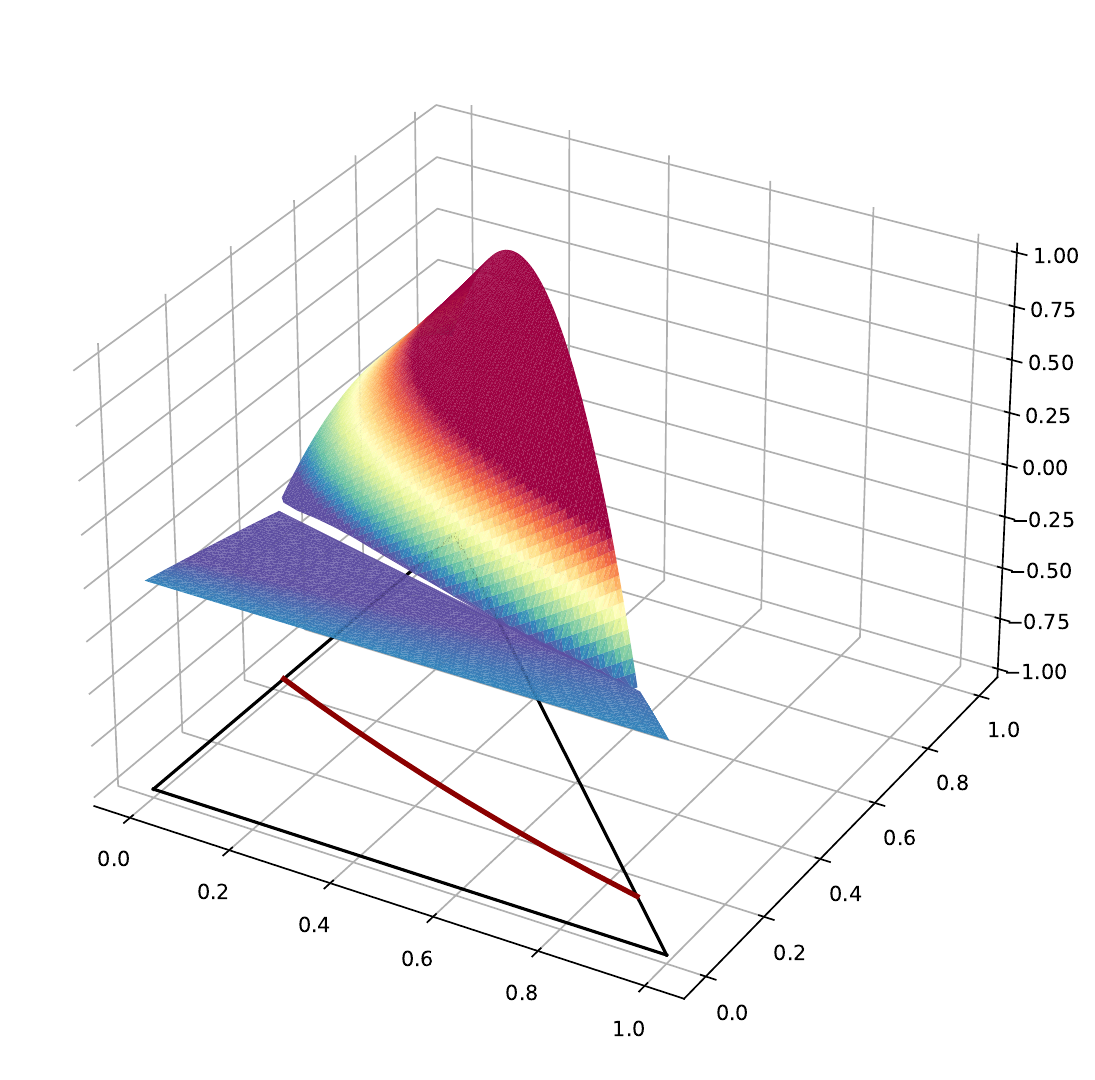}\\
        \vspace{0.02cm}
        \includegraphics[width=\textwidth]{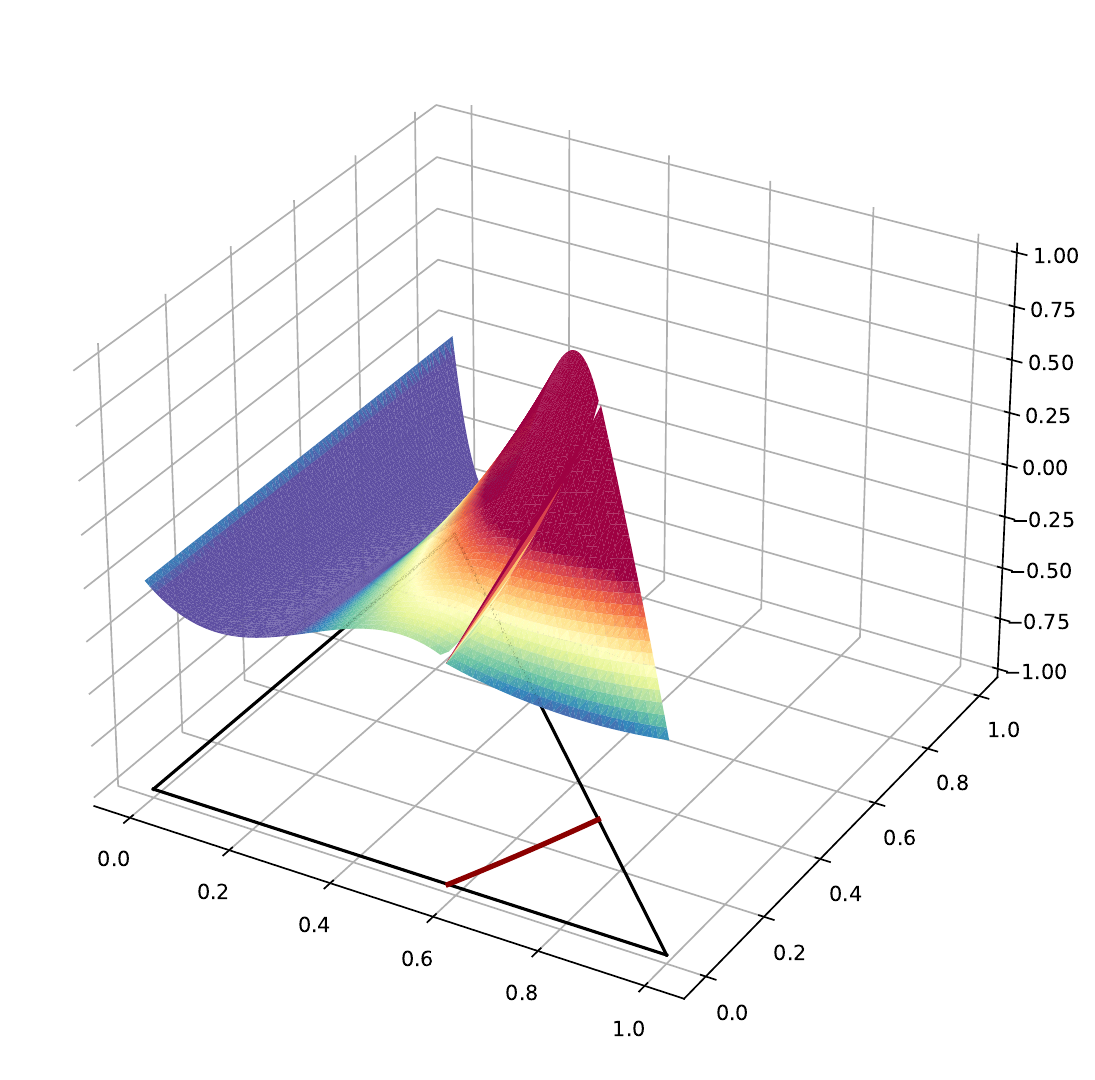}\\
        \vspace{0.02cm}
    \end{minipage}%
}%
\subfigure{
    \begin{minipage}[t]{0.33\linewidth}
        \centering
        \includegraphics[width=\textwidth]{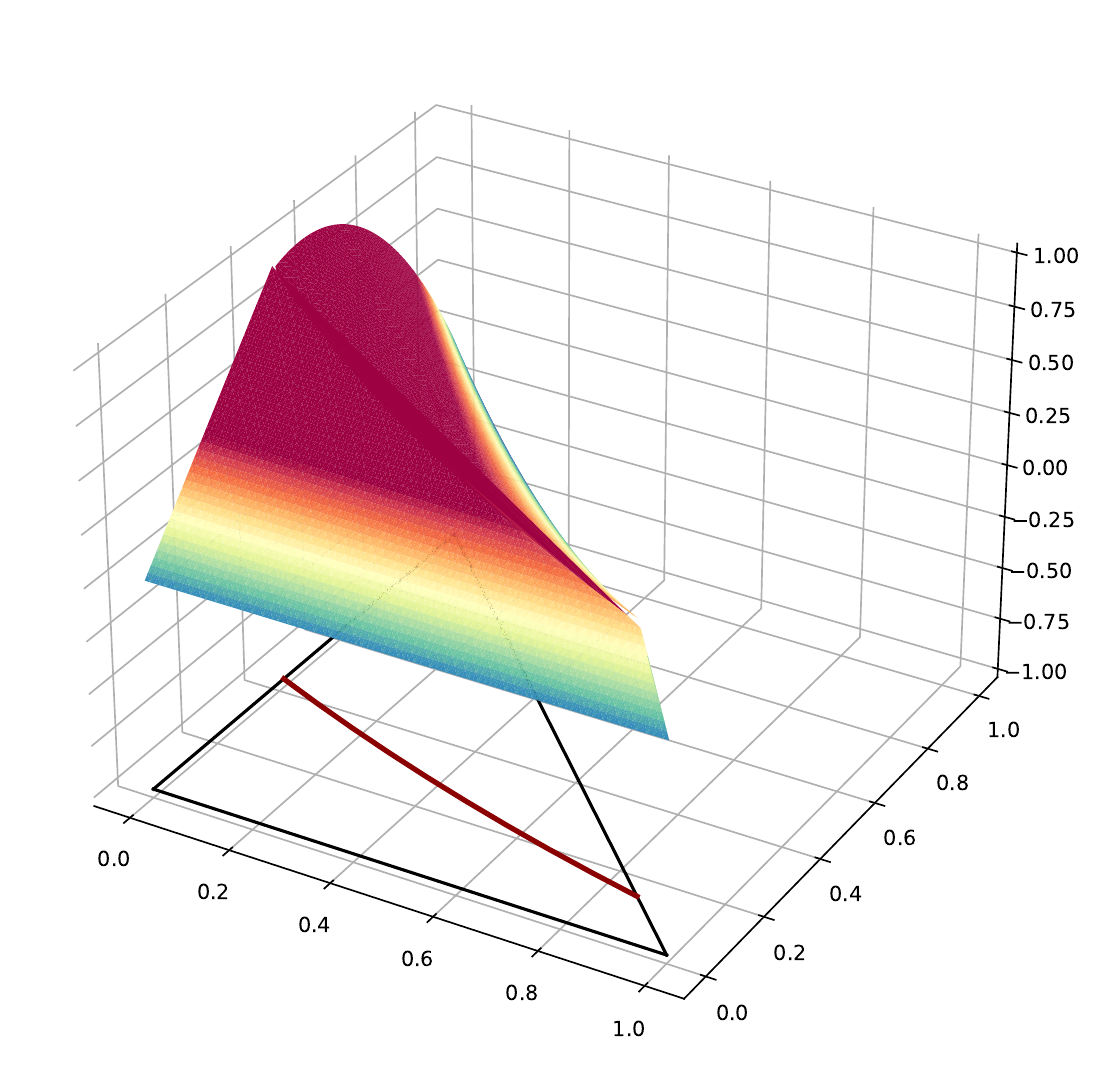}\\
        \vspace{0.02cm}
        \includegraphics[width=\textwidth]{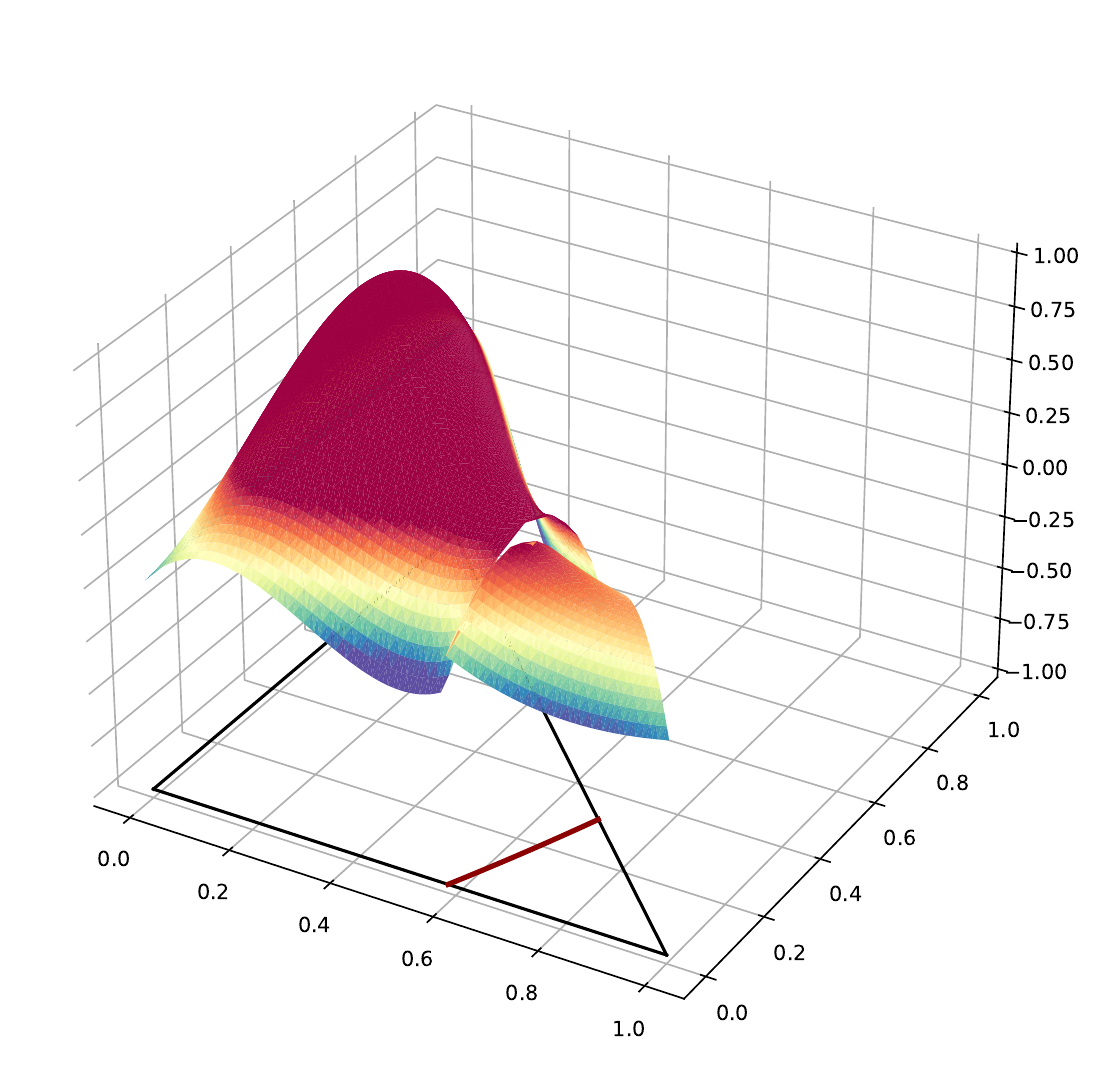}\\
        \vspace{0.02cm}
    \end{minipage}%
}%
\centering
\vspace{-1mm}
\caption{First Row: 3rd, 5th, 6th $\mathcal{P}_2$ basis functions; Second Row: 3rd, 5th, 10th $\mathcal{P}_3$ basis functions.}
\label{fig:basis}
\end{figure*}

For each $T\in\mathcal{T}_h$, we define the local finite element space $S^p_h(T)$ by:
\begin{equation}\label{eq: global space}
	S^p_h(T):=\left\{\begin{array}{l}
	\text{span}\{\psi_{i,T}^p, ~i\in \mathcal{I}_p\}, ~\text { if } T \in \mathcal{T}_h^n \\
	\text{span}\{\phi_{i,T}^p, ~i\in \mathcal{I}_p\}, ~\text{ if } T \in \mathcal{T}_h^i.
	\end{array}\right.
\end{equation}
The global IFE space $S^p_h(\Omega)$ is then defined by
\begin{equation}
	S^p_h(\Omega)=\left\{u \in L^2(\Omega): ~u~\text{satisfies conditions \textbf{(C1)}-\textbf{(C4)}}\right\}.
\end{equation}
where
\begin{description}
\item[(C1)]~ $u|_T\in S^p_h(T)$, for all $T\in\mathcal{T}_h$.
\item[(C2)]~ $u$ is continuous on every non-interface edge $e\in \mathcal{E}_h^n$.
\item[(C3)]~ $u$ is continuous at all nodals $A_i$, $i\in\mathcal{I}_p$ for all $T\in\mathcal{T}_h$.
\item[(C4)]~ $u|_{\partial \Omega}=0$.
\end{description}

\begin{figure}[htbp]
 \centering
 \includegraphics[width=.9\textwidth]{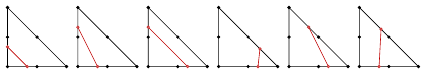}
 \caption{The exemplified cut cases when interface is a line. The sub-figures 1-3 are Type I elements for $\mathcal{P}_2$, the sub-figures 4-6 are Type II elements for $\mathcal{P}_2$.}
 \label{fig:line_uni}
\end{figure}


\begin{remark}
This least-squares functional \eqref{equ:bilinearform2} is sufficient for polynomial degree $p\le 3$. For higher degree such that $p \geq 4$, an appropriate extended jump conditions across interface needs to be employed in the least-squares functional. 
\end{remark}

\subsection{Property of Biharmonic IFE Spaces}
In this subsection, we present some basic properties for the biharmonic IFE space \eqref{eq: global space}. 
\begin{theorem}[Unisolvence]
	\label{thm:unisol}
	On every interface element $T\in \mathcal{T}_h^i$, for polynomial degrees $p=2$ and $p=3$, given any nodal values $\mathbf{v}$, there exists a unique IFE shape function in the form of \eqref{eq: IFE nodal}.
\end{theorem}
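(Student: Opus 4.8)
The plan is to recast the statement as the nonsingularity of the matrix $\mathbf{A}^{p,\lambda}$. By \eqref{eq: IFE nodal}, the shape function is $\varphi^p_T|_{\mathbf{v}}=\sum_{i\in\mathcal{I}_p}v_i\xi^p_{i,T}+\sum_{i\in\mathcal{I}_p}c_i\eta^p_{i,T}$, where $\mathbf{c}$ is fixed by the normal equations $\mathbf{A}^{p,\lambda}\mathbf{c}=-\mathbf{B}^{p,\lambda}\mathbf{v}$ coming from $\mathcal{J}^p_\lambda(\varphi^p_T|_{\mathbf{v}},\eta^p_{i,T})=0$. Since $\{\eta^p_{i,T}\}_{i\in\mathcal{I}_p}$ is a basis of $\mathcal{S}_{p,2}(T)$ and $\mathbf{A}^{p,\lambda}$ is exactly the Gram matrix of this basis for the symmetric positive semidefinite form $\mathcal{J}^p_\lambda$, the vector $\mathbf{c}$ --- hence the shape function --- exists and is unique for every $\mathbf{v}$ if and only if $\mathbf{A}^{p,\lambda}$ is symmetric positive definite, i.e.\ if and only if $|\cdot|_{\mathcal{J}^p_\lambda}$ is a norm on $\mathcal{S}_{p,2}(T)$. (The restriction matters: on all of $\mathcal{S}_p(T)$ the functional is only a semi-norm, as the pairs $(q,q)$ are annihilated, and the nodal structure of $\mathcal{S}_{p,2}(T)$ will be used below to exclude them.) Thus it suffices to show that $\eta\in\mathcal{S}_{p,2}(T)$ with $|\eta|_{\mathcal{J}^p_\lambda}=0$ implies $\eta=0$.

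Write $\eta$ through $\mathcal{M}_T$ as the function equal to $q_1\in\mathcal{P}_p$ on $T^+$ and to $q_2\in\mathcal{P}_p$ on $T^-$; by the definition of the $\eta^p_{i,T}$, the polynomial $q_1$ vanishes at every node $A_j$ with $j\in\mathcal{I}_p^+$ and $q_2$ vanishes at every node $A_j$ with $j\in\mathcal{I}_p^-$. Each of the four terms of $\mathcal{J}^p_\lambda(\eta,\eta)$ is a strictly positive multiple of $\int_{\Gamma_T^\lambda}(\,\cdot\,)^2\,ds$, and every integrand is the restriction of a polynomial, or of one of its derivatives, to the $C^2$ curve $\Gamma$, hence continuous; since $\Gamma_T^\lambda=T_\lambda\cap\Gamma\supseteq T\cap\Gamma$ is a nondegenerate arc under \textbf{(H1)}--\textbf{(H2)} and $h<r/2$, each integrand vanishes identically on $\Gamma_T^\lambda$. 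Therefore, on $\Gamma_T^\lambda$ we have $q_1=q_2$, $\partial_n q_1=\partial_n q_2$, $\beta^+\partial_{nn}q_1=\beta^-\partial_{nn}q_2$, and $\beta^+(\partial_n\Delta q_1+\partial_{ntt}q_1)=\beta^-(\partial_n\Delta q_2+\partial_{ntt}q_2)$.

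The heart of the proof is to show that $w:=q_1-q_2\equiv 0$. Parametrize $\Gamma_T^\lambda$ by arc length $\gamma(s)$; since $w\circ\gamma\equiv 0$ and $w$ is a polynomial, differentiating in $s$ gives $\nabla w\cdot t=0$ on $\Gamma_T^\lambda$, and together with $\partial_n w=0$ this forces $\nabla w=0$ on $\Gamma_T^\lambda$. For $p=2$ the two entries of $\nabla w$ are affine functions vanishing on $\Gamma_T^\lambda$; since $\Gamma_T^\lambda$ is not contained in a line this forces $\nabla w\equiv 0$, so $w$ is a constant vanishing on $\Gamma_T^\lambda$, hence $w\equiv 0$. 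For $p=3$ one differentiates $\nabla w\circ\gamma\equiv 0$ once more to obtain $H_w\,t=0$ on $\Gamma_T^\lambda$, whence $\partial_{tt}w=\partial_{tn}w=0$ there; then one brings in the higher-order jump conditions to also get $\partial_{nn}w=0$ on $\Gamma_T^\lambda$: when $\beta^+=\beta^-$ the condition $\beta^+\partial_{nn}q_1=\beta^-\partial_{nn}q_2$ gives this at once, and when $\beta^+\neq\beta^-$ one combines it with $\beta^+(\partial_n\Delta q_1+\partial_{ntt}q_1)=\beta^-(\partial_n\Delta q_2+\partial_{ntt}q_2)$ and the Frenet identity $\gamma''=\kappa n$. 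Once $H_w$ vanishes in every direction along $\Gamma_T^\lambda$, and since its entries are affine while $\Gamma_T^\lambda$ is not contained in a line, $H_w\equiv 0$, so $w$ is affine and the $p=2$ reasoning again gives $w\equiv 0$.

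Finally, $w\equiv 0$ means $q_1=q_2$ as polynomials, so $q_1$ vanishes at every node $A_j$ with $j\in\mathcal{I}_p^+$ and, since there $q_1=q_2$, also at every node with $j\in\mathcal{I}_p^-$; hence $q_1$ vanishes at all Lagrange nodes of $T$, and by unisolvence of the standard $\mathcal{P}_p$ Lagrange element $q_1=q_2\equiv 0$, i.e.\ $\eta=0$ and $\mathbf{c}=0$. This makes $\mathbf{A}^{p,\lambda}$ symmetric positive definite and proves the theorem. The step I expect to be the main obstacle is the $p=3$ case above --- extracting $\partial_{nn}w=0$ on $\Gamma_T^\lambda$ from the two Neumann-type jump conditions --- which requires careful bookkeeping of third-order derivatives resolved in the moving $(n,t)$-frame together with the curvature of $\Gamma$; a B\'ezout-type argument on $w$ directly would give an alternative route. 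The degenerate case of a locally straight interface, excluded by the hypothesis, would instead be handled by a finite enumeration of cut configurations, as illustrated for $p=2$ in Figure \ref{fig:line_uni}.
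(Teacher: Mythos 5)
Your reduction to positive definiteness of $\mathcal{J}^p_\lambda$ on $\mathcal{S}_{p,2}$, the closing nodal argument, and the $p=2$ case are all correct, and your route (differentiating the vanishing conditions along the curve) is genuinely different from the paper's, which factors $u_1-u_2=L^kQ$ against the defining polynomial of $\Gamma_T^\lambda$ and runs a B\'ezout-type degree count. However, the $p=3$ step you yourself flag as the ``main obstacle'' is a real gap, and the mechanism you propose for it does not work. You need $\partial_{nn}w=0$ on $\Gamma_T^\lambda$ to conclude $H_w\equiv 0$, but the third jump condition controls $\beta^+\partial_{nn}q_1-\beta^-\partial_{nn}q_2$, not the unweighted difference: when $\beta^+\neq\beta^-$ it gives $\partial_{nn}w=\bigl(1-\beta^+/\beta^-\bigr)\partial_{nn}q_1$ on the curve, and nothing in your argument forces $\partial_{nn}q_1$ itself to vanish there. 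The fourth condition has the same $\beta$-weighted structure, and differentiating either condition along the curve (Frenet identity included) only produces further relations between $\beta^+$-weighted derivatives of $q_1$ and $\beta^-$-weighted derivatives of $q_2$; no combination yields the unweighted statement $\partial_{nn}w=0$. Indeed, the only unweighted information available on the curve comes from conditions \eqref{equ:jump1}--\eqref{equ:jump2} and their tangential derivatives, which can never reach a pure normal second derivative. This is also why the paper never invokes \eqref{equ:jump3}--\eqref{equ:jump4} for curved interfaces: its Cases 2 and 3 obtain $u_1=u_2$ from the zeroth- and first-order conditions alone, using the global polynomial structure rather than pointwise derivative data.

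The good news is that your framework closes without touching conditions \eqref{equ:jump3}--\eqref{equ:jump4}. From $w=0$ and $\partial_nw=0$ on $\Gamma_T^\lambda$ you already have $\nabla w=0$ there, and for $p=3$ the components of $\nabla w$ are quadratics vanishing on $\Gamma_T^\lambda$. If the curve is non-algebraic or algebraic of degree greater than $2$, they vanish identically; if it is an irreducible conic with defining polynomial $L$, then $\partial_xw=c_1L$ and $\partial_yw=c_2L$, and equality of mixed partials gives $c_1\partial_yL=c_2\partial_xL$ identically, forcing $c_1=c_2=0$ because $\partial_xL$ and $\partial_yL$ are not proportional for a nondegenerate real conic. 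Either way $w$ is constant, hence zero, and your nodal argument finishes the proof. Alternatively, adopt the paper's factorization $w=L^kQ$ together with $\partial_n(L^kQ)=(\partial_nL)Q$ on the curve. With either repair the proof is complete; as written, the case $p=3$, $\beta^+\neq\beta^-$ is not proved.
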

\begin{proof}
	The proof of existence follows arguments similar to Theorem  {3.1} of \cite{2021ChenZhang}. For uniqueness, let $\mathcal{K}^p_{\lambda}$ denote  the null space of  {$\mathcal{J}_\lambda^p$, which is defined by $\mathcal{K}^p_{\lambda}:=\{ \phi\in \mathcal{S}_p(T): |\phi|_{\mathcal{J}_\lambda^p}=0 \}$. We note that all functions with vanishing nodal values live in space $\mathcal{S}_{p,2}$, then it suffices to prove $ \mathcal{S}_{p,2} \cap \mathcal{K}^p_{\lambda}=\{(0,0)\}$ for uniqueness.}
	
	We prove in different categories of curve types for $\Gamma_T^\lambda$.
	\begin{case}
		$\Gamma_T^\lambda$ is a straight line.
	\end{case}
	 For this degenerate case, let $u=(u_1,u_2) \in \mathcal{S}_{p,2} \cap \mathcal{K}^p_{\lambda}$. 
	 {Since $u\in \mathcal{S}_{p,2}$}, we may write $u=\sum_{i=1}^{|\mathcal{I}_{ {p}}|}c_i\eta_{i,T}$. 
	Because $\Gamma_T^\lambda$ is a line, the condition $u \in \mathcal{K}^p_{\lambda}$ implies
	\begin{equation}
	\jump{u}\equiv 0,~~ \jump{\partial_n u}\equiv 0, ~~\jump{\beta \partial_{nn} u }\equiv 0~~ \text{and}~~ \jump{\beta\left( \partial_n \Delta u+\partial_{ntt} u\right)}\equiv 0   {\text{~~ on }\Gamma_T^\lambda}.
	\end{equation}
	 {Each of the above condition is equivalent to multiple number of linear equations of $\mathbf{c}$ since $\jump{u}$, $\jump{\partial_n u}$, $\jump{\beta \partial_{nn} u }$, $\jump{\beta\left( \partial_n \Delta u+\partial_{ntt} u\right)}$ are $p$, $p-1$, $p-2$, $p-3$ degree polynomials on the line segment $\Gamma_T^\lambda$, respectively. To enforce those conditions, we need $(p+1)+p+(p-1)+(p-2)=4p-2$ equations in total. Thus the coefficient $\mathbf{c}$ should satisfy a linear system $\mathbf{Mc}=\mathbf{0} $ where $\mathbf{M} \in \mathbb{R}^{|\mathcal{I}_p|\times|\mathcal{I}_p|}$ since $|\mathcal{I}_p|$ is identical to $4p-2$ for (only) $p=2,3$.} The structure of $\mathbf{M}$ depends on how the interface segment intersects the element $T$.  {A similar idea is used in the $\mathcal{P}_1$ construction of IFE spaces, see e.g. \cite{li2003new}. Following the convention of IFE literature,  $T$ is called a type I element if} the interface passes through two legs of the right angle; otherwise, it is called a type II element. Some exemplified cutting configurations are illustrated in Figure \ref{fig:line_uni}  {for $p=2$}. In each case, uniqueness follows by verifying that $\det(\mathbf{M})\neq 0$ through direct computations. For instance, in the  {third} configuration of Figure \ref{fig:line_uni},  {we work on reference elements and denote the two intersection points of the interface and element by $D:=(d,0)$ and $E:=(0,e)$. The matrix $\mathbf{M}$ can be constructed by requiring}
 {
\begin{equation*}
	\jump{u}(D)=0,~\jump{u}(E)=0,~\jump{u}\left(\frac{D+E}{2}\right)=0,~\jump{\partial_n u}(D)=0,~\jump{\partial_n u}(E)=0,~\jump{\beta \partial_{nn} u }\left(\frac{D+E}{2}\right)=0.
\end{equation*}
} 
 {The} determinant can be explicitly written as:
	\begin{align}
		\det(\mathbf{M})=32 (d^2 + e^2)^2 (P_1+\rho P_2)> 0,	
	\end{align}
	 {for} $1/2\leq d \leq 1$ and $1/2\leq e \leq 1$, $\rho=\beta^+/\beta^-$, and $P_1$ are $P_2$ are polynomial of $d$ and $e$ which cannot be equal to $0$ at the same time:
	\begin{equation}
		\begin{split}
			P_1=&-d^3 (1 - 2 e)^2 e - 2 d^2 (-2 + e) e^3 - e^4 + d e^3 (-1 + 4 e) + d^4 (-1 + 4 e - 2 e^2) \geq P_1(1/2,1/2) \geq 0,\\
			P_2=&2 d^4 (-1 + e)^2 + d^3 (1 - 2 e)^2 e + 2 d^2 (-1 + e)^2 e^2 + 2 e^4 +  d (e^3 - 4 e^4) \geq P_2(1,1) \geq 0.
		\end{split}
	\end{equation}
	The remaining case can be verified using the same strategy, although the computation are tedious to be included here. Hence, we present the remaining cases in the  {Appendix \ref{Case1}}. This proves the positive definiteness of $\mathcal{J}^p_{\lambda}$ on $\mathcal{S}_{p,2}$, which means $\mathcal{J}^p_{\lambda}(\cdot,\cdot)$ is an inner product on $\mathcal{S}_{p,2}$. Consequently, the matrix  {$\mathbf{A}^{p,\lambda}$} is positive definite and therefore invertible.

	\begin{case}
		 $\Gamma_T^\lambda$ is either a non-algebraic curve or an algebraic curve with degree greater than $p$.
	\end{case}
	In this case, let $(u_1,u_2) \in \mathcal{S}_{p,2} \cap \mathcal{K}^p_{\lambda}$, then we claim that $(u_1,u_2) \equiv(0,0)$. Indeed, note that $u_1-u_2$ is a polynomial of degree at most $p$. It is impossible for such a polynomial to vanish almost everywhere on a non-algebraic curve, or an algebraic curve with degree greater than $p$. Therefore, we must have $u_1-u_2 \equiv 0$ on $T_{\lambda}$, which implies $u_1=u_2$.  Since  {$u_1,\,u_2\in\mathcal{P}_p(T_{\lambda})$ and} both vanish at all nodes $A_i$, $i\in \mathcal{I}_p$, it follows that $u_1=u_2\equiv 0$. 	
	 Hence, $\mathcal{J}^p_{\lambda}(\cdot,\cdot)$ is an inner product on $\mathcal{S}_{p,2}$. Again, we conclude that $\mathbf{A}$ is positive definite and thus invertible.
	\begin{case}
		$\Gamma_T^\lambda$ is an algebraic curve of degree larger than $p/2$ but less than or equal to $p$. 
	\end{case}
	In this case, let $(u_1,u_2) \in \mathcal{S}_{p,2} \cap \mathcal{K}_{\lambda}$. We may write 
	\[u_1-u_2=L^kQ\]
	 where $L(x,y)$ is a polynomial whose zeros are given by  $\Gamma_T^\lambda$ and  $k \geq 1$. Without loss of generality,  assume that $L$ is irreducible and relatively prime to the non-constant polynomial $Q$. Since  $u_1-u_2$ is of order at most $p$ and $\Gamma_T^\lambda$ is of degree larger than $p/2$, we can only have  $k<2$, i.e. $k=1$. Since $(u_1,u_2) \in  \mathcal{K}^p_{\lambda}$, we must have 
	 \[  {\partial_{n} (L^kQ) = 0,~~~\text{on }\Gamma_T^\lambda.} \]
	 By direct computation, we have 
	 \[\partial_{n} L^kQ = kL^{k-1} \partial_{n} (L)Q+L^k \partial_{n} Q.\] 
	 When $k=1$, this reduces to $\partial_{n} LQ = (\partial_{n} L)Q$ on $\Gamma_T^\lambda$. Here, $(\partial_n Q)L=0$ is  due to $L=0$ on $\Gamma_T^\lambda$. Since $L$ and $Q$ are relatively prime, it follows that $\partial_{n} L \equiv 0$ on $\Gamma_T^\lambda$.  {Then the implicit function theorem implies }$\partial_x L=\partial_y L=0$ on $\Gamma_T^\lambda$, contradicting the assumption that $\Gamma_T^\lambda$ is a nontrivial algebraic curve. Thus, the only possibility is  $k=0$ and $Q \equiv 0$, so that $u_1-u_2 {\equiv} 0$ on $T_{\lambda}$. The remaining conclusion follows as in the previous cases.
\end{proof}

\begin{remark}
	For general polynomial order $p$, the construction based on the bilinear form  \eqref{equ:bilinearform2} guarantees the existence of IFE shape functions. However, uniqueness can only be established when the interface curve is either non-algebraic or an algebraic curve of degree strictly greater than $p/2$.  {This can be also seen from our argument of Case 1 linear case, the number of constraints on coefficients $\mathbf{c}$, $4p-2$ will be less than dimension of $\mathbf{c}$ when $p>3$, which will result in a underdetermined system.} So for higher-order cases, it is natural to extend the interface jump conditions by incorporating additional terms in \eqref{equ:bilinearform2}.
\end{remark}

\begin{theorem}[Least Square Construction]
\label{thm:lsq}
	On every interface element $T\in \mathcal{T}_h^i$, given nodal values $\mathbf{v}$, the IFE shape function constructed by the above procedure minimizes the semi norm $|\cdot|_{\mathcal{J}^p_\lambda}$ over $\mathcal{S}_p$.
\end{theorem}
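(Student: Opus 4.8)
The statement is the classical least–squares (normal–equations) characterization of the orthogonal projection onto an affine subspace, so the plan is short. The first thing to pin down is the set of competitors. Since $\{\xi^p_{i,T}\}_{i\in\mathcal{I}_p}\cup\{\eta^p_{i,T}\}_{i\in\mathcal{I}_p}$ is a basis of $\mathcal{S}_p(T)=\mathcal{S}_{p,1}\oplus\mathcal{S}_{p,2}$, every $\varphi\in\mathcal{S}_p(T)$ is uniquely $\varphi=\sum_i w_i\xi^p_{i,T}+\sum_i c_i\eta^p_{i,T}$; a direct check using the Lagrange property $\psi^p_{i,T}(A_j)=\delta_{ij}$ shows $\xi^p_{i,T}(A_j)=\delta_{ij}$ and $\eta^p_{i,T}(A_j)=0$ for all $i,j\in\mathcal{I}_p$, so the nodal values of $\varphi$ equal $(w_i)$ and do not depend on $(c_i)$. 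Hence, with $\varphi_0:=\sum_i v_i\xi^p_{i,T}\in\mathcal{S}_{p,1}$, the set of $\mathcal{S}_p(T)$ functions carrying the prescribed nodal data $\mathbf v$ is exactly the affine space $\varphi_0+\mathcal{S}_{p,2}$, and this is the set over which ``minimizing over $\mathcal{S}_p$'' is to be understood (an unconstrained minimum of a semi-norm would be meaningless). Because $t\mapsto t^2$ is increasing on $[0,\infty)$, it suffices to minimize $|\cdot|_{\mathcal{J}^p_\lambda}^2$ over this affine space.

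Next I would record the Galerkin orthogonality that is built into the construction. Writing $\widehat\varphi:=\varphi_T^p|_{\mathbf v}=\varphi_0+\widehat w$ with $\widehat w:=\sum_i c_i\eta^p_{i,T}\in\mathcal{S}_{p,2}$, the defining system $\mathbf A^{p,\lambda}\mathbf c=-\mathbf B^{p,\lambda}\mathbf v$ is, by the very definitions of $\mathbf A^{p,\lambda}$ and $\mathbf B^{p,\lambda}$, nothing but $\mathcal{J}^p_\lambda(\widehat\varphi,\eta^p_{i,T})=0$ for every $i\in\mathcal{I}_p$; by bilinearity this extends to
\[
\mathcal{J}^p_\lambda(\widehat\varphi,\, z)=0 \qquad \text{for all } z\in\mathcal{S}_{p,2}.
\]

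With these two ingredients the minimality is a one-line Pythagorean estimate. For an arbitrary competitor $\varphi_0+w$ with $w\in\mathcal{S}_{p,2}$, decompose $\varphi_0+w=\widehat\varphi+(w-\widehat w)$, noting $w-\widehat w\in\mathcal{S}_{p,2}$; expanding the nonnegative quadratic form $\mathcal{J}^p_\lambda(\cdot,\cdot)$ and applying the orthogonality with $z=w-\widehat w$ gives
\begin{align*}
|\varphi_0+w|_{\mathcal{J}^p_\lambda}^2 &= |\widehat\varphi|_{\mathcal{J}^p_\lambda}^2 + 2\,\mathcal{J}^p_\lambda(\widehat\varphi,\, w-\widehat w) + |w-\widehat w|_{\mathcal{J}^p_\lambda}^2\\
&= |\widehat\varphi|_{\mathcal{J}^p_\lambda}^2 + |w-\widehat w|_{\mathcal{J}^p_\lambda}^2 \;\ge\; |\widehat\varphi|_{\mathcal{J}^p_\lambda}^2 ,
\end{align*}
and taking square roots yields $|\varphi_0+w|_{\mathcal{J}^p_\lambda}\ge|\widehat\varphi|_{\mathcal{J}^p_\lambda}$, i.e. $\widehat\varphi$ is a minimizer, as claimed. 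For $p\le 3$ with $\Gamma_T^\lambda$ not a straight segment, Theorem \ref{thm:unisol} shows $\mathcal{J}^p_\lambda$ is an inner product on $\mathcal{S}_{p,2}$, so equality forces $w=\widehat w$ and the minimizer is unique; for $p\ge 4$, where $\mathcal{J}^p_\lambda$ is only a semi-norm, a minimizer still exists (the functional is nonnegative, hence bounded below on the affine space, and a bounded-below positive-semidefinite quadratic attains its infimum) but may fail to be unique.

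Essentially no step here is hard; the argument is a transcription of the orthogonal-projection lemma. The only places deserving care are bookkeeping: verifying in the first step that the $\eta^p_{i,T}$ carry zero nodal data while the $\xi^p_{i,T}$ reproduce the nodal data, so that the competitor set is precisely $\varphi_0+\mathcal{S}_{p,2}$ and nothing larger; and being explicit in the Pythagorean step that the cross term vanishes because it is $w-\widehat w$, rather than $w$ itself, that lies in $\mathcal{S}_{p,2}$, the space against which orthogonality was imposed.
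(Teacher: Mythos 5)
Your proof is correct and is essentially the argument the paper intends: the paper simply defers to Theorem 2.2 of the cited reference, which is the same normal-equations/orthogonal-projection argument you write out (competitor set $\varphi_0+\mathcal{S}_{p,2}$, Galerkin orthogonality from $\mathcal{J}^p_\lambda(\widehat\varphi,\eta^p_{i,T})=0$, Pythagorean expansion). Your explicit check that the $\eta^p_{i,T}$ carry zero nodal data, and the remark on attainment versus uniqueness of the minimizer, are welcome details the paper leaves implicit.
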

\begin{proof}
	The proof follows the same argument as Theorem 2.2 in \cite{2021ChenZhang}. The property is a consequence of the least-squares construction of the IFE shape functions. 
	\end{proof}


\begin{theorem}[Partition of Unity]
\label{thm:unique}
	On every interface element $T\in \mathcal{T}_h^i$, IFE basis functions satisfy the following partition of unity property:
	\begin{equation}
		\sum_{i\in \mathcal{I}_p} \phi^p_{i, T} \equiv 1,~~~~~p=2,3.
	\end{equation} 
\end{theorem}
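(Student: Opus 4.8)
The plan is to exploit the linearity of the least-squares construction together with the elementary fact that the constant function $1$ satisfies all interface conditions \eqref{equ:jump1}--\eqref{equ:jump4} exactly, so it is a fixed point of the construction. First I would record that, by Theorem \ref{thm:unisol}, the matrix $\mathbf{A}^{p,\lambda}$ is invertible for $p=2,3$; hence the coefficient vector solving $\mathbf{A}^{p,\lambda}\mathbf{c}=-\mathbf{B}^{p,\lambda}\mathbf{v}$ is $\mathbf{c}=-(\mathbf{A}^{p,\lambda})^{-1}\mathbf{B}^{p,\lambda}\mathbf{v}$, a linear function of $\mathbf{v}$. Consequently the maps $\mathbf{v}\mapsto\varphi_T^p|_{\mathbf{v}}$ and $\mathbf{v}\mapsto\mathcal{M}_T\varphi_T^p|_{\mathbf{v}}$ are linear. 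Since $\phi_{i,T}^p=\mathcal{M}_T\varphi_T^p|_{\mathbf{e}_i}$ and $\sum_{i\in\mathcal{I}_p}\mathbf{e}_i=\mathbf{1}:=(1,\dots,1)\in\mathbb{R}^{|\mathcal{I}_p|}$, this reduces the claim to the single identity $\sum_{i\in\mathcal{I}_p}\phi_{i,T}^p=\mathcal{M}_T\varphi_T^p|_{\mathbf{1}}=1$, i.e. to identifying $\varphi_T^p|_{\mathbf{1}}$.

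Next I would compute the candidate pair with coefficient vector $\mathbf{c}=\mathbf{1}$. From the definitions of $\xi_{i,T}^p$ and $\eta_{i,T}^p$, one has $\xi_{i,T}^p+\eta_{i,T}^p=(\psi_{i,T}^p,\psi_{i,T}^p)$ for every $i\in\mathcal{I}_p$ (whether $i\in\mathcal{I}_p^+$ or $i\in\mathcal{I}_p^-$), so using the partition of unity $\sum_{i\in\mathcal{I}_p}\psi_{i,T}^p\equiv 1$ of the standard Lagrange basis,
\[
\sum_{i\in\mathcal{I}_p}\xi_{i,T}^p+\sum_{i\in\mathcal{I}_p}\eta_{i,T}^p
=\Bigl(\textstyle\sum_{i\in\mathcal{I}_p}\psi_{i,T}^p,\ \sum_{i\in\mathcal{I}_p}\psi_{i,T}^p\Bigr)=(1,1).
\]
Its image $\mathcal{M}_T(1,1)$ is the \emph{globally} constant function $1$ on $T$, so every jump appearing in $\mathcal{J}^p_\lambda$, namely $\jump{\cdot}$, $\jump{\partial_n\cdot}$, $\jump{\beta\partial_{nn}\cdot}$ and $\jump{\beta(\partial_n\Delta\cdot+\partial_{ntt}\cdot)}$, vanishes identically on $\Gamma_T^\lambda$; hence each term of \eqref{equ:bilinearform2} evaluated at $(1,1)$ is zero, and in particular $\mathcal{J}^p_\lambda\bigl((1,1),\eta_{j,T}^p\bigr)=0$ for all $j\in\mathcal{I}_p$. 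Expanding $(1,1)=\sum_i\xi_{i,T}^p+\sum_i\eta_{i,T}^p$ in the first slot, this last family of identities is exactly $\mathbf{A}^{p,\lambda}\mathbf{1}=-\mathbf{B}^{p,\lambda}\mathbf{1}$, so $\mathbf{c}=\mathbf{1}$ solves the normal equations. By invertibility of $\mathbf{A}^{p,\lambda}$ it is the unique solution, hence $\varphi_T^p|_{\mathbf{1}}=(1,1)$ and therefore $\sum_{i\in\mathcal{I}_p}\phi_{i,T}^p=\mathcal{M}_T(1,1)\equiv 1$.

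An equivalent, slightly more conceptual route uses Theorem \ref{thm:lsq}: $\varphi_T^p|_{\mathbf{1}}$ minimizes $|\cdot|_{\mathcal{J}^p_\lambda}$ among all elements of $\mathcal{S}_p$ whose $\xi$-part has nodal values $\mathbf{1}$. The pair $(1,1)$ lies in this affine set and attains the absolute minimum value $0$ of the nonnegative functional; since Theorem \ref{thm:unisol} guarantees that $|\cdot|_{\mathcal{J}^p_\lambda}$ is a genuine norm on $\mathcal{S}_{p,2}$ (so the minimizer is unique), we again obtain $\varphi_T^p|_{\mathbf{1}}=(1,1)$. I do not anticipate a serious obstacle: the only points that need care are (i) verifying cleanly that the jump operators in $\mathcal{J}^p_\lambda$ annihilate the pair $(1,1)$ — which just amounts to noting that after applying $\mathcal{M}_T$ one genuinely has a single smooth constant, so all its normal and tangential derivative jumps are zero — and (ii) ensuring the reduction $\sum_i\phi_{i,T}^p=\mathcal{M}_T\varphi_T^p|_{\mathbf{1}}$ is legitimate, which is where invertibility of $\mathbf{A}^{p,\lambda}$ from Theorem \ref{thm:unisol} (hence the restriction $p=2,3$) enters. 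Everything else is bookkeeping with the isomorphism $\mathcal{M}_T$ and the standard Lagrange partition of unity.
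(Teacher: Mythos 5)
Your proposal is correct and follows essentially the same route as the paper: reduce to the nodal vector $\mathbf{v}=\mathbf{1}$, observe that $\mathbf{c}=\mathbf{1}$ yields the pair $(1,1)$ whose jumps all vanish so it solves the normal equations, and invoke the uniqueness from Theorem \ref{thm:unisol} to conclude. Your write-up is in fact slightly more explicit than the paper's (which asserts that $\mathbf{c}=\mathbf{1}$ is ``clearly'' valid), but there is no substantive difference in approach.
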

\begin{proof}
	Let $\phi^p_T=\sum_{i\in \mathcal{I}_p}\phi^p_{i,T} {\in\mathcal{H}_p}$, so $\mathcal{M}_T^{-1}\phi^p_{T}\in \mathcal{S}_p(T)$. By  {$\phi^p_{i,T}(A_j)=\delta_{ij}$}, we have 
	\[\phi^p_{T}(A_i)=1,~i\in \mathcal{I}_p.\] 
	 {Thus $\phi^p_T$ is an IFE function with nodal value $\mathbf{v}=\mathbf{1}$. Then $\phi^p_T$ should admit the following decomposition} 
	\begin{equation}
		\psi^p_{T}=\mathcal{M}_T^{-1}\phi^p_T,~~ \psi^p_{T}=\sum_{i\in \mathcal{I}_p} \xi^p_{i,T}+\sum_{i\in \mathcal{I}_p} c_i\eta^p_{i,T},
	\end{equation}
	 {where $\mathbf{c}$ is a coefficient vector determined by the least-squares construction. On the other hand, $\mathbf{c}=\mathbf{1}$ is a valid solution. This is because we have $\sum_{i\in \mathcal{I}_p} \xi^p_{i,T}+\sum_{i\in \mathcal{I}_p} c_i\eta^p_{i,T} \equiv (1,1)$ on $T$ (due to the partition of unity results of standard Lagrange  basis functions) and subsequently $\mathcal{J}^p_\lambda(\psi^p_{T},\eta^p_{i,T})=0$ for $i=1,2,...,|\mathcal{I}_p|$ when $\mathbf{c}=\mathbf{1}$. }By uniqueness result in Theorem \ref{thm:unisol}, this solution is unique, so $\mathbf{c}=\mathbf{1}$. Therefore,
	\begin{equation}
		\phi^p_T=\mathcal{M}_T\left(\sum_{i\in \mathcal{I}_p} \xi^p_{i,T}+\sum_{i\in \mathcal{I}_p} \eta^p_{i,T}\right)\equiv 1, \text{ on }T.
	\end{equation}
\end{proof}

\section{Immersed $C^0$ Interior Penalty  Method}
\label{sec:scheme}
In this section, we derive the immersed $C^0$ interior penalty scheme for the biharmonic interface problem. 

\subsection{Derivation of Immersed $C^0$ Interior Penalty Method}
Let $u\in PH^4(\Omega)$ be the true solution of the problem \eqref{equ:interface}. Define 
\begin{equation}
	\begin{split}
		V_h=\{ v\in H^1_0(\Omega):&~v|_T \in H^2(T) ~\text{ if }~ T \in \mathcal{T}_h^n,~v|_{T^{\pm}} \in H^2(T^{\pm})~\text{ if }~ T \in \mathcal{T}_h^i\}.
	\end{split}	
\end{equation}
On each interface element $T\in\mathcal{T}_h^i$, multiplying the equation  \eqref{equ:bihm} by $v\in V_h$ and integrating by parts on the subelement $T^s$ ($s=\pm$) gives
\begin{equation}
\begin{aligned}
\int_{T^s} \Delta \left( \beta \Delta u\right)v ~dX 
&=-\int_{T^s} \nabla v \cdot\left(\nabla \beta \Delta u\right) ~dX +\int_{\partial T^s} v\left(\nabla \beta \Delta u \cdot \mathbf{n}\right) ~ds\\
&=\int_{T^s}\beta\nabla^2 u : \nabla^2 v ~dX + \int_{\partial T^s} \beta \left( {\partial_n \Delta u}\right) v ~ds - \int_{\partial T^s}\beta \left(\nabla^2 u\right) : \left(\nabla v \otimes n\right) ~ds,
\end{aligned}
\end{equation}
where $(a \otimes b)_{ij}=a_i b_j$. After some simplification, we obtain
\begin{equation}
\label{equ:localgreen}
\begin{aligned}
\int_{T^s}\left(\beta\Delta^{2} u\right) v ~dX = \int_{T^s} \beta\nabla^{2} u: \nabla^{2} v ~dX + \int_{\partial T^s}\left[\beta\partial_n \Delta u v-\beta  \partial_{nn} u \partial_n v-\beta \partial_{nt}u
\partial_t v\right] d s.
\end{aligned}
\end{equation}
Using the tangential derivative identity:
\begin{equation}
	\begin{aligned}
		\int_{\partial T^s} \partial_{nt} u\partial_{t} v ~ds = \int_{\partial T^s} \partial_t\left(\partial_{nt} uv\right) ~ds - \int_{\partial T^s} \partial_{ntt} u v ~ds,
	\end{aligned}
\end{equation}
together with the gradient theorem
\begin{equation}
	\begin{aligned}
		\int_{\partial T^s} \partial_t \left(\partial_{nt} uv\right) ~ds = 0.
	\end{aligned}
\end{equation}
The equation \eqref{equ:localgreen} becomes
\begin{equation}
\label{equ:localgreen2}
\begin{aligned}
\int_{T^s}\left(\beta\Delta^{2} u\right) v ~dX = \int_{T^s} \beta\nabla^{2} u: \nabla^{2} v ~dX - \int_{\partial T^s}\beta\partial_{nn}u \partial_n v d s + \int_{\partial T^s}\beta\left(\partial_{n}\Delta u +\partial_{ntt} u  \right) v d s.
\end{aligned}
\end{equation}
Similarly, on non-interface elements $T\in \mathcal{T}_h^n$, it holds \cite{2005BrennerSung}
\begin{equation}
\label{equ:localgreen3}
\begin{aligned}
\int_{T}\left(\beta\Delta^{2} u\right) v ~dX = \int_{T} \beta\nabla^{2} u: \nabla^{2} v ~dX - \int_{\partial T}\beta\partial_{nn}u \partial_n v d s + \int_{\partial T}\beta\left(\partial_{n}\Delta u +\partial_{ntt} u  \right) v d s.
\end{aligned}
\end{equation}
Summing \eqref{equ:localgreen2} over $s=+,-$ and all interface elements $T \in \mathcal{T}^i_h$,  together with summing \eqref{equ:localgreen3} over all $T \in \mathcal{T}^n_h$, yields
\begin{equation}
\begin{aligned}
\sum_{T\in \mathcal{T}_h}\int_{T} fv ~dX = 
&\sum_{T\in \mathcal{T}_h} \int_{T} \beta\nabla^{2} u: \nabla^{2} v ~dX \\
&- \sum_{e\in \mathring{\mathcal{E}}_h}\int_{e} \aver{\beta\partial_{nn}u }\jump{\partial_n v} d s + \sum_{e\in \mathring{\mathcal{E}}^i_h}\int_{e} \aver{\beta\left(\partial_{n}\Delta u +\partial_{ntt} u  \right)} \jump{v} d s\\
&- \sum_{T \in \mathcal{T}_h^i}\int_{\Gamma_T} \aver{\beta\partial_{nn}u }\jump{\partial_n v} d s + \sum_{T \in \mathcal{T}_h^i}\int_{\Gamma_T} \aver{\beta\left(\partial_{n}\Delta u +\partial_{ntt} u  \right)} \jump{v} d s.
\end{aligned}
\end{equation}
Finally, adding symmetric terms and stabilization terms following the convention of interior penalty methods, the immersed $C^0$ interior penalty Galerkin scheme for the biharmonic interface problem is defined as: find $u_h\in S_h^p(\Omega)$, such that:
\begin{equation}
	\label{equ:scheme}
	a_h(u_h,v_h)=L_{f}(v_h), ~~\forall~v_h \in S_h^p(\Omega),
\end{equation}
where $a_h(u_h,v_h) = A_h(u_h,v_h)+J_{h,u}(u_h,v_h)+J_{h,n}(u_h,v_h)$. The components of $a_h(u_h,v_h)$ are given by
\begin{equation}
	\begin{aligned}
		A_h(u,v) = & \sum_{T\in \mathcal{T}_h} \int_{T} \beta\nabla^{2} u: \nabla^{2} v ~dX,\\
		J_{h,u}(u,v) =&  - \sum_{e\in \mathring{\mathcal{E}}_h}\int_{e} \aver{\beta\partial_{nn} u }\jump{\partial_n v} d s- \sum_{e\in \mathring{\mathcal{E}}_h}\int_{e} \aver{\beta\partial_{nn} v }\jump{\partial_n u} d s+ \sum_{e\in \mathring{\mathcal{E}}^n_h}\frac{\sigma_u \beta}{|e|}\int_{e} \jump{\partial_n u}\jump{\partial_n v} d s\\
			&+ \sum_{e\in \mathring{\mathcal{E}}^i_h}\left(\frac{\sigma_u \beta^+}{|e^+|}\int_{e^+} \jump{\partial_n u}\jump{\partial_n v} d s+\frac{\sigma_u \beta^-}{|e^-|}\int_{e^-} \jump{\partial_n u}\jump{\partial_n v} d s\right) 
			+ \sum_{T \in \mathcal{T}_h^i}\frac{\sigma_u \aver{\beta}}{h_T}\int_{\Gamma_T} \jump{\partial_n u}\jump{\partial_n v} d s,
	\end{aligned}
\end{equation}
\begin{equation}
	\begin{aligned}
		 J_{h,n}(u,v) = & \sum_{e\in \mathring{\mathcal{E}}^i_h}\sigma_F\left( |e^+|\beta^+\int_{e^+} \jump{\partial_{nn} u }\jump{\partial_{nn} v } d s + |e^-|\beta^-\int_{e^-} \jump{\partial_{nn} u }\jump{\partial_{nn} v } d s \right)\\
		    &+\sum_{e\in \mathring{\mathcal{E}}_h^n \cap \mathcal{F}_h^i}\sigma_F|e|\beta\int_{e} \jump{\partial_{nn} u }\jump{\partial_{nn} v } d s+\sum_{e\in \mathring{\mathcal{E}}^i_h}\frac{\sigma_n \aver{\beta}}{|e|^3 } \int_{e} \jump{u}\jump{v} d s + \sum_{T \in \mathcal{T}_h^i}\frac{\sigma_n \aver{\beta}}{h_T^3 } \int_{\Gamma_T} \jump{u}\jump{v} d s.
	\end{aligned}
\end{equation}
with $\aver{\beta}= (\beta^+ + \beta^-)/2$.
In this scheme, stabilization terms are included both on interface edges and interface segments, since the IFE space does not enforce the interface conditions pointwise. Inspired by \cite{guzman2017finite}, we further add penalty terms on the flux $\partial^2_{n} u$ to enhance the numerical stability. These terms are applied to all edges of interface elements. A similar idea was employed in \cite{2021ChenZhang} where additional penalty terms were introduced to stabilize the pressure field in Stokes interface problems.

\subsection{Well-posedness of the Numerical Solution}
We establish the well-posedness of the numerical solution $u_h$ obtained by the scheme \eqref{equ:scheme}. Define the mesh-dependent norm $\|\cdot\|_h$ on $S_h^p$ by
\begin{equation}
	\begin{split}
		\|v\|_h^2=&\sum_{T\in \mathcal{T}_h} \beta\| \nabla^2v\|^2_{L^2(T)}+ \sum_{e\in \mathring{\mathcal{E}}^n_h}\frac{\sigma_u \beta}{|e|}\left\|\jump{\partial_n v}\right\|^2_{L^2(e)} + \sum_{T \in \mathcal{T}_h^i}\frac{\sigma_u \aver{\beta}}{h_T} \mynorm{\jump{\partial_n v}}^2_{L^2(\Gamma_T)}\\
		+& \sum_{e\in \mathring{\mathcal{E}}^i_h}\sigma_u\left(\frac{ \beta^+}{|e^+|}\mynorm{\jump{\partial_n v}}^2_{L^2(e^+)}+\frac{ \beta^-}{|e^-|}\mynorm{\jump{\partial_n v}}^2_{L^2(e^-)} \right) + \sum_{e\in \mathring{\mathcal{E}}_h^n \cap \mathcal{F}_h^i}\sigma_F |e|\beta \mynorm{\jump{\partial_{nn} v }}^2_{L^2(e)}\\
		+& \sum_{e\in \mathring{\mathcal{E}}^i_h}\sigma_F\left( |e^+|\beta^+ \mynorm{\jump{\partial_{nn} v }}^2_{L^2(e^+)} + |e^-|\beta^- \mynorm{\jump{\partial_{nn} v }}^2_{L^2(e^-)} \right) +\sum_{e\in \mathring{\mathcal{E}}^i_h}\frac{\sigma_n \aver{\beta}}{|e|^3 } \mynorm{ \jump{v}}^2_{L^2(e)}\\
		+& \sum_{T \in \mathcal{T}_h^i}\frac{\sigma_n \aver{\beta}}{h_T^3 } \mynorm{\jump{v}}^2_{L^2(\Gamma_T)}.
	\end{split}
\end{equation}

\begin{lemma}
	$\|\cdot\|_h$ defines a norm on $S_h^p$.
\end{lemma}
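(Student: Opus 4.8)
The plan is to verify the three norm axioms for $\|\cdot\|_h$ on the finite-dimensional space $S_h^p$; since $\|\cdot\|_h$ is manifestly built from sums of $L^2$-seminorms with nonnegative weights ($\beta^\pm>0$ and all $\sigma$-parameters taken positive), nonnegativity, absolute homogeneity, and the triangle inequality are immediate from the corresponding properties of the $L^2$-norms. Hence the only real content is definiteness: if $\|v\|_h = 0$ for $v \in S_h^p$, then $v \equiv 0$. So I would open with one sentence dispatching the routine axioms, then focus entirely on definiteness.

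For definiteness, suppose $\|v\|_h=0$. Then every term in the sum vanishes; in particular $\sum_{T\in\mathcal{T}_h}\beta\|\nabla^2 v\|_{L^2(T)}^2 = 0$, so $\nabla^2 v = 0$ on each $T$, meaning $v$ is affine on every element (on each subelement $T^\pm$ for $T\in\mathcal{T}_h^i$). The strategy is then to propagate this pointwise: first I would show $v$ is globally continuous, so that "affine on each element" together with continuity forces $v$ to be a single globally affine function, and then the homogeneous boundary condition $v|_{\partial\Omega}=0$ (built into $S_h^p(\Omega)$ via \textbf{(C4)}) kills it. Continuity on non-interface edges $e\in\mathcal{E}_h^n$ is already guaranteed by condition \textbf{(C2)}. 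Across interface edges $e\in\mathring{\mathcal{E}}_h^i$, I would use the penalty term $\sum_{e\in\mathring{\mathcal{E}}_h^i}\frac{\sigma_n\aver{\beta}}{|e|^3}\|\jump{v}\|_{L^2(e)}^2 = 0$, which forces $\jump{v}=0$ on $e^+$ and $e^-$, hence (since $e = e^+\cup e^-$ up to the single interface-crossing point) on all of $e$; combined with \textbf{(C3)} (continuity at the Lagrange nodes), this gives continuity across interface edges too. Therefore $v\in C^0(\Omega)$, and being piecewise affine it is globally affine, and with zero boundary trace it vanishes.

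One subtlety to handle carefully: on an interface element $T\in\mathcal{T}_h^i$, $\nabla^2 v = 0$ only tells us $v$ is affine on $T^+$ and on $T^-$ separately, a priori with a jump across $\Gamma_T$. To rule this out I would invoke the remaining interface penalty term $\sum_{T\in\mathcal{T}_h^i}\frac{\sigma_n\aver{\beta}}{h_T^3}\|\jump{v}\|_{L^2(\Gamma_T)}^2 = 0$, so $\jump{v}=0$ on $\Gamma_T$. Since $v^+|_{\Gamma_T}$ and $v^-|_{\Gamma_T}$ are restrictions of affine functions agreeing on the curve $\Gamma_T$ (which, under assumptions \textbf{(H1)}--\textbf{(H2)}, is a nondegenerate arc, in particular not contained in a single point and spanning more than one direction unless it is a segment — and even a segment has infinitely many points), the two affine functions must coincide identically on $T$. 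Thus $v$ is genuinely affine on each whole element, and the continuity-plus-boundary argument above finishes the proof.

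The main obstacle — though it is mild — is this last gluing step on interface elements: one must be slightly careful that $\jump{v}=0$ on $\Gamma_T$ really forces the two affine pieces to agree on all of $T_\lambda$ (not just near $\Gamma_T$), which requires noting that an arc of a $C^2$ curve always contains enough points (indeed a set with an accumulation point, or two points in distinct directions) to pin down an affine function uniquely; alternatively, one can bypass interface elements entirely by first establishing global affineness on the union of non-interface elements and extending. Everything else is bookkeeping across the various families of edges ($\mathcal{E}_h^n$, $\mathring{\mathcal{E}}_h^i$, $\mathring{\mathcal{E}}_h^n\cap\mathcal{F}_h^i$) and invoking the partition-of-unity / nodal-interpolation structure of $S_h^p$ recorded in Section~\ref{sec:space} and conditions \textbf{(C1)}--\textbf{(C4)}.
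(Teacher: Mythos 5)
Your overall strategy matches the paper's: dispatch the semi-norm axioms, then prove definiteness by using $\nabla^2 v=0$ to get piecewise affineness and the jump penalties to glue the pieces together. Your treatment of the interface elements (invoking the $\Gamma_T$ penalty and the observation that two affine functions agreeing on a nondegenerate arc of a $C^2$ curve must coincide on the whole element) is in fact more careful than the paper's one-line version of that step.

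However, there is a genuine gap at the end: the claim that ``$v\in C^0(\Omega)$ and piecewise affine'' implies ``$v$ is globally affine'' is false. A standard $\mathcal{P}_1$ hat function is continuous, piecewise affine, and vanishes on the boundary of its support, yet is not identically zero; your argument as written would ``prove'' that the conforming $\mathcal{P}_1$ space with homogeneous boundary values is trivial. What actually rescues the lemma is that $\|\cdot\|_h$ also penalizes the jumps of the normal derivatives: $\|v\|_h=0$ forces $\jump{\partial_n v}=0$ on every interior edge and interface segment (the $\sigma_u$ terms over $\mathring{\mathcal{E}}_h^n$, $\mathring{\mathcal{E}}_h^i$, and $\Gamma_T$), and combined with $\jump{v}=0$ (hence $\jump{\partial_t v}=0$ along each edge) this makes $\nabla v$ single-valued across every edge. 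All the affine pieces therefore share one gradient and, by connectedness and continuity, one constant; only then does $v|_{\partial\Omega}=0$ give $v\equiv 0$. This is precisely the step the paper records as ``the jumps of normal derivatives vanish, indicating $v$ is globally linear,'' and it is the one ingredient your proposal never invokes.
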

\begin{proof}
	It is clear that $\|\cdot\|_h$  defines a semi-norm, so it remains to show positivity. Let $v \in S_h^p$ and assume $\|v\|_h=0$. Then for each $T$ for $\mathcal{T}_h^n$ and each each subelement $T^{\pm}$ with $\mathcal{T}_h^i$, we have $\| \nabla^2v\|^2=0$. Hence, $v$ is piecewise linear. By the definition of $\|\cdot\|_h$, the jump of $v$ vanish across all non-interface edges, interface edges $e \in \mathcal{E}_h^i$, and interface segments $\Gamma_T$. Thus, $v$ is continuous across the entire domain $\Omega$. 
	 Similarly, the jumps of normal derivatives vanish, indicating $v$ is globally linear. Since $v|_{\partial \Omega}=0$ and $\Omega$ is a polygon domain, it follows that  $v \equiv 0$. Hence $\|\cdot\|_h$ is a norm on $S_h^p$.
\end{proof}

Next we prove the trace inequality for IFE spaces. We first recall the standard trace inequality for a $k$th-degree polynomial $v$ on a triangle $T$, with edge $e$ \cite{warburton2003constants}:
\begin{equation}
\|v\|_{L^{2}(e)} \leqslant \sqrt{\frac{(k+1)(k+2)}{2} \frac{|e|}{|T|}}\|v\|_{L^{2}(T)}.
\end{equation}
Building on this estimate, we derive trace inequalities tailored to IFE functions on interface elements/edges.
\begin{lemma}
	\label{lemma:weaktrace}
	Let $e\in \mathcal{E}_h^i$ be an interface edge with neighboring element $T_e^1$ and $T_e^2$. Suppose $v$ is a polynomial of degree $k$. Then for $s=+,-$, there exists a constant $C$, independent of interface location and $h$, such that the following estimate holds for at least one of $i\in\{1,2\}$:
	\begin{equation}
		\begin{split}
			\|v\|_{L^2(e^s)} \leq C|e^s|^{-1/2}\|v\|_{L^2(T_e^{i,s})}.
		\end{split}
	\end{equation}
\end{lemma}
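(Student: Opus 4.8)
The statement is a purely geometric/polynomial fact: we want to relate the $L^2$-norm of a polynomial $v$ over the curved piece $e^s = e \cap \Omega^s$ of an interface edge to its $L^2$-norm over the corresponding subelement $T_e^{i,s} = T_e^i \cap \Omega^s$. The plan is to reduce this to the standard polynomial trace inequality on a \emph{full} triangle (quoted above from \cite{warburton2003constants}) by inscribing a suitable auxiliary triangle, and to handle the degenerate small-cut situation by the pigeonhole-type ``at least one of $i\in\{1,2\}$'' escape hatch. First I would fix $s$ and consider the edge $e$ together with its two neighbors $T_e^1,T_e^2$. The curve $\Gamma$ meets $e$ at (at most) one point under assumptions (H1)--(H2), splitting $e$ into $e^+$ and $e^-$; without loss of generality $|e^s|\ge |e|/2$ for whichever sign makes $e^s$ the ``larger'' half, but more importantly, for the chosen sign $s$ at least one neighbor $T_e^i$ has $|T_e^{i,s}|$ bounded below by a fixed fraction of $|T_e^i|$ — this is where we use that $h < r/2$ and the interface is $C^2$, so within each element $\Gamma$ is nearly straight and cannot cut off an arbitrarily large portion of \emph{both} neighbors on the same side. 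This is exactly the mechanism that forces the ``at least one of $i$'' qualifier.

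Having selected such an $i$, the key step is to build a full triangle $\widehat{T} \subseteq T_e^{i,s}$ that still contains enough of $e^s$. Concretely: since $\Gamma\cap T_e^i$ is a $C^2$ arc that, after the $h<r/2$ restriction, lies within $O(h^2)$ of its chord, I would take $\widehat{T}$ to be the triangle formed by the two endpoints of $e^s$ (one being a vertex of $T_e^i$ or the intersection point $\Gamma\cap e$, the other being an endpoint of $e$) together with a third vertex chosen inside $T_e^{i,s}$ so that $\widehat T\subset T_e^{i,s}$ and $|\widehat T|\gtrsim |e^s|\,h_{T_e^i}$ with a constant depending only on shape-regularity. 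Then $e^s$ is an edge of $\widehat T$ (or contains one), so the standard trace inequality gives
\begin{equation*}
\|v\|_{L^2(e^s)} \le \sqrt{\tfrac{(k+1)(k+2)}{2}\,\tfrac{|e^s|}{|\widehat T|}}\;\|v\|_{L^2(\widehat T)} \le C\,|e^s|^{-1/2}\,\|v\|_{L^2(T_e^{i,s})},
\end{equation*}
where in the last step $|e^s|/|\widehat T| \le C/(|e^s| \cdot c\, h_{T_e^i}/|e^s|)$... more carefully, $|\widehat T| \ge c\,|e^s|^2$ using $h_{T_e^i}\sim |e^s|$ only when $e^s$ is not tiny; the cleaner route is $|\widehat T|\gtrsim |e^s|\cdot\mathrm{dist}(\text{third vertex},e)\gtrsim |e^s|\cdot h$, giving $|e^s|/|\widehat T|\lesssim 1/h \le C/|e^s|$ since $|e^s|\le |e|\le h$. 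Monotonicity $\|v\|_{L^2(\widehat T)}\le\|v\|_{L^2(T_e^{i,s})}$ closes the estimate.

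\textbf{Main obstacle.} The delicate point is constructing $\widehat T$ with area bounded below \emph{uniformly in the interface position} — i.e. independently of how close $\Gamma\cap e$ is to a vertex of $e$. When $|e^s|$ is small the inscribed triangle is necessarily thin in one direction, and one must exploit that it can still have the full height $\sim h_T$ transverse to $e$; verifying $\widehat T\subset T_e^{i,s}$ then requires the curvature bound (the arc stays on the correct side of a line at distance $\sim h^2\ll h$). The companion subtlety is proving the dichotomy that the ``good'' neighbor $i$ always exists: if both $T_e^{1,s}$ and $T_e^{2,s}$ were vanishingly small, the point $\Gamma\cap e$ would be within $o(h)$ of \emph{both} endpoints of $e$ on the $s$-side simultaneously, forcing $|e^s|\to 0$, but then $\Gamma$ would have to turn sharply between the two triangles, contradicting the $C^2$ bound together with $h<r/2$. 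I would make this precise by a direct geometric argument on the angle of $\Gamma$'s chord within each neighbor. Everything else — the appeal to \cite{warburton2003constants} and the scaling bookkeeping — is routine.
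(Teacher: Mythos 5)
Your high-level strategy---inscribe a full triangle with base $e^s$ inside one of the two subelements, apply the polynomial trace inequality of \cite{warburton2003constants} on it, and finish by monotonicity of the $L^2$ norm---is the same as the paper's. But your quantitative construction has a genuine gap. You aim for a triangle $\widehat T\subset T_e^{i,s}$ with base $e^s$ and height $\sim h_T$, hence $|\widehat T|\gtrsim |e^s|\,h$, and you argue that if this fails for both neighbors then $\Gamma$ would have to ``turn sharply,'' contradicting smoothness. That dichotomy is false: a perfectly smooth interface (e.g.\ a circle of radius comparable to $r$) can cut off a small cap of diameter $\delta\ll h$ around the mesh vertex $A$ shared by $e$, $T_e^1$ and $T_e^2$. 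Then $e^s=[A,P]$ has $|e^s|\sim\delta$ and \emph{both} $T_e^{1,s}$ and $T_e^{2,s}$ are contained in a ball of radius $O(\delta)$ around $A$, so no inscribed triangle of height $\sim h$ exists on either side, and indeed $|T_e^{i,s}|\sim\delta^2\ll|e^s|\,h$. Your remark that the thin triangle ``can still have the full height $\sim h_T$ transverse to $e$'' is exactly what breaks down in this configuration.

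The fix---and the paper's actual argument, following Lemma 6 of \cite{guzman2017finite}---is to take the inscribed triangle \emph{isosceles on the base $e^s$ with a fixed base angle} $\tilde\alpha=\min\{\underline\alpha,\pi/4\}$, so that its height is $\tfrac12|e^s|\tan\tilde\alpha$ and its area is $\tfrac14|e^s|^2\tan\tilde\alpha$. This area is weaker than $|e^s|\,h$ but still yields $|e^s|/|\widehat T|\lesssim|e^s|^{-1}$, which is all the stated bound requires (the target is $|e^s|^{-1/2}$, not $h^{-1/2}$). Such a small triangle does fit in at least one of the two subelements: at the crossing point $P=\Gamma\cap e$, the two wedges of opening $\tilde\alpha\le\pi/4$ on the two sides of $e$ make supplementary angles with the tangent line of $\Gamma$ at $P$, so at least one of them lies in the correct half-plane; the $C^2$ bound together with $h<r/2$ controls the $O(h^2)$ deviation of $\Gamma$ from its tangent over a triangle of diameter $O(|e^s|)$, and the choice $\tilde\alpha\le\underline\alpha$ keeps the triangle inside the element near the vertex $A$. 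In other words, the ``at least one of $i\in\{1,2\}$'' qualifier is resolved by an angle/pigeonhole argument at $P$, not by a lower bound on $|T_e^{i,s}|$, and your proposed mechanism for it does not survive the small-cap case.
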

\begin{proof}
	Without loss of generality, consider the isosceles subtriangle $\tilde{T}_1 \subset T_e^{1,s}$ with base edge $e^s$ and base angle $\tilde{\alpha}=\min\{\underline{\alpha},\pi/4\}$, as constructed in the proof of Lemma 6 in \cite{guzman2017finite} (see Figure \ref{fig:trace}). The existence of such a subtriangle is guaranteed by Lemma \ref{lemma:rt}. Here, $\underline{\alpha}$ is the minimum nonzero angle of the triangulation whose existence is ensured by the shape regularity assumption on $\mathcal{T}_h$. Clearly, $\tilde \alpha$ is independent of the interface location. Applying the standard trace inequality on $\tilde{T}_1$, we obtain
	\begin{equation}
			\|v\|_{L^{2}(e^s)} \leqslant \sqrt{\frac{(k+1)(k+2)}{2} \frac{|e^s|}{|\tilde{T}_1|}}\|v\|_{L^{2}(\tilde{T} _1)} = 
			\sqrt{\frac{2(k+1)(k+2)}{|e^s|\tan\tilde{\alpha}}}\|v\|_{L^{2}(T_e^{1,s})}.
	\end{equation}
	The equality is due to $|\tilde{T}_1| = \frac{1}{\YCrev{4}}|e^s|^2\tan(\tilde{ \alpha})$. Setting $C = \sqrt{\frac{2(k+1)(k+2)}{\tan\tilde{\alpha}}}$, we obtain the estimate. The constant $C$ depends only on the polynomial degree $k$ and shape-regularity constant.
\end{proof}

\begin{figure}[htbp]
 \centering
 \includegraphics[width=.5\textwidth]{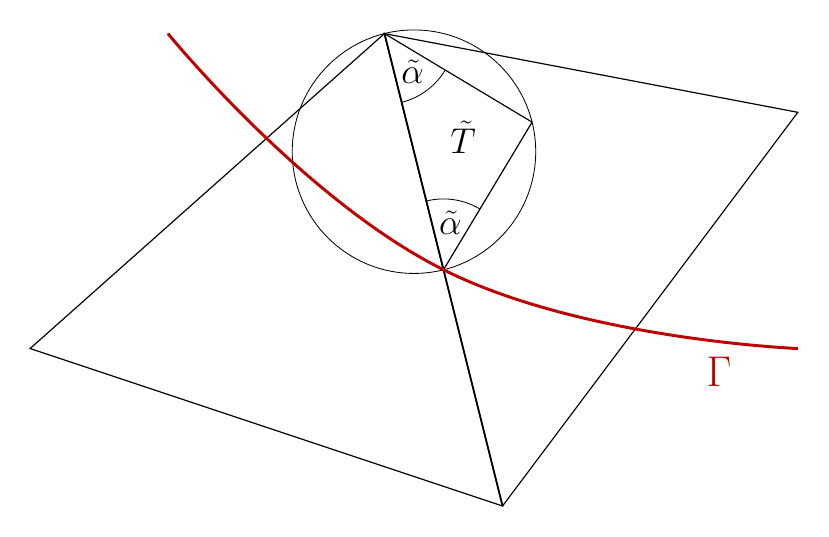}
 \caption{An illustration of triangular $\tilde T$}
 \label{fig:trace}
\end{figure}

Then we derive the following estimate for the symmetric term of \eqref{equ:scheme} on interface edge.
\begin{lemma}
	\label{lemma:symmbound}
	Let $u,v\in S_h^p$. Then
	\begin{equation}
		\begin{split}
			&\bigg| \sum_{e\in \mathring{\mathcal{E}}_h^i}\int_{e} \aver{\beta\partial_{nn} u }\jump{\partial_n v} d s \bigg| \\ 
			\leq &\frac{1}{8}\bigg(\sum_{T\in \mathcal{T}_h}  \beta\| \nabla^2u\|^2_{L^2(T)} +\sum_{e\in \mathring{\mathcal{E}}_h^i} \beta^+|e^+|\mynorm{\jump{\partial_{nn} u }}^2_{L^2(e^+)}+\sum_{e\in \mathring{\mathcal{E}}_h^i} \beta^-|e^-|\mynorm{\jump{\partial_{nn} u }}^2_{L^2(e^-)} \bigg)\\
			&+C\bigg(\sum_{e\in \mathring{\mathcal{E}}_h^i}\frac{\beta^+}{|e^+|}\mynorm{\jump{\partial_n v}}^2_{L^2(e^+)}+\sum_{e\in \mathring{\mathcal{E}}_h^i}\frac{\beta^-}{|e^-|}\mynorm{\jump{\partial_n v}}^2_{L^2(e^-)} \bigg),
		\end{split}	
	\end{equation}
where $C>0$ is independent of the interface location and the mesh size $h$. 
\end{lemma}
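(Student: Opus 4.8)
The plan is to reduce the edge sum to a single interface edge, split that edge's integral into its two subsegments $e^{+}$ and $e^{-}$, estimate the average flux $\aver{\beta\partial_{nn}u}$ on each subsegment by the bulk energy on \emph{one} adjacent subtriangle plus a controllable jump term via the IFE trace inequality of Lemma~\ref{lemma:weaktrace}, and close with Young's inequality balanced against the $\beta$-weights.

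First I would fix $e\in\mathring{\mathcal{E}}_h^i$ with neighbours $T_e^1,T_e^2$; since $\Gamma$ crosses $e$, both are interface elements, so $u|_{T_e^{i,s}}$ is a polynomial (of degree at most $p$) for $s=\pm$. Cauchy--Schwarz on $e^{+}$ and $e^{-}$ separately gives
\[
\Big|\int_e \aver{\beta\partial_{nn}u}\jump{\partial_n v}\,ds\Big|
\le \sum_{s=+,-}\|\aver{\beta\partial_{nn}u}\|_{L^2(e^s)}\,\|\jump{\partial_n v}\|_{L^2(e^s)}.
\]
On $e^s$ the two one-sided traces of $\beta$ both equal $\beta^s$, hence $\aver{\beta\partial_{nn}u}|_{e^s}=\tfrac{\beta^s}{2}(w_1+w_2)$ with $w_i:=\partial_{nn}(u|_{T_e^{i,s}})|_{e^s}$, each a polynomial trace of degree at most $p-2$. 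By Lemma~\ref{lemma:weaktrace} there is an index $i\in\{1,2\}$, determined by the mesh and the interface position alone, with $\|w_i\|_{L^2(e^s)}\le C|e^s|^{-1/2}\|w_i\|_{L^2(T_e^{i,s})}\le C|e^s|^{-1/2}\|\nabla^2u\|_{L^2(T_e^{i,s})}$; for the remaining index $j$ one writes $\|w_j\|_{L^2(e^s)}\le \|w_i\|_{L^2(e^s)}+\|w_i-w_j\|_{L^2(e^s)}$ and notes that $w_i-w_j=\pm\jump{\partial_{nn}u}|_{e^s}$, so that
\[
\|\aver{\beta\partial_{nn}u}\|_{L^2(e^s)}\le \beta^s\Big(C|e^s|^{-1/2}\|\nabla^2u\|_{L^2(T_e^{i,s})}+\tfrac12\|\jump{\partial_{nn}u}\|_{L^2(e^s)}\Big).
\]

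Then I would substitute this bound, write each product in the $\beta$-balanced form $\big(\sqrt{\beta^s}\,a\big)\big(\sqrt{\beta^s}\,b\big)$, and apply $ab\le \varepsilon a^2+(4\varepsilon)^{-1}b^2$ twice, pairing $\sqrt{\beta^s}\|\nabla^2u\|_{L^2(T_e^{i,s})}$ and $\sqrt{\beta^s}|e^s|^{1/2}\|\jump{\partial_{nn}u}\|_{L^2(e^s)}$ against $\sqrt{\beta^s}|e^s|^{-1/2}\|\jump{\partial_n v}\|_{L^2(e^s)}$; this produces exactly the weighted quantities $\beta\|\nabla^2u\|^2$, $\beta^s|e^s|\|\jump{\partial_{nn}u}\|^2$ and $\beta^s|e^s|^{-1}\|\jump{\partial_n v}\|^2$ appearing in the statement. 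Summing over $s=\pm$ and $e\in\mathring{\mathcal{E}}_h^i$, the bulk contributions collapse into $\sum_{T\in\mathcal{T}_h}\beta\|\nabla^2u\|^2_{L^2(T)}$ up to a finite-overlap constant, since each subtriangle $T_e^{i,s}$ is selected by at most a uniformly bounded number of edge/side pairs; the jump contributions are already in the claimed form. Choosing $\varepsilon$ small enough that the accumulated prefactor of the first group does not exceed $\tfrac18$ finishes the proof, with $C$ depending only on $p$ and the shape-regularity constant.

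The main obstacle is precisely the asymmetry in Lemma~\ref{lemma:weaktrace}: an $h$- and interface-independent trace inequality is available on only one of the two elements sharing $e$, because the subelement $T^s$ of the other neighbour may be an arbitrarily thin sliver. The passage from the ``good'' trace to the ``bad'' one must therefore go through $\jump{\partial_{nn}u}$, and the scaling $\beta^s|e^s|$ attached to that jump is exactly what is needed for it to be absorbed by the term $\beta^s|e^s|^{-1}\|\jump{\partial_n v}\|^2$ through Young's inequality --- which is the structural reason the $\partial_{nn}u$ penalty is placed on interface edges in $J_{h,n}$. Everything else (Cauchy--Schwarz, Young's inequality, and the overlap count) is routine bookkeeping of the $\beta$-weights.
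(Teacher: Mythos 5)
Your argument is correct and follows essentially the same route as the paper: the paper writes $\aver{\beta\partial_{nn}u}=\beta\partial_{nn}u|_{T_e^2}+\tfrac12\jump{\beta\partial_{nn}u}$ on each subedge ($Q_1+Q_2$), applies Lemma~\ref{lemma:weaktrace} to the one-sided trace on the favorable neighbor, and closes with Cauchy--Schwarz and Young, which is exactly your decomposition of $\tfrac12(w_1+w_2)$ into the ``good'' trace plus half the jump. The only cosmetic difference is that you apply Cauchy--Schwarz before splitting the average rather than after, and you make the finite-overlap count on the bulk terms explicit where the paper absorbs it silently into the constant.
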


\begin{proof}
	On the subedge $e^+$, we have the following equality:
	\begin{equation}\label{eq: trace nn}
		\begin{split}
			 \sum_{e\in \mathring{\mathcal{E}}_h^i}\int_{e^+} \aver{\beta\partial_{nn} u }\jump{\partial_n v} d s 
		   = & \sum_{e\in \mathring{\mathcal{E}}_h^i}\int_{e^+} \bigg(\beta\partial_{nn} u  \big|_{T_e^2} + \frac{1}{2}\jump{\beta\partial_{nn} u }\bigg)\jump{\partial_n v} d s\\
		   = &  \sum_{e\in \mathring{\mathcal{E}}_h^i}\int_{e^+} \beta\partial_{nn} u  \big|_{T_e^2} \jump{\partial_n v} d s + \frac{1}{2}\sum_{e\in \mathring{\mathcal{E}}_h^i}\int_{e^+}  \jump{\beta\partial_{nn} u }\jump{\partial_n v} d s  =: Q_1 + Q_2 . \\
		\end{split}
	\end{equation}
	Without loss of generality, choose $T_e^{2,+}$ to be the subelement satisfying Lemma \ref{lemma:weaktrace}. Then by Cauchy-Schwarz inequality, we have
		\begin{equation}
		\begin{split}
			|Q_1| \leq \bigg(\sum_{e\in \mathring{\mathcal{E}}_h^i} \beta^+|e^+|\mynorm{\partial_{nn} u  \big|_{T_e^2}}^2_{L^2(e^+)} \bigg)^{\frac{1}{2}}\bigg(\sum_{e\in \mathring{\mathcal{E}}_h^i}\frac{\beta^+}{|e^+|}\mynorm{\jump{\partial_n v}}^2_{L^2(e^+)} \bigg)^{\frac{1}{2}}.
		\end{split}
	\end{equation}
	Applying Lemma \ref{lemma:weaktrace} on  $T_e^{2,+}$, we have
	\begin{equation}
		\label{equ:Q1bound}
		\begin{split}
			|Q_1|\leq & C\bigg(\sum_{e\in \mathring{\mathcal{E}}_h^i} \beta^+\mynorm{\partial_{nn} u }^2_{L^2(T_e^{2,+})} \bigg)^{\frac{1}{2}}\bigg(\sum_{e\in \mathring{\mathcal{E}}_h^i}\frac{\beta^+}{|e^+|}\mynorm{\jump{\partial_n v}}^2_{L^2(e^+)} \bigg)^{\frac{1}{2}}\\
				\leq & C\bigg(\sum_{T\in \mathcal{T}_h^i} \beta^+\| \nabla^2u\|^2_{L^2(T^+)} \bigg)^{\frac{1}{2}}\bigg(\sum_{e\in \mathring{\mathcal{E}}_h^i}\frac{\beta^+}{|e^+|}\mynorm{\jump{\partial_n v}}^2_{L^2(e^+)} \bigg)^{\frac{1}{2}}.
		\end{split}
	\end{equation}
	By Young's inequality $ab\le \frac{1}{8}a^2+2b^2$, we have
	\begin{equation}
		|Q_1| \leq \frac{1}{8}\bigg(\sum_{T\in \mathcal{T}_h^i} \beta^+\| \nabla^2u\|^2_{L^2(T^+)} \bigg)+2C^2\bigg(\sum_{e\in \mathring{\mathcal{E}}_h^i}\frac{\beta^+}{|e^+|}\mynorm{\jump{\partial_n v}}^2_{L^2(e^+)} \bigg).	
	\end{equation}
	For $Q_2$, Cauchy-Schwarz and Young's inequality give
	\begin{equation}
		\label{equ:Q2bound}
		\begin{split}
			|Q_2| &\leq \frac{1}{2} \bigg(\sum_{e\in \mathring{\mathcal{E}}_h^i} \beta^+|e^+|\mynorm{\jump{\partial_{nn} u }}^2_{L^2(e^+)} \bigg)^{\frac{1}{2}}\bigg(\sum_{e\in \mathring{\mathcal{E}}_h^i}\frac{\beta^+}{|e^+|}\mynorm{\jump{\partial_n v}}^2_{L^2(e^+)} \bigg)^{\frac{1}{2}}\\
			      &\leq \frac{1}{8} \bigg(\sum_{e\in \mathring{\mathcal{E}}_h^i} \beta^+|e^+|\mynorm{\jump{\partial_{nn} u }}^2_{L^2(e^+)} \bigg)+\frac{1}{2}\bigg(\sum_{e\in \mathring{\mathcal{E}}_h^i}\frac{\beta^+}{|e^+|}\mynorm{\jump{\partial_n v}}^2_{L^2(e^+)} \bigg).
		\end{split}
	\end{equation}
	Combine estimates of $Q_1$, $Q_2$, we have:
	\begin{equation}
		\begin{split}
			\bigg| \sum_{e\in \mathring{\mathcal{E}}_h^i}\int_{e^+} \aver{\beta\partial_{nn} u }\jump{\partial_n v} d s \bigg| 
			\leq & \frac{1}{8}\bigg(\sum_{T\in \mathcal{T}_h^i} \beta^+\| \nabla^2u\|^2_{L^2(T^+)} +\sum_{e\in \mathring{\mathcal{E}}_h^i} \beta^+|e^+|\mynorm{\jump{\partial_{nn} u }}^2_{L^2(e^+)} \bigg)\\
			&+(2C^2+\frac{1}{2})\bigg(\sum_{e\in \mathring{\mathcal{E}}_h^i}\frac{\beta^+}{|e^+|}\mynorm{\jump{\partial_n v}}^2_{L^2(e^+)} \bigg).
		\end{split}	
	\end{equation}
	An analogous estimate holds on $e^-$. Adding the two sides, we obtain the estimate \eqref{eq: trace nn}. 
\end{proof}

We have a similar estimate for the edges $e\in \mathring{\mathcal{E}}_h^n \cap \mathcal{F}_h^i$ neighbored by a non-interface element and interface element. This result is summarized in the following Lemma.
\begin{lemma}
	\label{lemma:symmbound2}
	Let $u,v\in S_h^p$. Then
	\begin{equation}\label{eq: lemma sym bound2}
		\begin{split}
			\bigg| \sum_{e\in \mathring{\mathcal{E}}_h^n \cap \mathcal{F}_h^i}\int_{e} \aver{\beta\partial_{nn} u }\jump{\partial_n v} d s \bigg|  
			\leq &\frac{1}{8}\bigg(\sum_{T\in \mathcal{T}_h}  \beta\| \nabla^2u\|^2_{L^2(T)} +\sum_{e\in \mathring{\mathcal{E}}_h^n \cap \mathcal{F}_h^i} \beta|e|\mynorm{\jump{\partial_{nn} u }}^2_{L^2(e)}\bigg)\\
			&+C\sum_{e\in \mathring{\mathcal{E}}_h^n}\frac{\beta}{|e|}\mynorm{\jump{\partial_n v}}^2_{L^2(e)}.
		\end{split}	
	\end{equation}
	where $C>0$ is independent of the interface location and the mesh size $h$.
\end{lemma}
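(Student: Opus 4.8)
The plan is to mirror the proof of Lemma~\ref{lemma:symmbound}; the one ingredient it lacks is a trace inequality valid on a non-interface edge $e$ of an interface element, with constant independent of how $\Gamma$ cuts the element. So the first step is to show that for every $e\in\mathring{\mathcal{E}}_h^n\cap\mathcal{F}_h^i$, writing $s\in\{+,-\}$ for the side with $e\subset\overline{\Omega^s}$, at least one of the two elements $T_e^i$ sharing $e$ satisfies
\[
\mynorm{w}_{L^2(e)}\le C|e|^{-1/2}\mynorm{w}_{L^2(T_e^{i,s})}\qquad\text{for every polynomial }w\text{ of degree}\le p,
\]
with $C$ independent of the interface location and of $h$, where $T_e^{i,s}:=T_e^i\cap\Omega^s$ (so $T_e^{i,s}=T_e^i$ when $T_e^i\in\mathcal{T}_h^n$). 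If one of the neighbours is a non-interface element, this is just the standard polynomial trace inequality on that whole triangle. If both neighbours are interface elements, recall that $\Gamma$ never meets the open segment $e$ and that $h<r/2$; then the interface cannot penetrate, from inside both $T_e^1$ and $T_e^2$, the isosceles triangle with base $e$ and base angle $\min\{\underline{\alpha},\pi/4\}$ that appears in Lemma~\ref{lemma:weaktrace} (whose existence rests on Lemma~\ref{lemma:rt}); hence this subtriangle lies in $T_e^{i,s}$ for at least one $i$, and the inequality follows exactly as in Lemma~\ref{lemma:weaktrace}, with $C$ depending only on $p$ and the shape-regularity constant \cite{guzman2017finite}.

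Granting this, for each $e$ relabel the neighbour furnished above as $T_e^2$. Since $\beta$ equals a single value $\beta^s$ on both sides of $e$, we write $\aver{\beta\partial_{nn}u}\big|_e=\beta\,\partial_{nn}u\big|_{T_e^2}+\tfrac12\jump{\beta\partial_{nn}u}$, so the left-hand side of \eqref{eq: lemma sym bound2} equals $Q_1+Q_2$ with
\[
Q_1=\sum_{e\in\mathring{\mathcal{E}}_h^n\cap\mathcal{F}_h^i}\int_e\beta\,\partial_{nn}u\big|_{T_e^2}\,\jump{\partial_n v}\,ds,
\qquad
Q_2=\tfrac12\sum_{e\in\mathring{\mathcal{E}}_h^n\cap\mathcal{F}_h^i}\int_e\jump{\beta\partial_{nn}u}\,\jump{\partial_n v}\,ds.
\]
For $Q_1$, apply the Cauchy--Schwarz inequality on each edge and then over the sum, use the trace inequality of the first step on $T_e^{2,s}$, the pointwise bound $|\partial_{nn}u|\le\|\nabla^2u\|$, and the fact that each element of $\mathcal{T}_h$ is selected as $T_e^2$ for only boundedly many edges; Young's inequality $ab\le\tfrac18a^2+2b^2$, with the $\nabla^2u$-factor in the role of $a$, then gives
\[
|Q_1|\le\frac18\sum_{T\in\mathcal{T}_h}\beta\|\nabla^2u\|^2_{L^2(T)}+C\sum_{e\in\mathring{\mathcal{E}}_h^n}\frac{\beta}{|e|}\mynorm{\jump{\partial_n v}}^2_{L^2(e)},
\]
the finite overlap being absorbed into $C$. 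For $Q_2$, the Cauchy--Schwarz and Young inequalities applied directly give
\[
|Q_2|\le\frac18\sum_{e\in\mathring{\mathcal{E}}_h^n\cap\mathcal{F}_h^i}\beta|e|\mynorm{\jump{\partial_{nn}u}}^2_{L^2(e)}+C\sum_{e\in\mathring{\mathcal{E}}_h^n}\frac{\beta}{|e|}\mynorm{\jump{\partial_n v}}^2_{L^2(e)}.
\]
In both bounds the index set of the $\jump{\partial_n v}$-sum has been enlarged from $\mathring{\mathcal{E}}_h^n\cap\mathcal{F}_h^i$ to $\mathring{\mathcal{E}}_h^n$; adding $|Q_1|$ and $|Q_2|$ yields \eqref{eq: lemma sym bound2}.

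The only real difficulty is the trace inequality of the first step: the sub-element $T_e^{i,s}$ can be an arbitrarily thin sliver along $e$, so no cut-uniform trace inequality holds on both neighbours in general. The point that rescues the argument is purely geometric --- because $\Gamma$ does not cross the open edge $e$, it cannot come close to $e$ from inside both adjacent elements, so the standard isosceles-subtriangle construction of \cite{guzman2017finite} succeeds on at least one side. Everything else is the same Cauchy--Schwarz/Young bookkeeping already carried out for Lemma~\ref{lemma:symmbound}.
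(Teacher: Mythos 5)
Your main line of attack is exactly the paper's: write $\aver{\beta\partial_{nn}u}\big|_e=\beta\partial_{nn}u\big|_{T_e^2}+\tfrac12\jump{\beta\partial_{nn}u}$, bound the first piece by a trace inequality into $\sum_T\beta\|\nabla^2u\|^2_{L^2(T)}$ and the second by Cauchy--Schwarz against the $|e|$-weighted jump term, then finish with Young's inequality and enlarge the $v$-sum to all of $\mathring{\mathcal{E}}_h^n$ (the paper calls the two pieces $Q_3$ and $Q_4$). The bookkeeping --- finite overlap of the choice of $T_e^2$, the pointwise bound $|\partial_{nn}u|\le\|\nabla^2u\|$, the fact that $\beta$ is single-valued on $e$ --- is all fine.

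The one place you diverge is the justification of the trace inequality, and that is also where your argument has a soft spot. The paper simply asserts that for $e\in\mathring{\mathcal{E}}_h^n\cap\mathcal{F}_h^i$ at least one of the two neighbours is a \emph{non-interface} element and applies the standard polynomial trace inequality on that whole triangle; you instead allow both neighbours to be cut and claim that, since $\Gamma$ does not cross the open edge $e$, the isosceles subtriangle with base $e$ survives uncut on at least one side. That claim is not proved, and the stated reason (``$\Gamma$ cannot come close to $e$ from inside both adjacent elements'') does not address the real obstruction, which sits at the \emph{endpoints} of $e$: the interface can clip a small corner of each of the two isosceles subtriangles near a shared vertex of $e$ without ever meeting the open edge $e$, in which case neither subtriangle is contained in $T_e^{i,s}$ and the Guzm\'an--S\'anchez--Sarkis construction fails on both sides as you have arranged it. To close this you would need either a genuine geometric lemma (using $h<r/2$ and \textbf{(H1)}--\textbf{(H2)}) ruling out that both neighbours of an uncut edge are interface elements --- which is in effect what the paper's unproved assertion amounts to --- or a modified trace argument (e.g.\ a shrunken base $e'\subset e$ avoiding the clipped corner plus the equivalence $\mynorm{w}_{L^2(e)}\le C\mynorm{w}_{L^2(e')}$ for polynomials of fixed degree when $|e'|\ge c|e|$). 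With either of those supplied, your proof is complete and coincides with the paper's.
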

\begin{proof}
	For $e\in \mathring{\mathcal{E}}_h^n \cap \mathcal{F}_h^i$, at least one neighborhood element is a non-interface element. Without loss of generality, assume $T_e^2$ is non-interface. Then
	\begin{equation}
		\begin{split}
			 &\sum_{e\in \mathring{\mathcal{E}}_h^n \cap \mathcal{F}_h^i}\int_{e} \aver{\beta\partial_{nn} u }\jump{\partial_n v} d s 
			 = \sum_{e\in \mathring{\mathcal{E}}_h^n \cap \mathcal{F}_h^i}\int_{e} \bigg(\beta\partial_{nn} u  \big|_{T_e^2} \bigg)\jump{\partial_n v} d s + \frac{1}{2}\sum_{e\in \mathring{\mathcal{E}}_h^n \cap \mathcal{F}_h^i}\int_{e}  \jump{\beta\partial_{nn} u }\jump{\partial_n v} d s =:Q_3+Q_4.\\
		\end{split}
	\end{equation}
	For $Q_3$, applying standard trace inequality similar to Lemma \ref{lemma:weaktrace} and summing over all elements gives
	\begin{equation}\label{equ:Q3bound}
	|Q_3|\leq  C\bigg(\sum_{T\in \mathcal{T}_h} \beta\| \nabla^2u\|^2_{L^2(T)} \bigg)^{\frac{1}{2}}\bigg(\sum_{e\in \mathring{\mathcal{E}}_h^n \cap \mathcal{F}_h^i}\frac{\beta}{|e|}\mynorm{\jump{\partial_n v}}^2_{L^2(e)} \bigg)^{\frac{1}{2}}.
	\end{equation}
	For $Q_4$, Cauchy-Schwarz inequality gives 
	\begin{equation}
		\label{equ:Q4bound}
			|Q_4| \leq  \frac{1}{2} \bigg(\sum_{e\in \mathring{\mathcal{E}}_h^n \cap \mathcal{F}_h^i} \beta|e|\mynorm{\jump{\partial_{nn} u }}^2_{L^2(e)} \bigg)^{\frac{1}{2}}\bigg(\sum_{e\in \mathring{\mathcal{E}}_h^n \cap \mathcal{F}_h^i}\frac{\beta}{|e|}\mynorm{\jump{\partial_n v}}^2_{L^2(e)} \bigg)^{\frac{1}{2}}.
	\end{equation}
	Applying Young's inequality to the bounds for $Q_3$ and $Q_4$ yields the estimate \eqref{eq: lemma sym bound2}.
\end{proof}

Next we  prove the continuity and coercivity for $a_h(u,v)$:
\begin{theorem}[Continuity]
	For all $u, v \in S_h^p$, there exists a constant $C$ independent with the interface location such that:
	\begin{equation}\label{eq: continuity}
		a_h(u,v) \leq C\mynorm{u}_h\mynorm{v}_h.
	\end{equation}
\end{theorem}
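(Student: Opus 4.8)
The plan is to expand $a_h(u,v) = A_h(u,v) + J_{h,u}(u,v) + J_{h,n}(u,v)$ and bound each group of terms separately by products of the appropriate pieces of $\|\cdot\|_h$, then collect everything with a single Cauchy–Schwarz in $\mathbb{R}^N$ over the finitely many summands. First I would handle $A_h(u,v) = \sum_{T} \int_T \beta \nabla^2 u : \nabla^2 v\,dX$, which is immediate from Cauchy–Schwarz on each $T$: it is bounded by $\big(\sum_T \beta\|\nabla^2 u\|_{L^2(T)}^2\big)^{1/2}\big(\sum_T \beta\|\nabla^2 v\|_{L^2(T)}^2\big)^{1/2}$, both factors being controlled by $\|u\|_h$ and $\|v\|_h$ respectively. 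Next, the ``easy'' penalty terms — the $\jump{\partial_n u}\jump{\partial_n v}$ terms on $\mathring{\mathcal{E}}_h^n$, on $\mathring{\mathcal{E}}_h^i$ (the $e^\pm$ pieces), and on $\Gamma_T$; the $\jump{\partial_{nn} u}\jump{\partial_{nn} v}$ terms on $\mathring{\mathcal{E}}_h^i$, on $\mathring{\mathcal{E}}_h^n\cap\mathcal{F}_h^i$; and the $\jump{u}\jump{v}$ terms on $\mathring{\mathcal{E}}_h^i$ and on $\Gamma_T$ — each is a symmetric bilinear form already appearing verbatim inside $\|\cdot\|_h^2$, so a termwise Cauchy–Schwarz in $L^2$ followed by the discrete Cauchy–Schwarz bounds each by the corresponding seminorm factors of $\|u\|_h\|v\|_h$ with constant $1$.

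The only terms that require real work are the two symmetric (consistency) terms in $J_{h,u}$: $-\sum_{e\in\mathring{\mathcal{E}}_h}\int_e \aver{\beta\partial_{nn}u}\jump{\partial_n v}\,ds$ and its transpose with $u,v$ swapped. I would split the edge sum $\mathring{\mathcal{E}}_h$ into three disjoint classes: genuinely non-interface edges far from $\Gamma$ (both neighbors non-interface, $e\notin\mathcal{F}_h^i$), edges in $\mathring{\mathcal{E}}_h^n\cap\mathcal{F}_h^i$, and interface edges $\mathring{\mathcal{E}}_h^i$. On the first class one uses the classical trace inequality of \cite{warburton2003constants} quoted above together with $|T|\sim h_T|e|$ from shape regularity to get $\int_e \aver{\beta\partial_{nn}u}\jump{\partial_n v}\,ds \lesssim \big(\sum \beta\|\nabla^2 u\|_{L^2(T)}^2\big)^{1/2}\big(\sum \frac{\beta}{|e|}\|\jump{\partial_n v}\|_{L^2(e)}^2\big)^{1/2}$, then Young's inequality. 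For the second and third classes this is \emph{exactly} Lemma~\ref{lemma:symmbound2} and Lemma~\ref{lemma:symmbound} respectively, which I may invoke directly: they produce the $\frac18$-times-$A_h$ term, the $\jump{\partial_{nn}u}$ terms now weighted by $|e|\beta$ (which is precisely the $\sigma_F$-penalty block of $\|u\|_h^2$, so one needs $\sigma_F$ of order one), and the $\frac{\beta}{|e|}\|\jump{\partial_n v}\|^2$ terms (the $\sigma_u$-penalty block of $\|v\|_h^2$). The same bounds apply with $u$ and $v$ interchanged for the transposed consistency term.

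Assembling, one gets $a_h(u,v) \le C\big(\|u\|_h^2 + \|v\|_h^2\big)$ after one crude application of Young, but to reach the stated bilinear form $a_h(u,v)\le C\|u\|_h\|v\|_h$ I would instead keep all intermediate estimates in Cauchy–Schwarz product form (never applying Young on the consistency terms at this stage — Young is only needed for coercivity later), so that every term is bounded by $C_i\,\alpha_i(u)\,\beta_i(v)$ where $\alpha_i(u)^2,\beta_i(v)^2$ are among the finitely many seminorm-squared pieces of $\|u\|_h^2,\|v\|_h^2$; a final discrete Cauchy–Schwarz over the fixed finite index set $i$ then yields $a_h(u,v)\le C\|u\|_h\|v\|_h$ with $C$ depending only on the polynomial degree $p$, the shape-regularity constant, the penalty parameters $\sigma_u,\sigma_n,\sigma_F$, and $\beta^\pm$ — in particular independent of $h$ and of the interface location, since Lemmas~\ref{lemma:weaktrace}–\ref{lemma:symmbound2} already absorbed that dependence. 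The main obstacle is purely bookkeeping: matching each generated term to the correct weighted norm block (especially tracking the $|e^\pm|$ versus $h_T$ scalings and the $\aver{\beta}$ versus $\beta^\pm$ weights) and ensuring the edge-class decomposition of $\mathring{\mathcal{E}}_h$ is exhaustive and disjoint so no term is double-counted.
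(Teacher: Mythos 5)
Your proposal is correct and follows essentially the same route as the paper: bound $A_h$ and all stabilization terms directly by Cauchy--Schwarz, and handle the symmetric consistency terms by splitting $\mathring{\mathcal{E}}_h$ into the three edge classes and invoking the product-form intermediate estimates \eqref{equ:Q1bound}--\eqref{equ:Q2bound} and \eqref{equ:Q3bound}--\eqref{equ:Q4bound} (rather than the Young-processed statements of Lemmas \ref{lemma:symmbound} and \ref{lemma:symmbound2}), exactly as the paper does. Your explicit remark about keeping Cauchy--Schwarz in product form to obtain the bilinear bound $C\mynorm{u}_h\mynorm{v}_h$ is a point the paper leaves implicit but handles the same way.
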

\begin{proof}
	It suffices to bound the symmetric terms, since the other terms follow directly by Cauchy-Schwarz inequality.
	For $e\in \mathcal{E}_h^n \backslash \mathcal{F}_h^i$, the standard estimate \cite{brenner2011c} together with trace and inverse inequalities gives
	\begin{equation}
		\bigg|\sum_{e\in \mathcal{E}_h^n \backslash \mathcal{F}_h^i} \int_e \aver{\beta\partial_{nn} u} \jump{\partial_n v}ds \bigg| \leq C \bigg(\sum_{T\in \mathcal{T}_h} \beta\| \nabla^2u\|^2_{L^2(T)}\bigg)^{\frac{1}{2}} \bigg( \sum_{e\in \mathring{\mathcal{E}}^n_h}\frac{\beta}{|e|}\left\|\jump{\partial_n v}\right\|^2 \bigg)^{\frac{1}{2}}\leq C \mynorm{u}_h\mynorm{v}_h.
	\end{equation}
	For $e\in \mathcal{E}_h^n \cap \mathcal{F}_h^i$, the bound follows from \eqref{equ:Q3bound} and \eqref{equ:Q4bound} . For $e \in \mathring{\mathcal{E}_h^i}$ we use \eqref{equ:Q1bound} and \eqref{equ:Q2bound}. 
	The stabilization terms in $J_{h,u}(u,v)$ and $J_{h,n}(u,v)$ are bounded by Cauchy-Schwarz inequality. Finally, for $A_h(u,v)$ we have
	\begin{equation}
		\bigg| A_h(u,v) \bigg|	 \leq \bigg(\sum_{T\in \mathcal{T}_h} \beta\| \nabla^2u\|^2_{L^2(T)} \bigg)^{\frac{1}{2}} \bigg(\sum_{T\in \mathcal{T}_h} \beta\| \nabla^2v\|^2_{L^2(T)} \bigg)^{\frac{1}{2}} \leq \mynorm{u}_h\mynorm{v}_h.
	\end{equation}
	Combining all estimates gives the continuous result \eqref{eq: continuity}.
\end{proof}

\begin{theorem}[Coercivity]
	For all $v \in S_h^p$ {, $\sigma_u$ sufficient large and $\sigma_F>0$,} there exists a constant $c$,, independent of the interface location, such that
	\begin{equation}
		a_h(v,v) \geq c \mynorm{v}^2_h.
	\end{equation}
\end{theorem}
\begin{proof}
	As before, we only need to estimate the symmetric terms. For $e\in \mathring{\mathcal{E}}_h^n \backslash \mathcal{F}_h^i$, the standard estimate yields
	\begin{equation}
		\begin{split}
			- \sum_{e\in \mathcal{E}_h^n \backslash \mathcal{F}_h^i}\int_{e} \aver{\beta\partial_{nn} v }\jump{\partial_n v} d s	 \geq & -C \bigg(\sum_{T\in \mathcal{T}_h}  \beta\| \nabla^2u\|^2_{L^2(T)}\bigg)^{\frac{1}{2}}\bigg( \sum_{e\in \mathring{\mathcal{E}}^n_h}\frac{\beta}{|e|}\left\|\jump{\partial_n v}\right\|^2 \bigg)^{\frac{1}{2}}\\
			\geq & -C\bigg[ \frac{1}{8C} \bigg(\sum_{T\in \mathcal{T}_h} \beta\| \nabla^2u\|^2_{L^2(T)}\bigg)+2C\bigg( \sum_{e\in \mathring{\mathcal{E}}^n_h}\frac{2\aver{\beta}_e}{|e|}\left\|\jump{\partial_n v}\right\|^2\bigg) \bigg]\\
		 \geq& -\frac{1}{8} \bigg(\sum_{T\in \mathcal{T}_h}  \beta\| \nabla^2u\|^2_{L^2(T)}\bigg)-C\bigg( \sum_{e\in \mathring{\mathcal{E}}^n_h}\frac{\aver{\beta}_e}{|e|}\left\|\jump{\partial_n v}\right\|^2 \bigg).
		\end{split}
	\end{equation}
	For $e\in \mathcal{E}_h^n \cap \mathcal{F}_h^i$, Lemma \ref{lemma:symmbound2} gives
	\begin{equation}
		\begin{split}
			&-\sum_{e\in \mathring{\mathcal{E}}_h^n \cap \mathcal{F}_h^i}\int_{e} \aver{\beta\partial_{nn} v }\jump{\partial_n v} d s  \\ 
			\geq &-\frac{1}{8}\bigg(\sum_{T\in \mathcal{T}_h} \beta\| \nabla^2v\|^2_{L^2(T)} +\sum_{e\in \mathring{\mathcal{E}}_h^n \cap \mathcal{F}_h^i} \beta|e|\mynorm{\jump{\partial_{nn} v }}^2_{L^2(e)}\bigg)-C\bigg(\sum_{e\in \mathring{\mathcal{E}}_h^n}\frac{\beta}{|e|}\mynorm{\jump{\partial_n v}}^2_{L^2(e)}\bigg).
		\end{split}	
	\end{equation}
	For $e\in \mathring{\mathcal{E}}_h^i$, we use Lemma \ref{lemma:symmbound} to get:
	\begin{equation}
		\begin{split}
			&-\sum_{e\in \mathring{\mathcal{E}}_h^i}\int_{e} \aver{\beta\partial_{nn} v }\jump{\partial_n v} d s  \\ 
			\geq & -\frac{1}{8}\bigg(\sum_{T\in \mathcal{T}_h}  \beta\| \nabla^2v\|^2_{L^2(T)} +\sum_{e\in \mathring{\mathcal{E}}_h^i} \beta^+|e^+|\mynorm{\jump{\partial_{nn} v }}^2_{L^2(e^+)}+\sum_{e\in \mathring{\mathcal{E}}_h^i} \beta^-|e^-|\mynorm{\jump{\partial_{nn} v }}^2_{L^2(e^-)} \bigg)\\
			&-C\bigg(\sum_{e\in \mathring{\mathcal{E}}_h^i}\frac{\beta^+}{|e^+|}\mynorm{\jump{\partial_n v}}^2_{L^2(e^+)}+\sum_{e\in \mathring{\mathcal{E}}_h^i}\frac{\beta^-}{|e^-|}\mynorm{\jump{\partial_n v}}^2_{L^2(e^-)}\bigg).
		\end{split}	
	\end{equation}
	Taking the generic $C$ to be the maximum in the above three estimate, we have
	\begin{equation}
		\begin{split}
			a_h(v,v) \geq &\frac{1}{4} \sum_{T\in \mathcal{T}_h}  \beta\| \nabla^2v\|^2_{L^2(T)} +(\sigma_u-2C) \sum_{e\in \mathring{\mathcal{E}}^n_h}\frac{\aver{\beta}_e}{|e|}\left\|\jump{\partial_n v}\right\|^2+ \sum_{T \in \mathcal{T}_h^i}\frac{\sigma_u \aver{\beta}_e}{|h_T|} \mynorm{\jump{\partial_n v}}^2_{L^2(\Gamma_T)}\\
			+&(\sigma_u-C)\bigg(\sum_{e\in \mathring{\mathcal{E}}_h^i}\frac{\beta^+}{|e^+|}\mynorm{\jump{\partial_n v}}^2_{L^2(e^+)}+\sum_{e\in \mathring{\mathcal{E}}_h^i}\frac{\beta^-}{|e^-|}\mynorm{\jump{\partial_n v}}^2_{L^2(e^-)} \bigg) \\
			+&(\sigma_F-\frac{1}{8}) \bigg( \sum_{e\in \mathring{\mathcal{E}}_h^n \cap \mathcal{F}_h^i} |e|\beta \mynorm{\jump{\partial_{nn} v }}^2_{L^2(e)}+\sum_{e\in \mathring{\mathcal{E}}_h^i} \beta^+|e^+|\mynorm{\jump{\partial_{nn} v }}^2_{L^2(e^+)}+\sum_{e\in \mathring{\mathcal{E}}_h^i} \beta^-|e^-|\mynorm{\jump{\partial_{nn} v }}^2_{L^2(e^-)} \bigg)\\
			+&\sum_{e\in \mathring{\mathcal{E}}^i_h}\frac{\sigma_n \aver{\beta}_e}{|e|^3 } \mynorm{ \jump{v}}^2_{L^2(e)}+
		 \sum_{T \in \mathcal{T}_h^i}\frac{\sigma_n \aver{\beta}_e}{h_T^3 } \mynorm{\jump{v}}^2_{L^2(\Gamma_T)}.
		\end{split}
	\end{equation}
	Choosing $\sigma_u=3C$, $\sigma_F=1$, and defining With $c=\min\{\frac{1}{4},2C\}$, we have the coercivity result.
\end{proof}
The well-posedness of our numerical scheme follows directly from the continuity and coercivity results, and the Lax-Milgram Theorem.


\section{Numerical Examples}
\label{sec:experiment}
In this section, we present three numerical examples to demonstrate the performance of the immersed $C^0$ interior penalty method. Throughout all computation, the computational domain is chosen as $\Omega = [-1,1]\times[-1,1]$. 

The interface-unfitted triangular mesh $\mathcal{T}_h$ is constructed as follows:  we first partition $\Omega$ into $N^2$ uniform squares with side length $h=2/N$, and then subdivide each square into two triangles by its diagonals with the negative slope. 

For the construction of the IFE shape functions, we set the parameters in the bilinear form \eqref{equ:bilinearform2} as 
 $\omega_0=\max\{\beta^+,\beta^-\}^2$ and $\omega_1=\omega_2=\omega_3=1$. Numerical quadratures on curve interface segments and curved subelements are carried out by suitable mappings onto line segment and standard triangles, respectively.

\begin{figure}[htb]
 \centering
 \includegraphics[width=.9\textwidth]{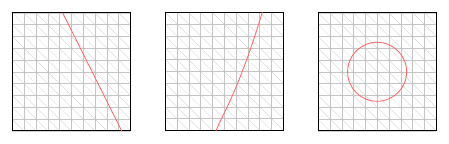}
 \caption{Illustration of interfaces used in Examples}
 \label{fig:interface}
\end{figure}

\subsection{Straight Line Interface}
\label{sec:line}
In this first example, we test the accuracy and consistency of proposed method with a straight line interface. The interface is represented by the level set
\[\Gamma=\{(x,y)\in \Omega:2x+y-c=0\},\]
which partitions the domain into $\Omega^+=\{(x,y)\in \Omega:2x+y-c>0\}$ and $\Omega^-=\{(x,y)\in \Omega:2x+y-c<0\}$. The exact solution is given by 
\begin{equation}
	\label{equ:lineex}
	u(x, y)=\left\{\begin{array}{ll}
	\frac{1}{\beta^-}{(2x+y-c)^2 \sin^2(\pi y)}, & x \in \Omega^{-} \\
	\frac{1}{\beta^+}{(2x+y-c)^2 \sin^2(\pi y)}, & x \in \Omega^{+}.
	\end{array}\right.
\end{equation}
In the following result, we take $c=\sqrt{0.5}$, see left plot of Figure \ref{fig:interface} for a visualization.

Tables \ref{tab:line_itp_p2} and \ref{tab:line_itp_p3} report the  interpolation errors of the $\mathcal{P}_2$ and $\mathcal{P}_3$ IFE spaces for the case $(\beta^-,\beta^+)=(1,100)$. Both spaces exhibit optimal approximation order of convergence:
\begin{equation}
\|\mathcal{I}_h^p u - u\|_0 + h|\mathcal{I}_h^p u - u|_1 + h^2|\mathcal{I}_h^p u - u|_2 \approx \mathcal{O}(h^{p+1}),~~~p=2,3. 
\end{equation}

Tables \ref{tab:line_sol_p2} and Table \ref{tab:line_sol_p3} display the numerical solution errors. For $p=2$, we observe optimal convergence in the $H^1$ and $H^2$-seminorms, and suboptimal convergence in $L^2$-norm, consistent with the known behavior of the $C^0$ interior penalty method for non-interface problems \cite{engel2002continuous}. For $p=3$, all norms convergence optimally, These results confirm that our immersed finite element spaces, combined with the proposed immersed $C^0$ interior penalty method, achieves the expected accuracy for the line interface case. 

\begin{table}[htbp]
\centering
\resizebox{.8\textwidth}{!}{
\begin{tabular}{ccccccc}
	\toprule
	$N$ & $||\mathcal{I}^p_hu-u||_{0}$ & order&$|\mathcal{I}^p_hu-u|_{1}$&order&$|\mathcal{I}^p_hu-u|_{2}$&order\\
	\midrule
	$10$&$2.2981\times 10^{-2}$&    &$9.2168\times 10^{-1}$&    &$3.5313\times 10^{1}$&\\
	$20$&$2.9230\times 10^{-3}$&2.97&$2.3391\times 10^{-1}$&1.98&$1.7926\times 10^{1}$&0.98\\
	$40$&$3.6703\times 10^{-4}$&2.99&$5.8708\times 10^{-2}$&1.99&$8.9983\times 10^{0}$&0.99\\
	$80$&$4.5932\times 10^{-5}$&3.00&$1.4692\times 10^{-2}$&2.00&$4.5036\times 10^{0}$&1.00\\
	$160$&$5.7431\times 10^{-6}$&3.00&$3.6738\times 10^{-3}$&2.00&$2.2524\times 10^{0}$&1.00\\
	\bottomrule
\end{tabular}
}
	\caption{Interpolation error of immersed $\mathcal{P}_2$ element of line interface when $\beta^+=100$, $\beta^-=1$}
	\label{tab:line_itp_p2}
\end{table}

\begin{table}[htbp]
\centering
\resizebox{.8\textwidth}{!}{
\begin{tabular}{ccccccc}
	\toprule
	$N$ & $||u_h-u||_{0}$ & order&$|u_h-u|_{1}$&order&$|u_h-u|_{2}$&order\\
	\midrule
	$10$&$1.1367\times 10^{-1}$&    &$1.0854\times 10^{0}$&    &$3.5853\times 10^{1}$&\\
	$20$&$2.7870\times 10^{-2}$&2.03&$2.6715\times 10^{-1}$&2.02&$1.8030\times 10^{1}$&0.99\\
	$40$&$7.2460\times 10^{-3}$&1.94&$6.7052\times 10^{-2}$&1.99&$9.0140\times 10^{0}$&1.00\\
	$80$&$1.8531\times 10^{-3}$&1.97&$1.6832\times 10^{-2}$&1.99&$4.5059\times 10^{0}$&1.00\\
	$160$&$4.6735\times 10^{-4}$&1.99&$4.2162\times 10^{-3}$&2.00&$2.2527\times 10^{0}$&1.00\\
	\bottomrule
\end{tabular}
}
	\caption{Numerical solution errors of $\mathcal{P}_2$ element of line interface when $\beta^+=100$, $\beta^-=1$}
	\label{tab:line_sol_p2}
\end{table}

\begin{table}[htbp]
\centering
\resizebox{.8\textwidth}{!}{
\begin{tabular}{ccccccc}
	\toprule
	$N$ & $||\mathcal{I}^p_hu-u||_{0}$ & order&$|\mathcal{I}^p_hu-u|_{1}$&order&$|\mathcal{I}^p_hu-u|_{2}$&order\\
	\midrule
	$10$&$1.6860\times 10^{-3}$&    &$1.0195\times 10^{-1}$&    &$6.4074\times 10^{0}$&\\
	$20$&$1.0708\times 10^{-4}$&3.98&$1.2934\times 10^{-2}$&2.98&$1.6289\times 10^{0}$&1.98\\
	$40$&$6.7182\times 10^{-6}$&3.99&$1.6225\times 10^{-3}$&2.99&$4.0884\times 10^{-1}$&1.99\\
	$60$&$1.3280\times 10^{-6}$&4.00&$4.8104\times 10^{-4}$&3.00&$1.8184\times 10^{-1}$&2.00\\
	$80$&$4.2028\times 10^{-7}$&4.00&$2.0298\times 10^{-4}$&3.00&$1.0231\times 10^{-1}$&2.00\\
	$100$&$1.7218\times 10^{-7}$&4.00&$1.0394\times 10^{-4}$&3.00&$6.5485\times 10^{-2}$&2.00\\
	\bottomrule
\end{tabular}
}
	\caption{Interpolation error of immersed $\mathcal{P}_3$ element of line interface when $\beta^+=100$, $\beta^-=1$}
	\label{tab:line_itp_p3}
\end{table}

\begin{table}[htbp]
\centering
\resizebox{.8\textwidth}{!}{
\begin{tabular}{ccccccc}
	\toprule
	$N$ & $||u_h-u||_{0}$ & order&$|u_h-u|_{1}$&order&$|u_h-u|_{2}$&order\\
	\midrule
	$10$&$6.4599\times 10^{-3}$&    &$1.3698\times 10^{-1}$&    &$6.0325\times 10^{0}$&\\
	$20$&$4.1086\times 10^{-4}$&3.97&$1.7319\times 10^{-2}$&2.98&$1.5031\times 10^{0}$&2.00\\
	$40$&$2.7180\times 10^{-5}$&3.92&$2.1956\times 10^{-3}$&2.98&$3.7432\times 10^{-1}$&2.01\\
	$60$&$5.4620\times 10^{-6}$&3.96&$6.5410\times 10^{-4}$&2.99&$1.6610\times 10^{-1}$&2.00\\
	$80$&$1.7460\times 10^{-6}$&3.96&$2.7676\times 10^{-4}$&2.99&$9.3358\times 10^{-2}$&2.00\\
	$100$&$7.1692\times 10^{-7}$&3.99&$1.4196\times 10^{-4}$&2.99&$5.9720\times 10^{-2}$&2.00\\
	\bottomrule
\end{tabular}
}
	\caption{Numerical solution errors of $\mathcal{P}_3$ element of line interface when $\beta^+=100$, $\beta^-=1$}
	\label{tab:line_sol_p3}
\end{table}

To further test the consistency with respect to the interface location, we consider a vertical interface aligned with $y$-axis. The new line interface can be written as
\[\Gamma=\{(x,y):x-c=0\}.\] 
The exact solution is 
\begin{equation}
	\label{equ:lineex2}
	u(x, y)=\left\{\begin{array}{ll}
	\frac{1}{\beta^-}{(x-c)^2 \sin^2(\pi y)}, & x\leq c \\
	\frac{1}{\beta^+}{(x-c)^2 \sin^2(\pi y)}, & x>c
	\end{array}\right.
\end{equation}

In this experiment, we set $(\beta^-,\beta^+)=(1,10)$ and fix the mesh size $h=2/40=0.05$. The parameter $c$ varies across the interval $[0.71,0.79]$, so that the interface $\{x=c\}$ moves through grid columns. When $c=0.75$, the interface align exactly with the mesh, and the immersed $C^0$ interior penalty method reduces to a standard $C^0$ interior penalty method. Figure \ref{fig:moving} reports the error in three norms as the interface moves. The results demonstrates that the errors evolve smoothly with respect to the interface location, including at the degenerated case when the interface coincides with the mesh. This confirms the robustness and consistency of our method under interface movement.

\begin{figure}[htb]
 \centering
 \includegraphics[width=.7\textwidth]{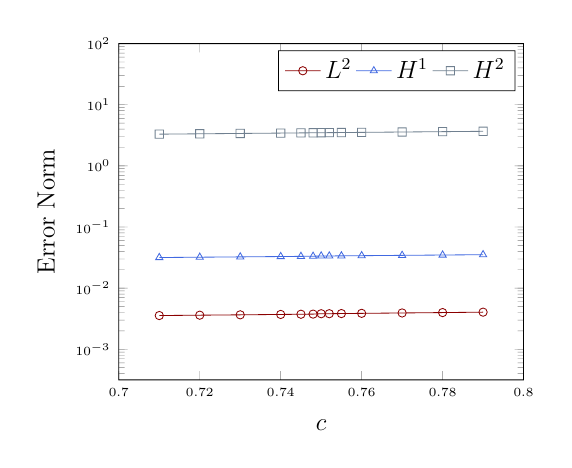}
 \caption{Errors of an moving line with respect to position}
 \label{fig:moving}
\end{figure}

\subsection{Parabola Interface Case}
\label{sec:parabola}
In this example, we test our method on a curved interface defined by a parabola. The interface is given by
\[\Gamma=\{(x,y)\in \Omega :y-(x^2+2x+c)=0\}\]
which divides the domain into $\Omega^+ = \{(x,y)\in \Omega :y-(x^2+2x+c)>0\}$ and $\Omega^- = \{(x,y)\in \Omega :y-(x^2+2x+c)<0\}$. We set $c=-\frac{\sqrt{2}}{2}$ in the experiment, see the middle plot of Figure \ref{fig:interface} for reference. The exact solution is defined by
\begin{equation}
	\label{equ:paraexample}
	u(x, y)=\left\{\begin{array}{ll}
	\frac{1}{\beta^{-}}{\left(x^{2}+2x+c-y\right)^{2}\left(1-y^2\right)^2}, & x \in \Omega^{-}, \\
	\frac{1}{\beta^{+}}{\left(x^{2}+2x+c-y\right)^{2}\left(1-y^2\right)^2}, & x \in \Omega^{+}.
	\end{array}\right.
\end{equation}

Tables \ref{tab:para_itp_p2} and \ref{tab:para_itp_p3} report interpolation errors of the immersed $\mathcal{P}_2$ and $\mathcal{P}_3$ spaces for the case $\beta^+=10$ and $\beta^-=1$. In both cases, we observe optimal convergence rates consistent with theoretical expectations.

\begin{table}[htbp]
\centering
\resizebox{.8\textwidth}{!}{
\begin{tabular}{ccccccc}
	\toprule
	$N$ & $||\mathcal{I}^p_hu-u||_{0}$ & order&$|\mathcal{I}^p_hu-u|_{1}$&order&$|\mathcal{I}^p_hu-u|_{2}$&order\\
	\midrule
	$10$&$5.8973\times 10^{-3}$&    &$2.5587\times 10^{-1}$&    &$8.2234\times 10^{0}$&\\
	$20$&$7.5802\times 10^{-4}$&2.96&$6.6025\times 10^{-2}$&1.95&$4.2167\times 10^{0}$&0.96\\
	$40$&$9.5420\times 10^{-5}$&2.99&$1.6647\times 10^{-2}$&1.99&$2.1224\times 10^{0}$&0.99\\
	$80$&$1.1949\times 10^{-5}$&3.00&$4.1718\times 10^{-3}$&2.00&$1.0631\times 10^{0}$&1.00\\
	$160$&$1.4944\times 10^{-6}$&3.00&$1.0437\times 10^{-3}$&2.00&$5.3187\times 10^{-1}$&1.00\\
	\bottomrule
\end{tabular}
}
	\caption{Interpolation error of immersed $\mathcal{P}_2$ element of parabola interface when when $\beta^+=10$, $\beta^-=1$}
	\label{tab:para_itp_p2}
\end{table}


\begin{table}[htb]
\centering
\resizebox{.8\textwidth}{!}{
\begin{tabular}{ccccccc}
	\toprule
	$N$ & $||\mathcal{I}^p_hu-u||_{0}$ & order&$|\mathcal{I}^p_hu-u|_{1}$&order&$|\mathcal{I}^p_hu-u|_{2}$&order\\
	\midrule
	$10$&$3.9532\times 10^{-4}$&    &$2.5529\times 10^{-2}$&    &$1.3731\times 10^{0}$&\\
	$20$&$2.5089\times 10^{-5}$&3.98&$3.2559\times 10^{-3}$&2.97&$3.4956\times 10^{-1}$&1.97\\
	$40$&$1.5807\times 10^{-6}$&3.99&$4.1044\times 10^{-4}$&2.99&$8.8171\times 10^{-2}$&1.99\\
	$60$&$3.1153\times 10^{-7}$&4.01&$1.2152\times 10^{-4}$&3.00&$3.9124\times 10^{-2}$&2.00\\
	$80$&$9.8835\times 10^{-8}$&3.99&$5.1370\times 10^{-5}$&2.99&$2.2057\times 10^{-2}$&1.99\\
	$100$&$4.0438\times 10^{-8}$&4.00&$2.6281\times 10^{-5}$&3.00&$1.4101\times 10^{-2}$&2.00\\
	\bottomrule
\end{tabular}
}
	\caption{Interpolation error of immersed $\mathcal{P}_3$ element of parabola interface when when $\beta^+=10$, $\beta^-=1$}
	\label{tab:para_itp_p3}
\end{table}


Figure \ref{fig:num_para} summarizes numerical solution errors for the parabola case with fixed $\beta^-=1$ and varying $\beta^+$. When $\beta^+ =1$, the interface disappears and the method reduces to the standard $C^0$ interior penalty method. In all cases, the proposed method achieves expected rates of convergence for both $\mathcal{P}_2$ and $\mathcal{P}_3$ spaces.
The results also indicates mild variations in error magnitudes across different parameter values., but the convergence remains robust. 
%

\begin{figure}[htb]
 \centering
 \includegraphics[width=.99\textwidth]{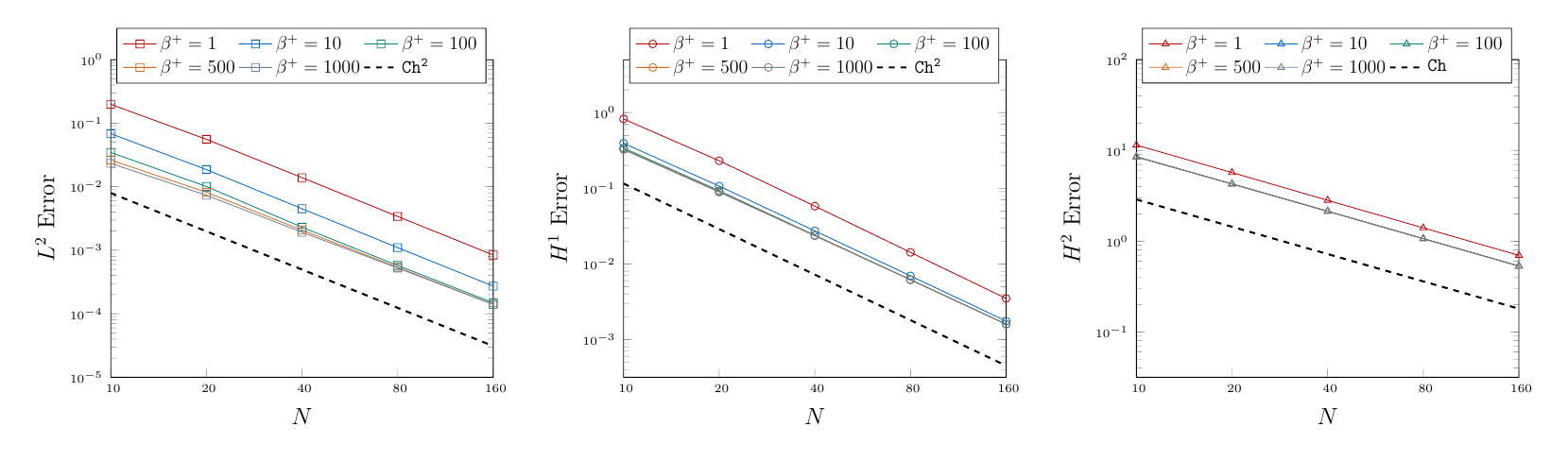}
 \includegraphics[width=.99\textwidth]{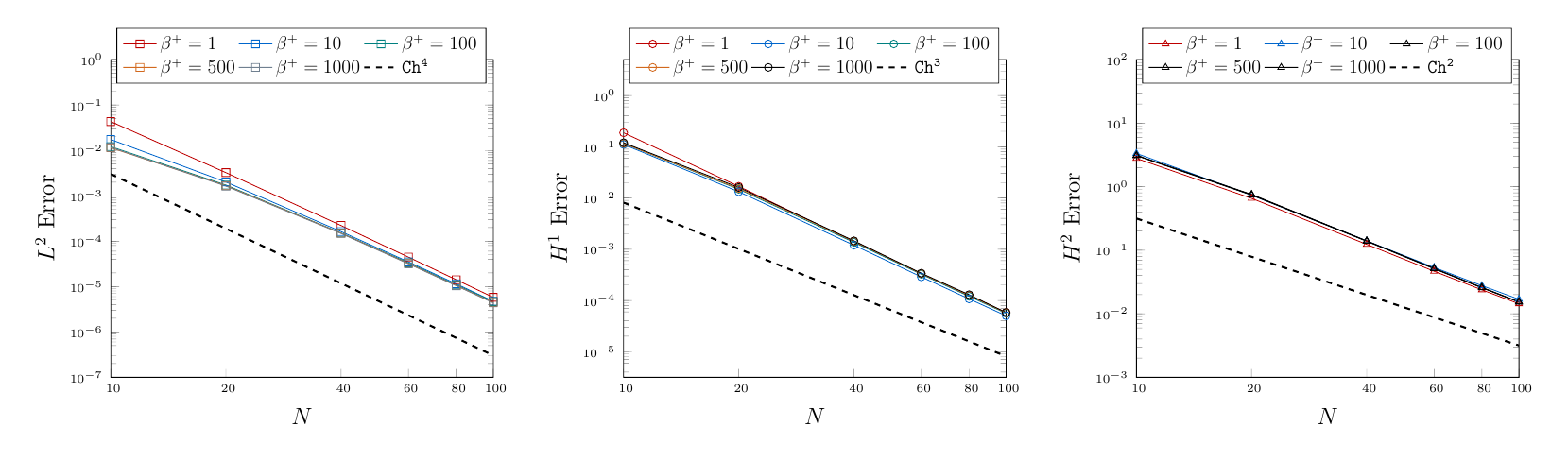}
 \caption{Numerical solution errors of $\mathcal{P}_2$ (first row) and $\mathcal{P}_3$ (second row)  for parabola interface case}
 \label{fig:num_para}
\end{figure}

\subsection{Circular Interface Case}
\label{sec:circle}
In this example, we consider a circular interface. The interface is the level-set function
\[\Gamma=\{(x,y)\in\Omega:x^2+y^2=r_0^2\}.\] 
which separates the domain into 
$\Omega^+=\{(x,y)\in\Omega:x^2+y^2>r_0^2\}$ and $\Omega^-=\{(x,y)\in\Omega:x^2+y^2<r_0^2\}$. The exact solution is 
\begin{equation}
	\label{equ:cirexample}
	u(x, y)=\left\{\begin{array}{ll}
	\frac{\left(x^{2}+y^{2}-r_{0}^{2}\right)^{2}\sin^2(\pi y)}{\beta^{-}}, & x \in \Omega^{-} \\
	\frac{\left(x^{2}+y^{2}-r_{0}^{2}\right)^{2}\sin^2(\pi y)}{\beta^{+}}, & x \in \Omega^{+}
	\end{array}\right.
\end{equation} 
We take $r_0=\pi/6.28$ in our tests, see the right plot of Figure \ref{fig:interface} for reference.

Tables \ref{tab:circle_itp_p2} and \ref{tab:circle_itp_p3} report interpolation errors for $\beta^+=1$ and $\beta^-=50$. As in previous examples, the immersed $\mathcal{P}_2$ and $\mathcal{P}_3$ spaces exhibit optimal approximation in $\|\cdot\|_0$, $|\cdot|_1$ and $|\cdot|_2$ norms. Tables \ref{tab:circle_sol_p2} and \ref{tab:circle_sol_p3} present the numerical solution errors for the proposed scheme. 
 
\begin{table}[htb]
\centering
\resizebox{.8\textwidth}{!}{
\begin{tabular}{ccccccc}
	\toprule
	$N$ & $||\mathcal{I}^p_hu-u||_{0}$ & order&$|\mathcal{I}^p_hu-u|_{1}$&order&$|\mathcal{I}^p_hu-u|_{2}$&order\\
	\midrule
	$10$&$7.5156\times 10^{-3}$&    &$3.0133\times 10^{-1}$&    &$1.1212\times 10^{1}$&\\
	$20$&$9.8140\times 10^{-4}$&2.94&$7.8906\times 10^{-2}$&1.93&$5.8092\times 10^{0}$&0.95\\
	$40$&$1.2383\times 10^{-4}$&2.99&$1.9919\times 10^{-2}$&1.99&$2.9275\times 10^{0}$&0.99\\
	$80$&$1.5514\times 10^{-5}$&3.00&$4.9916\times 10^{-3}$&2.00&$1.4666\times 10^{0}$&1.00\\
	$160$&$1.9404\times 10^{-6}$&3.00&$1.2486\times 10^{-3}$&2.00&$7.3364\times 10^{-1}$&1.00\\
	\bottomrule
\end{tabular}
}
	\caption{Interpolation error of immersed $\mathcal{P}_2$ element of circular interface when when $\beta^+=1$, $\beta^-=50$}
	\label{tab:circle_itp_p2}
\end{table}

\begin{table}[htb]
\centering
\resizebox{.8\textwidth}{!}{
\begin{tabular}{ccccccc}
	\toprule
	$N$ & $||u_h-u||_{0}$ & order&$|u_h-u|_{1}$&order&$|u_h-u|_{2}$&order\\
	\midrule
	$10$&$3.7308\times 10^{-2}$&    &$4.3174\times 10^{-1}$&    &$1.1364\times 10^{1}$&\\
	$20$&$1.0019\times 10^{-2}$&1.90&$1.2319\times 10^{-1}$&1.81&$5.8557\times 10^{0}$&0.96\\
	$40$&$2.9634\times 10^{-3}$&1.76&$3.4468\times 10^{-2}$&1.84&$2.9410\times 10^{0}$&0.99\\
	$80$&$8.2261\times 10^{-4}$&1.85&$9.1887\times 10^{-3}$&1.91&$1.4702\times 10^{0}$&1.00\\
	$160$&$2.1543\times 10^{-4}$&1.93&$2.3678\times 10^{-3}$&1.96&$7.3458\times 10^{-1}$&1.00\\
	\bottomrule
\end{tabular}
}
	\caption{Numerical solution errors of $\mathcal{P}_2$ element of circular interface when $\beta^+=1$, $\beta^-=50$}
	\label{tab:circle_sol_p2}
\end{table}

\begin{table}[htb]
\centering
\resizebox{.8\textwidth}{!}{
\begin{tabular}{ccccccc}
	\toprule
	$N$ & $||\mathcal{I}^p_hu-u||_{0}$ & order&$|\mathcal{I}^p_hu-u|_{1}$&order&$|\mathcal{I}^p_hu-u|_{2}$&order\\
	\midrule
	$10$&$7.0400\times 10^{-4}$&    &$4.3002\times 10^{-2}$&    &$2.5482\times 10^{0}$&\\
	$20$&$4.3799\times 10^{-5}$&4.01&$5.3550\times 10^{-3}$&3.01&$6.3730\times 10^{-1}$&2.00\\
	$40$&$2.7397\times 10^{-6}$&4.00&$6.6977\times 10^{-4}$&3.00&$1.5955\times 10^{-1}$&2.00\\
	$60$&$5.4131\times 10^{-7}$&4.00&$1.9849\times 10^{-4}$&3.00&$7.0938\times 10^{-2}$&2.00\\
	$80$&$1.7129\times 10^{-7}$&4.00&$8.3747\times 10^{-5}$&3.00&$3.9910\times 10^{-2}$&2.00\\
	$100$&$7.0169\times 10^{-8}$&4.00&$4.2880\times 10^{-5}$&3.00&$2.5545\times 10^{-2}$&2.00\\
	\bottomrule
\end{tabular}
}
	\caption{Interpolation error of immersed $\mathcal{P}_3$ element of circular interface when when $\beta^+=1$, $\beta^-=50$}
	\label{tab:circle_itp_p3}
\end{table}

\begin{table}[htb]
\centering
\resizebox{.8\textwidth}{!}{
\begin{tabular}{ccccccc}
	\toprule
	$N$ & $||u_h-u||_{0}$ & order&$|u_h-u|_{1}$&order&$|u_h-u|_{2}$&order\\
	\midrule
	$10$&$2.9512\times 10^{-2}$&    &$1.6345\times 10^{-1}$&    &$4.2294\times 10^{0}$&\\
	$20$&$4.7377\times 10^{-3}$&2.64&$2.2606\times 10^{-2}$&2.85&$1.0358\times 10^{0}$&2.03\\
	$40$&$2.0030\times 10^{-4}$&4.56&$2.0387\times 10^{-3}$&3.47&$2.1701\times 10^{-1}$&2.25\\
	$60$&$4.3622\times 10^{-5}$&3.76&$5.0789\times 10^{-4}$&3.43&$8.4434\times 10^{-2}$&2.33\\
	$80$&$1.4458\times 10^{-5}$&3.84&$1.8888\times 10^{-4}$&3.44&$4.3573\times 10^{-2}$&2.30\\
	$100$&$6.0724\times 10^{-6}$&3.89&$8.8723\times 10^{-5}$&3.39&$2.6391\times 10^{-2}$&2.25\\
	\bottomrule
\end{tabular}
}
	\caption{Numerical solution errors of $\mathcal{P}_3$ element of circular interface when $\beta^+=1$, $\beta^-=50$}
	\label{tab:circle_sol_p3}
\end{table}

Finally, we examine the conditioning of the stiffness matrix associated with the scheme \ref{equ:scheme}. 
Figure  \ref{fig:cond} shows condition number versus $N$ for fixed $\beta^-=1$ and varying $\beta^+$. For the $\mathcal{P}_2$ space (left), the condition number scales like $\mathcal{O}(h^{-4})$; moreover, the magnitude is higher for large coefficient jumps than for moderate ones. Similar behavior is observed for the immersed $\mathcal{P}_3$ space (right). These results indicate regular conditioning of the proposed method. 

\begin{figure}[htb]
 \centering
 \includegraphics[width=.99\textwidth]{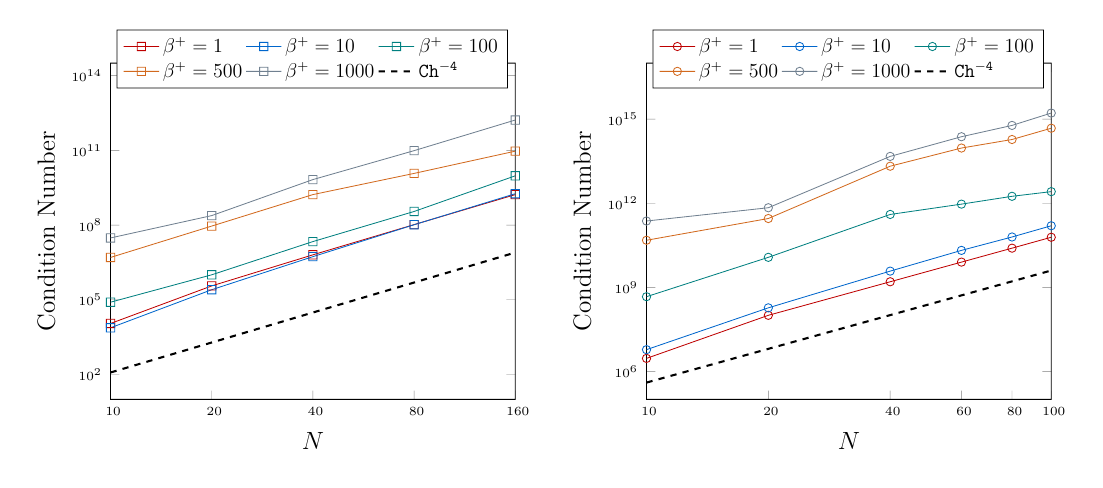}
 \caption{Condition number of $\mathcal{P}_2$ (left) and $\mathcal{P}_3$ (right) IPIFEM for circular interface case}
 \label{fig:cond}
\end{figure}

\subsection{A Flower Interface Case}
\label{sec:flower}
In this final example, we consider a relatively complex flower-shaped interface. The level-set function of the interface is defined as
\[\phi(x,y)=(x^2+y^2)[1+0.6\sin (6 \arctan (y/x))]-\left(1.5\pi/6.28\right)^4,\]
which splits the domain into $\Omega^+=\{(x,y)\in\Omega:\phi(x,y)>0\}$ and $\Omega^-=\{(x,y)\in\Omega:\phi(x,y)<0\}$. 
The shape of this interface is shown in the left plot of Figure \ref{fig:star}. We set the source term $f=0$ and homogenous boundary conditions. 

Since it is difficult to construct an exact solution for this geometry, we adopt a reference solution $\hat {u}$ computed on a very fine mesh ($N=320$) to evaluate errors. To facilitate high-order numerical quadrature on this irregular domain, we employ the technique described in 
\cite{2020CuiLengLiuZhangZheng,2015Saye}. The computed errors for the immersed $\mathcal{P}_2$ $C^0$ interior penalty method are reported in Table \ref{tab:flower_tb1}. The approximate error norms are defined as
\begin{equation}
	||u_h-\hat{u}||_{\tilde{0}} = h\left(\sum_{X\in\mathcal{N}_h}(u_h(X)-\hat{u}(X))^2\right)^{1/2},~~~
	|u_h-\hat{u}|_{\tilde{1}} = ||D_2 u_h-D_2\hat{u}||_{\tilde{0}},
\end{equation}
where $D_2 u$ denotes the second-order central difference approximation of $\nabla u$ on the grid function $u$. The surface plot of the numerical solution is shown in the right plot of Figure   \ref{fig:star}.

%
%

\begin{table}[htb]
\centering
\resizebox{.6\textwidth}{!}{
\begin{tabular}{ccccc}
	\toprule
	$N$ & $||u_h-\hat{u}||_{\tilde{0}}$ & order&$|u_h-\hat{u}|_{\tilde{1}}$&order\\
	\midrule
	$10$&$2.9510\times 10^{-3}$&    &$4.8539\times 10^{-3}$&    \\
	$20$&$1.2720\times 10^{-3}$&1.21&$2.0239\times 10^{-3}$&1.26\\
	$40$&$4.3351\times 10^{-4}$&1.55&$6.7660\times 10^{-4}$&1.58\\
	$80$&$1.0786\times 10^{-4}$&2.01&$1.6779\times 10^{-4}$&2.01\\
	$160$&$2.2620\times 10^{-5}$&2.25&$3.5572\times 10^{-5}$&2.24\\
	\bottomrule
\end{tabular}
}
	\caption{Numerical solution errors of $\mathcal{P}_2$ element of flower interface when $\beta^+=1$, $\beta^-=50$}
	\label{tab:flower_tb1}
\end{table}


\begin{figure}[htb]
 \centering
 \includegraphics[width=.46\textwidth]{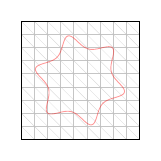}
 \includegraphics[width=.49\textwidth]{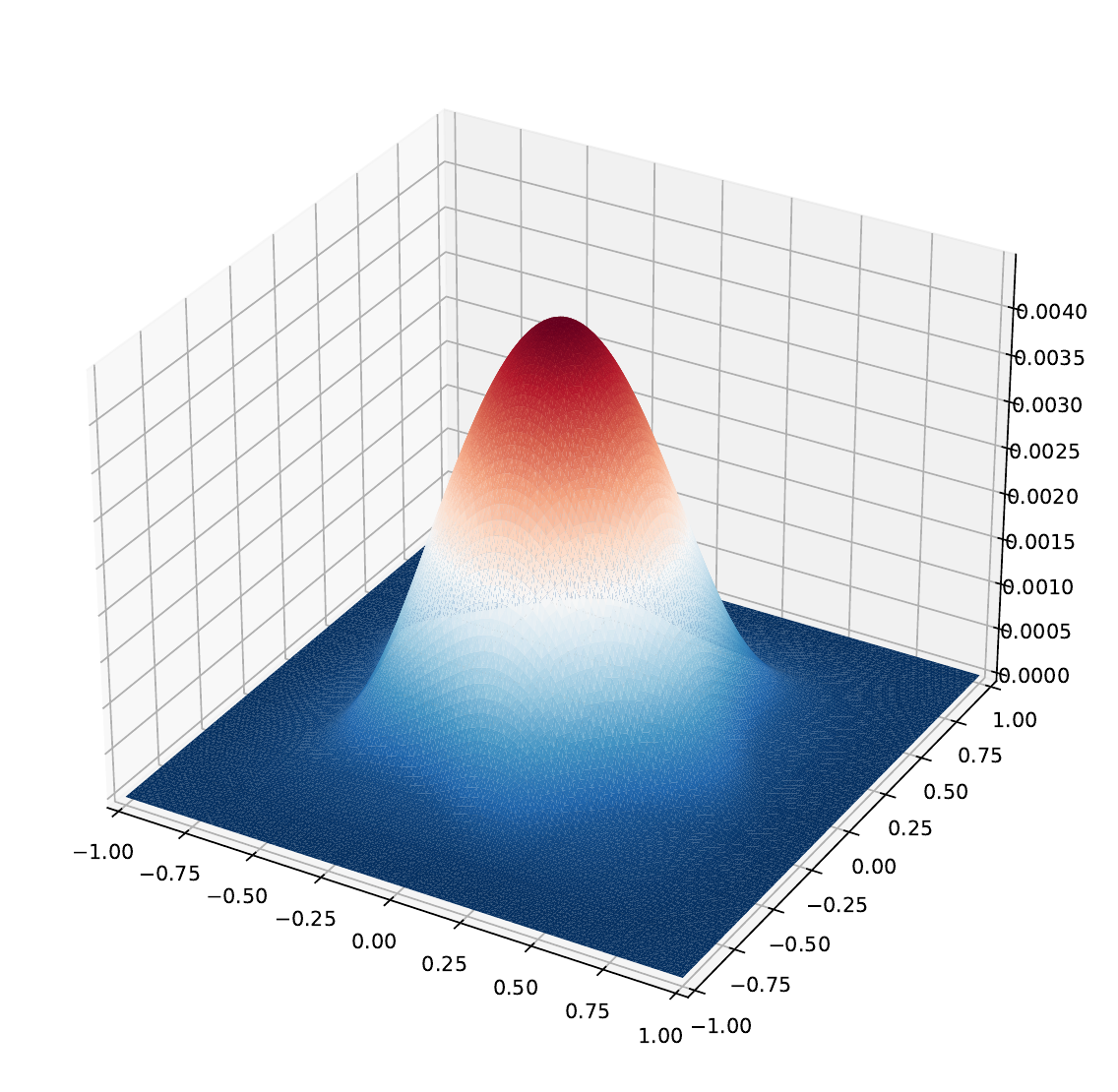}
 \caption{The shape (Left) and surface plot (Right) of the flower interface example}
 \label{fig:star}
\end{figure}

\section{Conclusion}
\label{sec:conclu}
In this paper, we developed a high-order immersed finite element framework for biharmonic interface problems, combining the least-squares enforcement of interface jump conditions with a modified $C^0$ interior penalty formulation. The proposed method achieves optimal convergence as demonstrated by numerical experiments with a range of interface geometries and coefficient contrasts.  Compared with existing approaches, it avoids the complexity of $C^1$ conforming elements and large computational costs of the fully discontinuous formulations, while maintaining flexibility on unfitted meshes. 

\bibliographystyle{plain}
\bibliography{a.bib}

 {
\appendix
\section{Details of Case 1 for Theorem \ref{thm:unisol}}
\label{Case1}
\underline{For $\mathcal{P}_2$ type I element}, we have the following scenarios
\begin{itemize}

	\item When $1/2\leq d<1$ and $1/2\leq e<1$, we have
	\begin{align}
		\det(\mathbf{M})=32 (d^2 + e^2)^2 (P_1+\rho P_2)\neq 0,	
	\end{align}
	where $d$ and $e$ are not equal to $1$ at the same time, and
	\begin{equation}
		\begin{split}
			P_1=&-d^3 (1 - 2 e)^2 e - 2 d^2 (-2 + e) e^3 - e^4 + d e^3 (-1 + 4 e) + d^4 (-1 + 4 e - 2 e^2) \geq P_1(1/2,1/2) \geq 1/8,\\
			P_2=&2 d^4 (-1 + e)^2 + d^3 (1 - 2 e)^2 e + 2 d^2 (-1 + e)^2 e^2 + 2 e^4 +  d (e^3 - 4 e^4) \geq P_2(1,1) \geq 0.
		\end{split}
	\end{equation}

	\item When $0< d\leq 1/2$ and $0<e\leq 1/2$, we have
\begin{align}
	\det(\mathbf{M})=-32 (d^2 + e^2)^2(-2 d^2 e^2 (d + e)^2+\rho P_1)\neq 0,
\end{align}
where
\begin{equation}
\begin{split}
	P_1=4 d^3 e^3 - e^4 + 2 d^2 e^2 (-1 + e^2) + d^4 (-1 + 2 e^2) \leq P_1(0,0) \leq 0.
\end{split}
\end{equation}

	\item When $0< d\leq 1/2$ and $1/2\leq e<1$ (the case when $1/2\leq d<1$ and $0< e\leq 1/2$ is similar), we have
\begin{align}
	\det(\mathbf{M})=-32 (d^2 + e^2)^2(P_1+\rho P_2)\neq 0,
\end{align}
where
\begin{equation}
\begin{split}
	P_1=&-d^2 (-2 e^4 + d (e - 4 e^2) + d^2 (1 - 4 e + 2 e^2)) \geq P_1(0,3/4) \geq 0,\\
	P_2=&2 d^4 (-1 + e)^2 + e^4 + d^3 (e - 4 e^2) - 2 d^2 e^2 (-1 + e^2) \geq P_2(0,1/2) \geq 1/16.
\end{split}
\end{equation}

\end{itemize}
\underline{For $\mathcal{P}_2$ type II element},
\begin{itemize}

	\item  When $1/2\leq d<1$ and $1/2\leq e<1$, we have
\begin{align}
	\det(\mathbf{M})=32 ((e-1)^2+(d-e)^2)^2(P_1+\rho P_2) \neq 0,	
\end{align}
where 
\begin{equation}
\begin{split}
	P_1=&-1 + d^4 + 4 e - 4 d^3 e - 4 e^2 + 2 e^4 - 2 d^2 e (-2 + 2 e - 4 e^2 + e^3)\\
	    &+  4 d (1 - 5 e + 8 e^2 - 6 e^3 + e^4) \geq P_1(1,1) \geq 0,\\
	P_2=& 2 (-1 + d)^2 (-1 + e)^4 \geq 0.
\end{split}
\end{equation}

	\item  When $0< d\leq 1/2$ and $0<e\leq 1/2$, we have
\begin{align}
	\det(\mathbf{M})=32 ((e-1)^2+(d-e)^2)^2(P_1+\rho P_2) \neq 0
\end{align}
where
\begin{equation}
\begin{split}
	P_1=&d^4 + e^3 + d^3 (3 - 11 e + 4 e^2) + d e^2 (1 - 9 e + 4 e^2) + 
 d^2 (2 - 13 e + 31 e^2 - 16 e^3 + 2 e^4) \geq P_1(0,0) \geq 0, \\
	P_2=&(1 - e) (1 - 3 e + 5 e^2 - 4 e^3 + d^3 (-3 + 4 e) + 
     d^2 e (9 - 14 e + 2 e^2) + d e (-4 + 3 e + 4 e^2)) \\
     \geq & P_2(1/2,1/2) \geq 1/32.
\end{split}
\end{equation}

	\item When $0< d\leq 1/2$ and $1/2\leq e<1$, we have
\begin{align}
	\det(\mathbf{M})=-32 ((e-1)^2+(d-e)^2)^2(P_1+\rho P_2) \neq 0
\end{align}
where
\begin{equation}
\begin{split}
	P_1=&d^4 - 4 d^3 (-1 + 4 e - 3 e^2 + e^3) + 2 d^2 e (-2 + 10 e - 8 e^2 + 3 e^3) + e (-1 + 5 e - 7 e^2 + 4 e^3) \\
	    &- d (-1 + 3 e + e^2 - 3 e^3 + 4 e^4) \geq P_1(1/2,0.696565) \geq  0.0128466, \\
	P_2=&(-1 + e)^3 (-1 + d + 4 d^3 + 4 d e - 2 d^2 (1 + 3 e)) \geq P_2(0,1) \geq 0.
\end{split}
\end{equation}

	\item When $1/2\leq d<1$ and $0< e\leq 1/2$, we have
\begin{align}
	\det(\mathbf{M})=-32 ((e-1)^2+(d-e)^2)^2(P_1+\rho P_2) \neq 0,
\end{align}
where
\begin{equation}
\begin{split}
	P_1=&-1 + 3 d + 2 d^2 - d^3 + d^4 + 2 e - 17 d e - 5 d^2 e + d^3 e - 2 e^2 + 34 d e^2 + 7 d^2 e^2 - 8 d^3 e^2 - 36 d e^3  \\
	    &+ 8 d^2 e^3 + 4 d^3 e^3 + 12 d e^4 - 6 d^2 e^4 + 3 e (1 - 2 e + 2 e^2) - e^2 (1 - 2 e + 2 e^2) \geq P_1(1/2,1/2) \geq  1/32, \\
	P_2=&-(-1 + d)^2 (-1 + e) (2 + d (1 - 2 e)^2 - 7 e + 10 e^2 - 6 e^3) \geq P_2(1,1/4) \geq 0.
\end{split}
\end{equation}

\end{itemize}
\underline{For $\mathcal{P}_3$ type I element},
\begin{itemize}

	\item When $0\leq d<1/3$ and $0\leq e<1/3$, we have
\begin{align}
	\det(\mathbf{M})=-612220032 (d^2 + e^2)^6 \rho ^2(P_1+\rho P_2)\neq 0,	
\end{align}
where $d$ and $e$ are not equal to $1$ at the same time and 
\begin{equation}
\begin{split}
	P_1=&9 d^2 e^2 (-d^6 (-1 + e) + d^5 (2 - 3 e) e + e^4 (-2 + e^2) + d^4 (-2 + e + 3 e^2 - 4 e^3) \\
		&+ d^3 e (-4 + 7 e + 4 e^2 - 4 e^3) + d^2 e^2 (-4 + 7 e + 3 e^2 - 3 e^3) + d e^3 (-4 + e + 2 e^2 - e^3))\\
		 \leq & P_1(0,1/6) \leq 0,\\
	P_2=&9 d^7 e^3 (-2 + 3 e) + e^6 (-2 + e^2) + 9 d^3 e^5 (4 - e - 2 e^2 + e^3) + 9 d^5 e^3 (4 - 7 e - 4 e^2 + 4 e^3) \\
	    &+ d^8 (1 - 9 e^2 + 9 e^3) + 3 d^4 e^2 (-2 + 14 e^2 - 21 e^3 - 9 e^4 + 9 e^5) + d^6 (-2 + 22 e^2 - 9 e^3 - 27 e^4 + 36 e^5) \\
	    &+ d^2 (-6 e^4 + 22 e^6 - 9 e^8) \leq P_1(0,0) \leq 0.
\end{split}
\end{equation}

	\item When $0\leq d<1/3$ and $1/3\leq e<2/3$, we have
\begin{align}
	\det(\mathbf{M})=51018336 (d^2 + e^2)^4 \rho  (P_1+\rho P_2+\rho ^2P_3) \neq 0
\end{align}
where $P_1\leq P_1(0,1/2)\leq 0$, $P_2\leq P_2(0,1/2)\leq 0$, $P_3 \leq P_3(0,1/3)\leq -68/177147$.

	\item When $0\leq d<1/3$ and $2/3\leq e<1$, we have
\begin{align}
	\det(\mathbf{M})=-51018336 (d^2 + e^2)^4 \rho  (P_1+\rho P_2+\rho ^2P_3) \neq 0
\end{align}
where $P_1\leq P_1(0,5/6)\leq 0$, $P_2\leq P_2(0,5/6)\leq 0$, $P_3 \leq P_3(1/3,2/3)\leq -29240/177147$.

	\item When $1/3\leq d<2/3$ and $1/3\leq e<2/3$, we have
\begin{align}
	\det(\mathbf{M})=25509168 (d^2 + e^2)^3 (P_1+\rho P_2+\rho ^2P_3+\rho ^3P_4) \neq 0
\end{align}
where $P_1\geq P_1(1/3,1/2)=0$, $P_2\geq P_2(1/3,1/3)=0$, $P_3\geq P_3(1/3,1/3)=16384/4782969$, $P_4\geq P_4(2/3,2/3)=0$.

	\item When $1/3\leq d<2/3$ and $2/3\leq e<1$ and $O$ belongs to $T^+$, we have
\begin{align}
	\det(\mathbf{M})=-25509168 ((d^2 + e^2)^3) (P_1+\rho P_2+\rho ^2P_3+\rho ^3P_4) \neq 0
\end{align}
where $P_1$, $P_2$, $P_3$ and $P_4$ are no less than $0$.

	\item When $1/3\leq d<2/3$ and $2/3\leq e<1$ and $O$ belongs to $T^-$, we have
\begin{align}
	\det(\mathbf{M})=-25509168 (d^2 + e^2)^3 (P_1+\rho P_2+\rho ^2P_3+\rho ^3P_4) \neq 0
\end{align}
where $P_1$, $P_2$, $P_3$ and $P_4$ are no more than $0$.

	\item When $2/3\leq d<1$ and $2/3\leq e<1$, we have
\begin{align}
	\det(\mathbf{M})=102036672 (d^2 + e^2)^4 (P_1+\rho P_2+\rho ^2P_3+\rho ^3P_4) \neq 0
\end{align}
where $P_1$, $P_2$, $P_3$, and $P_4$ are no more than $0$.

\end{itemize}
\underline{For $\mathcal{P}_3$ type II element},

\begin{itemize}

	\item When $2/3\leq d<1$ and $2/3\leq e<1$, we have
\begin{align}
	\det(\mathbf{M})=-612220032 ((d-e)^2+(e-1)^2)^6(P_1+\rho P_2) \neq 0
\end{align}
where $P_1$ and $P_2$ are no more than $0$.

	\item When $2/3\leq d<1$ and $1/3\leq e<2/3$, we have
\begin{align}
	\det(\mathbf{M})=51018336 ((d-e)^2+(e-1)^2)^4(P_1+\rho P_2+\rho ^2P_3) \neq 0
\end{align}
where $P_1$, $P_2$, and $P_3$ are no more than $0$.

	\item When $2/3\leq d<1$ and $0\leq e<1/3$, we have
\begin{align}
	\det(\mathbf{M})=-51018336 ((d-e)^2+(e-1)^2)^4(P_1+\rho P_2+\rho ^2P_3) \neq 0
\end{align}
where $P_1$, $P_2$, and $P_3$ are no more than $0$.

	\item When $1/3\leq d<2/3$ and $1/3\leq e<2/3$, we have
\begin{align}
	\det(\mathbf{M})=-25509168 ((d-e)^2+(e-1)^2)^3(P_1+\rho P_2+\rho ^2P_3) \neq 0
\end{align}
where $P_1$, $P_2$, and $P_3$ are no more than $0$.

	\item When $1/3\leq d<2/3$ and $0\leq e<1/3$ and $O$ belongs to $T^-$, we have
\begin{align}
	\det(\mathbf{M})=25509168 ((d-e)^2+(e-1)^2)^3(P_1+\rho P_2+\rho ^2P_3+\rho ^3P_4) \neq 0
\end{align}
where $P_1$, $P_2$, $P_3$, and $P_4$ are no more than $0$.

	\item When $1/3\leq d<2/3$ and $0\leq e<1/3$ and $O$ belongs to $T^+$, we have
\begin{align}
	\det(\mathbf{M})=25509168 ((d-e)^2+(e-1)^2)^3(P_1+\rho P_2+\rho ^2P_3+\rho ^3P_4) \neq 0
\end{align}
where $P_1$, $P_2$, $P_3$, and $P_4$ are no less than $0$.

	\item When $0\leq d<1/3$ and $0\leq e<1/3$, we have
\begin{align}
	\det(\mathbf{M})=102036672 ((d-e)^2+(e-1)^2)^4\rho(P_1+\rho P_2+\rho ^2P_3) \neq 0
\end{align}
where $P_1$, $P_2$, and $P_3$ are no more  than $0$.

\end{itemize}

}

\end{document}